\numberwithin{equation}{section}
\numberwithin{figure}{section}
  \theoremstyle{remark}
  \newtheorem*{acknowledgement*}{\protect\acknowledgementname}
\theoremstyle{plain}
\newtheorem{thm}{\protect\theoremname}[section]
  \theoremstyle{definition}
  \newtheorem{defn}[thm]{\protect\definitionname}
  \theoremstyle{remark}
  \newtheorem{rem}[thm]{\protect\remarkname}
  \theoremstyle{plain}
  \newtheorem{prop}[thm]{\protect\propositionname}
  \theoremstyle{plain}
  \newtheorem{lem}[thm]{\protect\lemmaname}
  \theoremstyle{plain}
  \newtheorem{cor}[thm]{\protect\corollaryname}
  \theoremstyle{plain}
  \newtheorem*{thm*}{\protect\theoremname}
\numberwithin{equation}{section}
\numberwithin{figure}{section}
 \let\footnote=\endnote
\theoremstyle{definition}
\newtheorem{thmx}{Theorem}
\def\d{\delta}
\def\k{\kappa}
\def\G{\Gamma}
\def\g{\gamma}
\def\w{\omega}
\def\dd{\mathrm{d}}
\def\D{\mathbb{D}}
\def\R{\mathbb{R}}
\def\N{\mathbb{N}}
\def\ep{\varepsilon}
\def\vphi{\varphi}
\def\cal{\mathcal}
\def\F{\mathcal{F}}
\def\lt{\lambda_{T}}
\def\gt{G_T}
\address{Department of Mathematics,
The Ohio State University, Columbus, OH 43210, USA} 
\email{chen.8022@osu.edu}
\address{Department of Mathematics, The University of Chicago, Chicago, IL 60637, USA} 
\email{lkao@math.uchicago.edu}
\address{Department of Mathematics, The University of Chicago, Chicago, IL 60637, USA} 
\email{kihopark@math.uchicago.edu}
\keywords{}
\subjclass[2000]{}
\thanks{This material is based upon work supported by the National Science Foundation (NSF) under Grant Number DMS 1641020. The second author gratefully acknowledges support from the NSF Postdoctoral Research Fellowship under Grant Number DMS 1703554.}
\def\G{\Gamma}
\def\g{\gamma}
\def\D{\Delta}
\def\R{\mathbb{R}}
\def\ep{\varepsilon}
\def\F{\mathcal{F}}
\def\lt{\lambda_{T}}
\def\gt{\cal{G}_T}
\def\Sing{\mathrm{Sing}}
\def\Reg{\mathrm{Reg}}
\def\vp{\varphi}
  \providecommand{\acknowledgementname}{Acknowledgement}
  \providecommand{\corollaryname}{Corollary}
  \providecommand{\definitionname}{Definition}
  \providecommand{\lemmaname}{Lemma}
  \providecommand{\propositionname}{Proposition}
  \providecommand{\remarkname}{Remark}
  \providecommand{\theoremname}{Theorem}
\providecommand{\theoremname}{Theorem}
\begin{document}

\title[Equilibriums for Geodesic Flows over Surfaces without Focal Points]{Unique Equilibrium States for Geodesic Flows over Surfaces without
Focal Points}

\author{Dong Chen, Lien-Yung Kao, and Kiho Park}

\date{\today}

\maketitle

\begin{abstract}
In this paper, we study dynamics of geodesic flows over closed surfaces
of genus greater than or equal to 2 without focal points. Especially,
we prove that there is a large class of potentials having unique equilibrium
states, including scalar multiples of the geometric potential, provided
the scalar is less than 1. Moreover, we discuss ergodic properties
of these unique equilibrium states. We show these unique equilibrium
states are Bernoulli, and weighted regular periodic orbits are equidistributed
relative to these unique equilibrium states.
\end{abstract}

{   \hypersetup{linkcolor=black}   \tableofcontents }

\begingroup % start a TeX group 
\color{black}% or whatever color you wish to use 

\endgroup   % end of TeX group

\section{Introduction}

This paper is devoted to the study of dynamics of the geodesic flows
over closed surfaces without focal points. We focus on the thermodynamic
formalism of the geodesic flows, especially, the uniqueness of the
equilibrium states and their ergodic properties. For uniformly hyperbolic
flows, also known as Anosov flows, thanks to fundamental works of
Ruelle, Bowen, and Ratner, we know that every H\"{o}lder potential
has a unique equilibrium state which enjoys several ergodic features
such as Bernoulli and equidistribution properties. It is also well-known
that the geodesic flow on a negatively curved manifold is uniformly
hyperbolic. However, when the manifold contains subsets with zero
or positive curvature, the geodesic flow becomes non-uniformly hyperbolic.
The non-uniform hyperbolicity greatly increases the difficulty in
understanding the thermodynamics of these flows. Nevertheless, the
geometric features of surfaces without focal points allows us to investigate
the dynamics of the geodesic flows. There are several geometric properties
are available in this setting such as the flat strip theorem, $C^{2}$-regularity
of the horocycles, and more. These properties enable us to derive
and extend the existence of unique of measure of maximal entropy \cite{Knieper:1998ht}
and the equilibrium state \cite{BCFT2017} on closed rank 1 nonpositively
curved manifolds to closed surfaces without focal points and genus
at least 2.

Combining the dynamical and geometric features of surfaces without
focal points, in this paper, we are able to prove the uniqueness of
equilibrium states for a large class of potentials and Bernoulli and
equidistribution properties for such equilibrium states. These results
also generlize Gelfert-Ruggiero's recent work \cite{Gelfert:2017tx}
on the uniqueness of measure of the maximum entropy for the geodesics
flows over surfaces without focal points.

Putting our results in contexts below, we shall first introduce relevant
terminologies briefly (please see Section \ref{sec:Preliminaries-of-dynamics}
and \ref{sec:Preliminaries-of-geometry} for more details). Throughout
the paper, $S$ denotes a closed (i.e., compact without boundary)
$C^{\infty}$ Riemannian surface of genus greater than or equal to
2 without focal points. We denote the geodesics flow on the unit tangent
bundle $T^{1}S$ by ${\cal F}=(f_{t})_{t\in\R}$ .

The thermodynamic objects that we are interested in this paper are
topological pressure and equilibrium states. For a continuous potential
(i.e., function) $\vp:T^{1}S\to\R$, the \textit{topological pressure
}$P(\vp)$ of $\vp$ with respect to ${\cal F}$ can be described
by the \textit{variational principle}: 
\[
P(\vp)=\sup\{h_{\mu}({\cal F})+\int\vp d\mu:\ \mu{\rm \ is\ }a\mbox{\ }{\cal F}-{\rm invariant\ }{\rm Borel\ }{\rm probability\ }{\rm measure}\}
\]
where $h_{\mu}({\cal F})$ is the measure-theoretic entropy of $\mu$
with respect to ${\cal F}$. An invariant Borel probability measure
$\mu$ achieving the supremum is called an \textit{equilibrium state}.
We notice that when $\vp$ is identically equal to $0$ then $P(0)$
is equal to the topological entropy $h_{{\rm top}}({\cal F})$ of
${\cal F}$, and an equilibrium state for $\vp\equiv0$ is called
a \textit{measure of maximum entropy}.

The non-uniform hyperbolicity of ${\cal F}$ comes from the existence
of the \textit{singular set} $\Sing$. For surfaces without focal
points, we can describe the singular set as $\Sing=\{v\in T^{1}S:\ K(\pi f_{t}v)=0\ \forall t\in\R\}$
where $\pi:T^{1}S\to S$ is the canonical projection and $K$ is the
Gaussian curvature (see Section \ref{sec:Preliminaries-of-geometry}
for alternative characterizations of the singular set). The complement
of $\Sing$ is called the \textit{regular set} and denoted by $\Reg$.

Our first result asserts the uniqueness of the equilibrium states
for the potentials with \textquotedblleft nice\textquotedblright{}
regularity and carrying smaller pressure on the singular set. The
potentials with \textquotedblleft nice\textquotedblright{} regularity
include H\"{o}lder potentials and the \textit{geometric potential}
$\vp^{u}$ defined as 
\[
\varphi^{u}(v):=-\lim_{t\to0}\frac{1}{t}{\displaystyle \log\det(}\left.df_{t}\right|_{E^{u}(v)}).
\]

\begin{thmx}

Let $S$ be a surface of genus greater than or equal to 2 without
focal points and ${\cal F}$ be the geodesic flow over $S$. Let $\vp:T^{1}S\to\R$
be a H\"{o}lder continuous potential or a scalar multiple of the
geometrical potential. Suppose $\vp$ verifies the pressure gap property
$P({\rm Sing},\vphi)<P(\vphi)$, then $\vphi$ has a unique equilibrium
state $\mu_{\vphi}$.

\end{thmx}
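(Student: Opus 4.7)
My plan is to apply the abstract uniqueness criterion of Climenhaga--Thompson, following the framework that Burns--Climenhaga--Fisher--Thompson used in \cite{BCFT2017} for rank 1 nonpositively curved manifolds, and extend it to the no-focal-points setting via the geometric tools mentioned in the introduction (the flat strip theorem, $C^2$ horocycles, etc.). The Climenhaga--Thompson machinery reduces the uniqueness question to producing a decomposition $(\mathcal{P},\mathcal{G},\mathcal{S})$ of the space of finite orbit segments $\ut\times[0,\infty)$ such that (i) $\mathcal{G}$ satisfies the specification property at all scales, (ii) $\vphi$ has the Bowen property on $\mathcal{G}$, (iii) the pressures of the ``bad'' collections $\mathcal{P}$ and $\mathcal{S}$ are strictly smaller than $P(\vphi)$, and (iv) the set of non-expansive points carries pressure strictly less than $P(\vphi)$. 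Once these hypotheses are verified, uniqueness of the equilibrium state follows from the abstract theorem.

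I would define $\mathcal{G}=\mathcal{G}(\eta)$ to consist of orbit segments $(v,t)$ whose endpoints lie in the regular set with a definite distance from $\Sing$, and such that the proportion of time spent in a neighborhood of $\Sing$ is at most $\eta t$ at both ends (the standard BCFT-style decomposition). Specification on $\mathcal{G}$ then reduces to establishing uniform expansion/contraction rates along stable and unstable horospheres for orbit segments in $\mathcal{G}$, together with a closing/shadowing lemma. Here the no-focal-points hypothesis enters crucially: the flat strip theorem still holds, stable and unstable Jacobi fields are monotone and the horospheres are $C^2$, so the comparison geometry behind the BCFT hyperbolicity estimates can be reproduced, once one replaces nonpositive curvature convexity of $\|J(t)\|$ with the weaker monotonicity of Jacobi field norms available under no focal points (this is where the work of Gelfert--Ruggiero \cite{Gelfert:2017tx} is the natural model to follow).

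For the Bowen property, the H\"older case is standard given that Bowen balls in $\mathcal{G}$ have controlled geometry away from $\Sing$. For a scalar multiple $q\vphi^u$ of the geometric potential, I would show that along orbit segments in $\mathcal{G}$ the Birkhoff sum $\int_0^t \vphi^u(f_s v)\,ds$ is comparable to $-\log\|df_t|_{E^u(v)}\|$ up to a uniform constant, and then use the uniform hyperbolicity on $\mathcal{G}$ to compare these Jacobians between orbits that stay Bowen-close; this is parallel to \cite{BCFT2017}, with the focal-point-free geometry providing the required $C^2$ regularity and matching of horocycles. The pressure gap $P(\mathcal{P}\cup\mathcal{S},\vphi)<P(\vphi)$ follows because orbit segments in $\mathcal{P}\cup\mathcal{S}$ spend a definite fraction of time near $\Sing$, so their pressure is bounded above (for $\eta$ small) by a quantity tending to $P(\Sing,\vphi)$, which by hypothesis is $<P(\vphi)$. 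Expansivity at scale $\varepsilon$ is controlled by $\Sing$ itself via the flat-strip theorem, so the expansivity pressure gap also reduces to the standing hypothesis. The main obstacle I anticipate is step (ii)-(iii) combined: making quantitative the hyperbolicity and distortion estimates along $\mathcal{G}$ without the convexity tools of nonpositive curvature, particularly for the geometric potential, where one must control the Jacobian of $df_t|_{E^u}$ uniformly on orbit segments that only stay in $\Reg$ in an averaged sense rather than pointwise.
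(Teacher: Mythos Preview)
Your overall plan is exactly the paper's: verify the Climenhaga--Thompson hypotheses via a $(\mathcal P,\mathcal G,\mathcal S)$ decomposition, then invoke their abstract uniqueness theorem. Specification, the Bowen property for H\"older and for $q\vphi^u$, the bound $P_{\exp}^\perp(\vphi)\le P(\Sing,\vphi)$ via the flat strip theorem, and the pressure bound on the bad collections via upper semicontinuity of entropy are all handled essentially as you describe.

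The one genuine point you have not yet resolved is precisely the one you flag at the end, and the paper's solution is different from ``the standard BCFT-style decomposition'' you propose. In \cite{BCFT2017} the good collection is cut out by the function $\lambda(v)=\min(k^u(v),k^s(v))$; the convexity of $\|J(t)\|$ in nonpositive curvature forces $\lambda$ to behave well enough that $\{\lambda=0\}$ already detects $\Sing$ and yields uniform contraction/expansion estimates on $\mathcal G(\eta)$. Without focal points you only have monotonicity of $\|J(t)\|$, and $\lambda(v)=0$ does \emph{not} imply $v\in\Sing$; one needs $\lambda(f_tv)=0$ for all $t$. The paper's device is to replace $\lambda$ by the time-averaged quantity
\[
\lambda_T(v)=\int_{-T}^{T}\lambda(f_\tau v)\,d\tau,
\]
and to build $\mathcal G_T(\eta)$ and $\mathcal B_T(\eta)$ from integral lower and upper bounds on $\lambda_T$ along initial and terminal subsegments. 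This averaging recovers the two facts you need: (a) $\lambda_T(v)=0$ for all $T$ characterizes $\Sing$, so $\mathcal M(\Sing)=\bigcap_{T,\eta}\{\mu:\int\lambda_T\,d\mu\le\eta\}$ and the pressure gap on $[\mathcal B_T(\eta)]$ follows; and (b) on $\mathcal G_T(\eta)$ one gets genuine exponential contraction $d^s(f_\tau w,f_\tau w')\le C e^{-\eta\tau/4T}d^s(w,w')$ (and the analogue for $d^u$), which is what drives both specification and the Bowen property, including the Riccati-equation estimate for $\vphi^u$. Your ``endpoints at definite distance from $\Sing$ plus small proportion of time near $\Sing$'' description does not obviously deliver (b) without convexity; the passage through $\lambda_T$ is the missing ingredient.
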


The proof of Theorem A uses the same idea as the proof of \cite[Theorem A]{BCFT2017}.
Both \cite{BCFT2017} and this paper follow the general framework
introduced by Bowen \cite{Bowen:1974df}, which was subsequently extended
by Franco \cite{Franco:1977jy} and recently extended further by Climenhaga
and Thompson \cite{Climenhaga:2016ut}. We have more detailed discussion
of this method in Section \ref{sec:Preliminaries-of-dynamics}. Roughly
speaking, the general framework follows the original work of Bowen
stating that when the potential has ``nice'' regularity (namely,
the Bowen property) and the system has ``sufficient hyperbolicity''
(namely, the specification property) then this potential has a unique
equilibrium state. We discuss more details of this method in Section
\ref{sec:Preliminaries-of-dynamics} and Section \ref{sec:Preliminaries-of-geometry}.

The second result, following Theorem A, we discuss several ergodic
properties of these unique equilibrium states. We successfully extend
several properties known to hold under uniform hyperbolic cases (cf.,
for example, \cite{Parry:1990tn}), as well as under nonpositively
curved surfaces (cf., for example, \cite{Pollicott:1996jq}, \cite{Ledrappier:2016cn}
and \cite{BCFT2017}). Namely, these unique equilibrium states are
Bernoulli and the weak{*} limit of the weighted regular periodic orbits.
Recall that other weaker ergodic properties such as being Kolmogorov
and strongly mixing follows once the measure is Bernoulli.

\begin{thmx}

Suppose $\vp$ satisfies the same assumptions in Theorem A. Then the
unique equilibrium state $\mu_{\vphi}$ is fully supported, $\mu_{\varphi}({\rm Reg})=1$,
Bernoulli, and a weak$^{*}$ limit of the weighted regular periodic
orbits.

\end{thmx}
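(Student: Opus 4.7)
The plan is to derive the four conclusions of Theorem B from the unique equilibrium state $\mu_\varphi$ furnished by Theorem A, following the scheme developed in \cite{BCFT2017}. Uniqueness immediately forces $\mu_\varphi$ to be ergodic, since every ergodic component of an equilibrium state is itself an equilibrium state; this fact underlies everything that follows. The proof then breaks into three steps: (i) $\mu_\varphi$ charges the regular set and is fully supported; (ii) $\mu_\varphi$ is Bernoulli via Pesin theory; (iii) the weighted regular periodic orbits equidistribute to $\mu_\varphi$.

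For step (i), flow-invariance of $\mathrm{Sing}$ together with ergodicity of $\mu_\varphi$ gives $\mu_\varphi(\mathrm{Sing})\in\{0,1\}$, and the pressure gap $P(\mathrm{Sing},\varphi)<P(\varphi)$ rules out the value $1$, since otherwise
\[
P(\varphi)=h_{\mu_\varphi}(\mathcal{F})+\int\varphi\,d\mu_\varphi\le P(\mathrm{Sing},\varphi),
\]
a contradiction. Hence $\mu_\varphi(\mathrm{Reg})=1$. For full support I would extract from the Climenhaga-Thompson construction of $\mu_\varphi$ a Gibbs-type lower bound on Bowen balls centered at points of the good regular collection, then use the density of $\mathrm{Reg}$-orbits together with topological transitivity of $\mathcal{F}$ to see that every nonempty open set $U\subset T^1S$ contains such a Bowen ball, so $\mu_\varphi(U)>0$.

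For step (ii), the pressure gap combined with the variational principle forces $h_{\mu_\varphi}(\mathcal{F})>0$, and together with $\mu_\varphi(\mathrm{Reg})=1$ this makes $\mu_\varphi$ a hyperbolic measure in the sense of Pesin. The Climenhaga-Thompson Gibbs property supplies a local product structure on $\mu_\varphi$, from which I would first deduce the Kolmogorov property for the time-one map via Ledrappier's argument as adapted in \cite[\S9]{BCFT2017}, and then upgrade $K$ to Bernoulli via the Chernov-Haskell/Ornstein-Weiss theorem for smooth hyperbolic systems with local product structure. The main obstacle at this stage is verifying the hypotheses of that theorem in the no-focal-points setting rather than the nonpositive-curvature setting: one must check that Pesin stable/unstable manifolds align properly with the local product decomposition of $\mu_\varphi$, and for this I would rely on the $C^2$ regularity of horocycles and the flat-strip theorem flagged in the introduction, both of which persist under the no-focal-points hypothesis.

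For step (iii), define the weighted regular periodic orbit measures
\[
\mu_T:=\frac{1}{\Lambda_T}\sum_{\gamma\in\mathrm{Per}_T^{\mathrm{Reg}}(\eta)}e^{\int_\gamma\varphi}\,\delta_\gamma,
\]
where $\mathrm{Per}_T^{\mathrm{Reg}}(\eta)$ is the collection of prime closed regular orbits with period close to $T$ meeting a small regular neighborhood $\eta$, $\delta_\gamma$ is the normalized Lebesgue measure on $\gamma$, and $\Lambda_T$ is the total weight. Using the regular specification property together with the Bowen property of $\varphi$ on the good collection, one would show $\Lambda_T\asymp e^{TP(\varphi)}$ and that the $\mu_T$ form an approximating equilibrium-state sequence in the Climenhaga-Thompson sense; uniqueness then forces $\mu_T\to\mu_\varphi$ weak-$*$. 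I expect the main technical difficulty of the whole theorem to lie here, in producing enough closed regular orbits with precise shadowing estimates to secure the lower bound $\Lambda_T\gtrsim e^{TP(\varphi)}$; this rests on an Anosov-type closing lemma valid on $\mathrm{Reg}$, which is available because the geodesic flow is uniformly hyperbolic along regular orbit segments of bounded geometry.
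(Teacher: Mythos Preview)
Your plan for $\mu_\varphi(\mathrm{Reg})=1$, full support, and the equidistribution of weighted regular closed geodesics matches the paper's argument (Propositions~\ref{prop:full_measure}, \ref{prop:fully_supported}, \ref{prop:weight_average_regular_orbits}) essentially step for step. The real divergence is in step~(ii), and there your route has a genuine gap.

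The paper does not pass through the $K$-property or any local product structure. It proves hyperbolicity of $\mu_\varphi$ directly from the geometry: for an ergodic measure $\mu$ with $\mu(\mathrm{Reg})>0$, the inequality $\chi(v)\ge\lim_{t\to\infty}\tfrac1t\int_0^t k^u(f_\tau v)\,d\tau$ combined with Lemma~\ref{lem:supp_in_sing} forces positive Lyapunov exponent $\mu$-a.e.\ (Lemma~\ref{lem:0LE_implies_Sing} and Remark~\ref{rem:ergodic_Reg>0_hyperbolic}). Bernoulli then follows in one line by citing Ledrappier--Lima--Sarig \cite{Ledrappier:2016cn}, in the sharpened form of Lima--Sarig \cite{Lima:2014tc} which requires only hyperbolicity of the measure, not positive entropy.

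Your route has two problems. First, the assertion that the pressure gap alone forces $h_{\mu_\varphi}(\mathcal F)>0$ is not justified: from $P(\mathrm{Sing},\varphi)<P(\varphi)=h_{\mu_\varphi}+\int\varphi\,d\mu_\varphi$ one cannot read off a lower bound on $h_{\mu_\varphi}$ without independent control of $\int\varphi\,d\mu_\varphi$. Second, and more seriously, the Climenhaga--Thompson construction does \emph{not} give $\mu_\varphi$ a local product structure; it supplies only a one-sided lower Gibbs bound on Bowen balls centered in $\mathcal G^M$ (Lemma~\ref{lem:6.1}). The Ledrappier-type argument you have in mind for the $K$-property needs absolute continuity of the conditionals of $\mu_\varphi$ along stable and unstable leaves, and this is not available from the non-uniform specification framework. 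Establishing the $K$-property for Climenhaga--Thompson equilibrium states without symbolic coding is a separate, substantially harder problem; the paper sidesteps it entirely by outsourcing the work to the Lima--Sarig coding for hyperbolic measures of smooth surface flows.
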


In our last main result, we study on the geometric potential $\vp^{u}$
and its pressure function $q\mapsto P(q\vp^{u})$. We give the full
description of the pressure function, and show that the situation
is analogous to the nonpositively curved manifolds (see, for example,
\cite{Burns:2014gq} and \cite{BCFT2017}).

\begin{thmx}

Under the same assumption in Theorem A, suppose $\varphi=q\varphi^{u}$
is a scalar multiple of the geometric potential. Then, for $q<1$,
$\vphi$ has a unique equilibrium state $\mu_{q}$ which is, fully
supported, $\mu_{\varphi}({\rm Reg})=1$, Bernoulli, and the weak$^{*}$
limit of the weighted regular periodic orbits. Furthermore, the map
$q\mapsto P(q\vphi^{u})$ is $C^{1}$ for $q<1$, and $P(q\vp^{u})=0$
for $q\geq1$ when $\Sing\neq\emptyset$. 

\begin{figure}[H]

\begin{center}\includegraphics[scale=0.5]{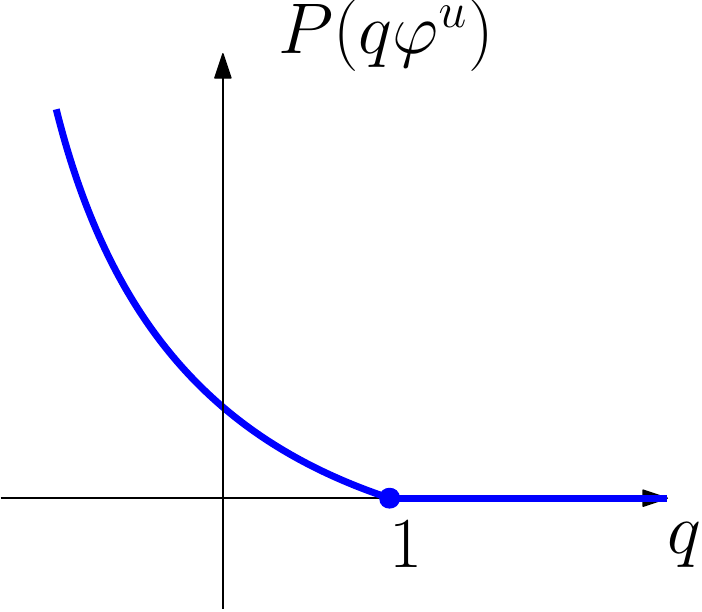}\end{center}

\caption{Pressure function}

\end{figure}

\end{thmx}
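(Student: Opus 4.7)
The plan is to reduce the first three assertions of Theorem C to Theorems A and B by verifying the pressure gap $P(\Sing,q\vp^{u})<P(q\vp^{u})$ for each $q<1$, then to read off $C^{1}$ regularity on $(-\infty,1)$ from uniqueness of the equilibrium state via convex analysis, and finally to treat the $q\geq 1$ regime directly via Ruelle's inequality together with an invariant measure on $\Sing$.

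For the pressure gap, I start by noting that $\vp^{u}\equiv 0$ on $\Sing$: for $v\in\Sing$ no Jacobi field along the orbit grows or decays in a surface without focal points, so $\det(df_{t}|_{E^{u}(v)})\equiv 1$ and the limit defining $\vp^{u}(v)$ is zero. Therefore $P(\Sing,q\vp^{u})=h_{\mathrm{top}}(\Sing,\cal{F})$. Any $\cal{F}$-invariant probability $\mu$ supported in $\Sing$ has all Lyapunov exponents equal to zero, so Ruelle's inequality forces $h_{\mu}=0$; the variational principle then yields $h_{\mathrm{top}}(\Sing,\cal{F})=0$. On the other side, the Liouville measure $m$ on $T^{1}S$ is $\cal{F}$-invariant, smooth, and (on a closed genus $\geq 2$ surface without focal points) has positive Lyapunov exponent; by Pesin's entropy formula it satisfies $h_{m}=\chi^{+}(m)>0$. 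Hence
\[
P(q\vp^{u})\;\geq\; h_{m}+q\int\vp^{u}\,dm\;=\;(1-q)\chi^{+}(m)\;>\;0
\]
for every $q<1$, which closes the gap. Theorems A and B then immediately furnish the unique equilibrium state $\mu_{q}$ with full support, $\mu_{q}(\Reg)=1$, the Bernoulli property, and equidistribution by weighted regular periodic orbits.

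The $C^{1}$ regularity of $q\mapsto P(q\vp^{u})$ on $(-\infty,1)$ is then a standard convexity argument: $P$ is convex in $q$, and its subdifferential at $q$ is exactly the set $\{\int\vp^{u}\,d\mu:\mu\text{ is equilibrium for }q\vp^{u}\}$. Uniqueness reduces this set to the single point $\int\vp^{u}\,d\mu_{q}$, giving differentiability with $P'(q)=\int\vp^{u}\,d\mu_{q}$; upper-semicontinuity of the entropy map together with weak-$*$ compactness makes $q\mapsto\mu_{q}$ continuous, so $P'$ is continuous and hence $P\in C^{1}$ on $(-\infty,1)$. For the plateau at $q\geq 1$, Ruelle's inequality gives $h_{\mu}+\int\vp^{u}\,d\mu\leq 0$ for every invariant $\mu$, and since $\int\vp^{u}\,d\mu\leq 0$ we even obtain $h_{\mu}+q\int\vp^{u}\,d\mu\leq h_{\mu}+\int\vp^{u}\,d\mu\leq 0$, so $P(q\vp^{u})\leq 0$; when $\Sing\neq\emptyset$ any invariant probability on the compact set $\Sing$ contributes $h_{\mu}+q\int\vp^{u}\,d\mu=0$ to the variational principle, giving $P(q\vp^{u})\geq 0$, so $P(q\vp^{u})=0$.

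The main obstacle is the strict lower bound $P(q\vp^{u})>0$ in the pressure gap. It reduces to exhibiting an $\cal{F}$-invariant measure of positive entropy that saturates Pesin's entropy formula. Using Liouville requires knowing $m(\Sing)=0$ and $\chi^{+}(m)>0$, which for surfaces without focal points of genus $\geq 2$ should follow from the flat-strip theorem and the negativity of the Euler characteristic, but is a nontrivial input that must be assembled from the geometric machinery developed earlier in the paper. A fallback avoiding smooth measures is a Katok-style horseshoe construction inside $\Reg$: approximating the hyperbolic MME produced by Knieper--Gelfert-Ruggiero by a uniformly hyperbolic basic set $\Lambda$ of positive topological entropy, on which Pesin equality for an SRB measure on $\Lambda$ is automatic, and running the same comparison on $\Lambda$ in place of $T^{1}S$. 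Either route is the delicate step; the remaining items of the theorem follow routinely once the gap is in place.
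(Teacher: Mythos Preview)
Your proposal is correct and follows essentially the same route as the paper: verify the pressure gap for $q<1$ via the Liouville measure and Pesin's formula, invoke Theorems A and B, deduce $C^{1}$ from uniqueness of the equilibrium state, and handle $q\geq 1$ by Ruelle's inequality together with a measure on $\Sing$. The only substantive differences are in packaging. For the ``delicate step'' you flag---namely $\chi^{+}(m)>0$ for the Liouville measure---the paper simply cites Burns' theorem that $\mu_{L}(\Reg)>0$ on a closed surface without focal points and genus $\geq 2$, and then applies the contrapositive of Lemma \ref{lem:0LE_implies_Sing}; your Katok-horseshoe fallback is therefore unnecessary. For $C^{1}$, the paper cites Walters' theorem (uniqueness of equilibrium state plus entropy-expansiveness from Liu--Wang implies differentiability of the pressure), whereas you unpack the underlying convex-analysis argument directly; these are equivalent, and in fact once you have differentiability everywhere on $(-\infty,1)$, convexity alone already gives continuity of $P'$, so your appeal to continuity of $q\mapsto\mu_{q}$ is not needed.
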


This paper is organized as follows. In Section \ref{sec:Preliminaries-of-dynamics},
we go over the background in thermodynamic formalism; in particular,
we introduce our primary tool, the Climenhaga-Thompson program. In
Section \ref{sec:Preliminaries-of-geometry}, we recall the definitions
and geometric features of surfaces and manifolds without focal points.
Section \ref{sec:Decompositions-for-finite}, \ref{sec:The-specification-property},
and \ref{sec:The-Bowen-property} are devoted to setting up the framework
for the Climenhaga-Thompson program, namely, orbit decomposition,
the specification property, and the Bowen property. We will prove
Theorem A in Section \ref{sec:Pressure-gap-and_Thm_A} and Theorem
B in Section \ref{sec:Prop_eq_states_Thm_B}. In Section \ref{sec:Thm_C_exampels},
we will show Theorem C and provide some examples of potentials satisfying
Theorem A.

\begin{acknowledgement*}
The authors are incredibly grateful to Vaughn Climenhaga and Daniel
Thompson for not only organizing the 2017 MRC program in dynamical
systems and proposing this project to the authors, but also for their
continuous support and help after the MRC program. This paper would
not have been possible without them. The authors would like to thank
Matthew Smith and R\'{e}gis Var\~{a}o for initiating and discussing
this MRC project with them, Keith Burns for sharing many insightful
ideas, and to Amie Wilkinson for suggestions on improving the presentation
of this paper.
\end{acknowledgement*}

\section{Preliminaries of dynamics\label{sec:Preliminaries-of-dynamics}}

In this section, we introduce necessary background in thermodynamics.
An excellent reference for terminologies introduced in this section
is Walters' book \cite{Walters:2000vc}.

Throughout this section, $(X,d)$ is a compact metric space, ${\cal F}=(f_{t})_{t\in\R}$
is a continuous flow on $X$, and $\vp:X\to\R$ is a continuous potential.

\subsection{Topological pressure}

For the convenience, we first define the following terms.

\begin{defn}
For any $t,\delta>0$ and $x\in X$,

\begin{enumerate}[font=\normalfont] 

\item The \textit{Bowen ball} of radius $\delta$ and order $t$
at $x$ is defined as 
\[
B_{t}(x,\delta)=\{y\in X:\ d(f_{\tau}x,f_{\tau}y)<\delta\ {\rm for}\ {\rm all\ }0\leq t\leq\tau\}.
\]

\item We say a set $E$ is $(t,\delta)-$\textit{separated} if for
all $x,y\in E$ with $x\neq y$, there exists $t_{0}\in[0,t]$ such
that $d(f_{t_{0}}x,f_{t_{0}}y)\geq\delta$. 

\end{enumerate}
\end{defn}

\begin{defn}
[Finite length orbit segments] Any subset ${\cal C}\subset X\times[0,\infty)$
can be identified with a collection of \textit{finite length orbit
segments}. More precisely, 
\[
{\cal C}=\{(x,t)\in X\times[0,\infty)\}
\]
 where $(x,t)$ is identified with the orbit $\{f_{\tau}x:\ 0\leq\tau\leq t\}.$
We denote $\Phi(x,t):=\int_{0}^{t}\vp(f_{\tau}x)d\tau$ the integral
of $\vp$ along an orbit segment $(x,t)$.
\end{defn}

Let ${\cal C}_{t}:=\{x\in X:\ (x,t)\in{\cal C}\}$ be the set of length
$t$ orbit segments in ${\cal C}$. We define 
\[
\Lambda({\cal C},\vp,\delta,t)=\sup\{\sum_{x\in E}e^{\Phi(x,t)}:\ E\subset{\cal C}_{t}\ {\rm is}\ (t,\delta)-{\rm separated}\}.
\]

\begin{defn}
[Topological pressure] The \textit{pressure} of $\vp$ on ${\cal C}$
is defined as 
\[
P({\cal C},\vp)=\lim_{\delta\to0}\limsup_{t\to\infty}\frac{1}{t}\log\Lambda({\cal C},\vp,\delta,t).
\]
When ${\cal C=}X\times[0,\infty)$, we denote $P(X\times[0,\infty),\vp)$
by $P(\vp)$ and call it the \textit{topological pressure} of $\vp$
with respect to ${\cal F}$.
\end{defn}

As noted in the introduction, the pressure $P(\vp)$ satisfies the
variational principle
\[
P(\vp)=\sup_{\mu\in{\cal M}({\cal F})}\{h_{\mu}({\cal F})+\int\vp d\mu\}
\]
where ${\cal M}({\cal F})$ is the set of ${\cal F}-$invariant probability
measures on $X$. Also, a ${\cal F}-$invariant probability measure
$\mu$ realizing the supremum is called an \textit{equilibrium state}
for $\vp$.

\begin{rem}
$\ $\begin{enumerate}[font=\normalfont] 

\item When the entropy map $\mu\mapsto h_{\mu}$ is upper semi-continuous,
any weak{*} limit of a sequence invariant measures approximating the
pressure is an equilibrium state. In particular, there exists at least
one equilibrium state for every continuous potentials.

\item In our setting, the geodesics flow over surfaces without focal
points, the upper semi-continuity of the entropy map is guaranteed
by the entropy-expansivity established in \cite{Liu:2016de}.

\end{enumerate}
\end{rem}

\subsection{Climenhaga-Thompson's criteria for the uniqueness of equilibrium
states}

Climenhaga and Thompson have a series of successful results on establishing
the uniqueness of the equilibrium states of various non-uniformly
hyperbolic systems; see \cite{Climenhaga:2012ed,Climenhaga:2013gm,Climenhaga:2015wf,Climenhaga:2016ut,BCFT2017}.
This work follows the same method, so called, the Climenhaga-Thompson
program. In this subsection, we aim to introduce terms using in the
Climenhaga-Thompson program.

One of the primary ideas in the Climenhaga-Thompson program is to
relax the original assumptions from the work of Bowen on the uniformly
hyperbolic systems \cite{Bowen:1974df} by asking that the \textquotedblleft hyperbolic\textquotedblright{}
behavior on the system and the \textquotedblleft good regularity\textquotedblright{}
on the potential to hold on a (large) collection of finite orbit segments
${\cal C}$ rather than in the whole space. This flexibility is essential
for applying this method to non-uniformly hyperbolic systems. To be
more precise, the ``hyperbolic'' behavior refers to the \textit{specification}
and the ``good regularity'' refers to the potential having the\textit{
Bowen property}.

\begin{defn}
[Specification] We say ${\cal C}\subset X\times[0,\infty)$ has \textit{specification}
at scale $\rho>0$ if there exists $\tau=\tau(\rho)$ such that for
every finite sub-collection of ${\cal C}$, i.e., $(x_{1},t_{1})$,
$(x_{2},t_{2}),$..., $(x_{N},t_{N})\in{\cal C}$, there exists $y\in X$
and transition times $\tau_{1},...,\tau_{N-1}\in[0,\tau]$ such that
for $s_{0}=\tau_{0}=0$ and $s_{j}=\sum_{i=1}^{j}t_{i}+\sum_{i=1}^{j-1}\tau_{i}$,
we have 
\[
f_{s_{j-1}+\tau_{j-1}}(y)\in B_{t_{j}}(x_{j},\rho)
\]
for $j\in\{1,2,...,N\}$. If ${\cal C}$ has specification at all
scales, then we say ${\cal C}$ has \textit{specification}. We say
that the flow has specification if the entire orbit space ${\cal C}=X\times[0,\infty)$
has specification. 
\end{defn}

\begin{defn}
[Bowen property] We say $\vp:X\to\R$ a continuous potential has
the \textit{Bowen property} on ${\cal C}\subset X\times[0,\infty)$
if there are $\ep,K>0$ such that for all $(x,t)\in{\cal C}$, we
have 
\[
\sup_{y\in B_{t}(x,\ep)}\left|\Phi(x,t)-\Phi(y,t)\right|\leq K
\]
where $\Phi(x,t)=\int_{0}^{t}\vp(f_{\tau}x)d\tau$.
\end{defn}

\begin{defn}
[Decomposition of orbit segments] A decomposition of $X\times[0,\infty)$
consists of three collections ${\cal P}$, ${\cal G}$, ${\cal S}\subset X\times[0,\infty)$
such that:\begin{enumerate}[font=\normalfont] 

\item There exist $p,g,s:X\times[0,\infty)\to\R$ such that for each
$(x,t)\in X\times[0,\infty)$, we have $t=p(x,t)+g(x,t)+s(x,t)$, 

\item $(x,p(x,t))\in{\cal P}$, $(f_{p(x,t)}x,g(x,t))\in{\cal G}$,
and $(f_{p(x,t)+g(x,t)}x,s(x,t))\in{\cal S}$.

\end{enumerate}
\end{defn}

In Section \ref{sec:Decompositions-for-finite}, we will give the
precise construction of a decomposition $({\cal P},{\cal G},{\cal S})$
and prove that such decomposition has required properties in subsequent
sections. Due to some technical reasons (see \cite{Climenhaga:2016ut}),
we need to work on collections slightly bigger than ${\cal P}$ and
${\cal S}$, namely, 
\begin{align*}
[{\cal P}] & :=\{(x,n)\in X\times\N:\ (f_{-s}x,n+s+t)\in{\cal P}\ {\rm for}\ {\rm some}\ s,t\in[0,1]\},
\end{align*}
and similarly for $[{\cal S}].$

The following three terms are the remaining pieces needed for stating
the Climenhaga-Thompson criteria.

\begin{defn}
For $x\in X$, $\ep>0$ and $\vp:X\to\R$ a potential \begin{enumerate}[font=\normalfont] 

\item The \textit{bi-infinite Bowen ball} $\G_{\ep}(x)$ is defined
as 
\[
\G_{\ep}(x):=\{y\in X:\ d(f_{t}x,f_{t}y)\leq\ep\ {\rm for}\ {\rm all}\ t\in\R\}.
\]

\item The set of \textit{non-expansive points at scale} $\ep$ is
defined as
\[
{\rm NE(\ep}):=\{x\in X:\ \G_{\ep}\nsubseteq f_{[-s,s]}(x)\ {\rm for\ }{\rm any}\ s>0\}
\]
where $f_{[a,b]}(x)=\{f_{t}x:\ t\in[a,b]\}.$

\item \textit{The pressure of obstructions to expansivity} for $\vp$
is defined as 
\[
P_{{\rm exp}}^{\perp}(\vp):=\lim_{\ep\to0}P_{\exp}^{\perp}(\vp,\ep)
\]
 where 
\[
P_{\exp}^{\perp}(\vp,\ep):=\sup\{h_{\mu}(f_{1})+\int\vp d\mu:\ \mu\in{\cal M}^{e}({\cal F})\ {\rm and\ }\mu({\rm NE}(\ep))=1\}
\]
 and ${\cal M}^{e}({\cal F})$ is the set of ${\cal F}-$invariant
ergodic probability measures on $X$. 

\end{enumerate}
\end{defn}

\begin{rem}
For uniform hyperbolic systems, ${\rm NE}(\ep)=\emptyset$ for $\ep$
sufficiently small; thus $P_{\exp}^{\perp}(\vp)=-\infty$. In other
words, the condition $P_{{\rm exp}}^{\perp}(\vp)<P(\vp)$ always holds
in Bowen's work \cite{Bowen:1974df}. 
\end{rem}

Finally, the following theorem is the Climenhaga-Thompson criteria
for the uniqueness of equilibrium states. We will use this theorem
to prove Theorem A in Section \ref{sec:Pressure-gap-and_Thm_A}.

\begin{thm}
\cite[Theorem A]{Climenhaga:2016ut}\label{thm:C-T_criteria} Let
$(X,{\cal F})$ be a flow on a compact metric space, and $\vphi:X\to\R$
be a continuous potential. Suppose that $P_{{\rm exp}}^{\perp}(\vphi)<P(\vphi)$
and $X\times[0,\infty)$ admits a decomposition $({\cal P},{\cal G},{\cal S})$
with the following properties: \begin{itemize}[font=\normalfont]

\item[(I)] ${\cal G}$ has specification;

\item[(II)] $\vphi$ has Bowen property on ${\cal G}$;

\item[(III)] $P([{\cal P}]\cup[{\cal S}],\vphi)<P(\vphi)$.

\end{itemize}

Then $(X,{\cal F},\vphi)$ has a unique equilibrium state $\mu_{\vphi}$.
\end{thm}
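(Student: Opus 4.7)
The plan is to adapt Bowen's classical uniqueness scheme so that the hyperbolic behavior and the potential regularity are only demanded on the collection $\mathcal{G}$ rather than on all of $X\times[0,\infty)$. First, I would reduce to proving that any two ergodic equilibrium states coincide. Ergodic decomposition together with $P_{\exp}^{\perp}(\vp)<P(\vp)$ guarantees that every ergodic equilibrium state $\mu$ satisfies $\mu(\mathrm{NE}(\ep))=0$ for some small $\ep$, so Bowen balls at $\mu$-generic points genuinely reflect the local dynamics and do not collapse onto a single orbit arc.

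Next, I would construct a distinguished equilibrium state $\mu_\vp$ by weighted empirical sums along $\mathcal{G}$. For a small scale $\delta>0$ and each $t>0$, take a maximal $(t,\delta)$-separated subset $E_t$ of $\mathcal{G}_t$ and form
\[
\mu_t := \frac{1}{\Lambda_t}\sum_{x\in E_t} e^{\Phi(x,t)}\cdot\frac{1}{t}\int_0^t \delta_{f_\tau x}\,\dd\tau, \qquad \Lambda_t:=\sum_{x\in E_t}e^{\Phi(x,t)}.
\]
Condition (III), together with the time-shift slack built into the $[\,\cdot\,]$-enlargement, forces $\limsup_{t\to\infty}\frac{1}{t}\log\Lambda_t=P(\vp)$, because orbits whose decomposition carries a long $\mathcal{P}$- or $\mathcal{S}$-part contribute exponentially sub-maximal weight. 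Any weak-$*$ accumulation point $\mu_\vp$ of $(\mu_t)$ is an $\mathcal{F}$-invariant equilibrium state, since upper semicontinuity of the entropy (combined with the $P_{\exp}^{\perp}$ condition to rule out mass escaping to the non-expansive set) yields $h_{\mu_\vp}(\mathcal{F})+\int\vp\,\dd\mu_\vp=P(\vp)$.

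The heart of the argument is to establish two-sided Gibbs-type bounds for $\mu_\vp$ along good segments: there exists $C>0$ such that for every $(x,t)\in\mathcal{G}$,
\[
C^{-1}\, e^{-tP(\vp)+\Phi(x,t)} \leq \mu_\vp\bigl(B_t(x,\rho)\bigr) \leq C\, e^{-tP(\vp)+\Phi(x,t)}.
\]
The lower bound uses specification (I) to glue any finite collection of good segments into a single orbit that visits all prescribed Bowen balls, while the Bowen property (II) controls the Birkhoff sums along the glued orbit, producing many $(t',\delta)$-separated candidates with comparable weights. The upper bound partitions an arbitrary orbit $(y,t)$ according to its decomposition $(p,g,s)$, applies (III) to bound the contribution of the bad $[\mathcal{P}]$- and $[\mathcal{S}]$-pieces strictly below $e^{tP(\vp)}$, and uses Bowen on $\mathcal{G}$ to control the middle. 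For uniqueness, any ergodic equilibrium state $\nu$ satisfies the Brin--Katok formula, so $-\tfrac{1}{t}\log\nu(B_t(x,\ep))+\tfrac{1}{t}\Phi(x,t)\to P(\vp)$ on a $\nu$-generic set; combined with the Gibbs lower bound for $\mu_\vp$, this exhibits a set of positive $\mu_\vp$-measure on which $d\nu/d\mu_\vp$ is bounded, and ergodicity of both measures then forces $\nu=\mu_\vp$.

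The hardest step will be the Gibbs upper bound. Controlling the contribution of orbit segments with long prefix in $\mathcal{P}$ or long suffix in $\mathcal{S}$ requires careful bookkeeping under $(t,\delta)$-separation, and the passage from $\mathcal{P},\mathcal{S}$ to $[\mathcal{P}],[\mathcal{S}]$ is essential here: without the small time-shift slack, one cannot guarantee that the decomposition of a Bowen-close orbit remains pressure-subcritical, and the pressure gap (III) alone would not suffice to suppress the bad contributions.
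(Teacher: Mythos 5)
The statement you are proving is Theorem~\ref{thm:C-T_criteria}, which the paper cites as \cite[Theorem A]{Climenhaga:2016ut} and does not prove: there is no internal argument here to compare against, since it is an external black-box result. Taken on its own as an outline of the Climenhaga--Thompson proof, your sketch does capture the correct global architecture: build the candidate $\mu_\vphi$ from weighted $(t,\delta)$-separated sets in $\mathcal{G}_t$, obtain a lower Gibbs bound on good segments from specification together with the Bowen property, and argue uniqueness via a Katok/Brin--Katok local-entropy comparison.

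Two steps that you treat as routine are in fact where the real work lies, and as written they have gaps. First, you invoke upper semicontinuity of the entropy map $\mu\mapsto h_\mu(\mathcal{F})$ to conclude that a weak-$*$ limit of your $(\mu_t)$ is an equilibrium state. That is not a hypothesis of the theorem, and it is false for general continuous flows on compact metric spaces; the Climenhaga--Thompson criterion is designed precisely to avoid assuming it. The role that $P_{\exp}^{\perp}(\vphi)<P(\vphi)$ plays is to substitute for semicontinuity: it controls the defect in local entropy at small scales so that the Gibbs/counting estimates force the limit to realize $P(\vphi)$. Your sentence ``upper semicontinuity of the entropy ... yields $h_{\mu_\vphi}+\int\vphi\,\dd\mu_\vphi=P(\vphi)$'' needs to be replaced by that argument. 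Second, your uniqueness step compares the lower Gibbs bound for $\mu_\vphi$ --- which you only establish for $(x,t)\in\mathcal{G}$ --- with a Brin--Katok asymptotic for a second ergodic equilibrium state $\nu$, which holds at $\nu$-generic points. Nothing in your sketch guarantees that $\nu$-generic orbit segments lie in $\mathcal{G}$, so the two bounds a priori live on disjoint sets and the claimed control on $d\nu/d\mu_\vphi$ has nowhere to apply. You need an intermediate lemma, driven again by condition (III), showing that $\nu$-generic segments are in $\mathcal{G}^{M}$ for some uniform $M$ (that is, the prefix and suffix of their decomposition have bounded length); otherwise the pressure contribution of $[\mathcal{P}]\cup[\mathcal{S}]$ would have to carry the full pressure, contradicting (III). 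Spelling out that bridge is essential, not bookkeeping.
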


\begin{rem}
\cite[Proposition 4.19]{Climenhaga:2016ut} points out that the unique
equilibrium state $\mu_{\vp}$ derived from the above theorem is ergodic. 
\end{rem}

\subsection{Gurevich pressure }

In this subsection, we introduce another well-studied notion of pressure,
the Gurevich pressure, that is, the growth rate of weighted periodic
orbits. In the uniformly hyperbolic setting, the Gurevich pressure
is equal to the topological pressure. However, it is not the case
for non-uniformly hyperbolic systems (see \cite{Gelfert:2014hn} for
more details). To make above discussion more precise, we shall define
the following relevant terms.

In what follows, let $M$ be a Riemannian manifold, ${\cal F}=(f_{t})_{t\in\R}$
be the geodesic flow on $T^{1}M$, and $\vp:T^{1}M\to\R$ be a continuous
potential. We denote the set of closed regular geodesics with length
in the interval $(a,b]$ by ${\rm Per}_{R}(a,b]$. For $\g\in{\rm Per}_{R}(a,b]$,
we define 
\[
\Phi(\g):=\int_{\g}\vp=\int_{0}^{|\g|}\vp(f_{t}v)dt
\]
 where $v\in T^{1}M$ is tangent to $\g$ and $|\g|$ is the length
of $\g$. Given $t,\Delta>0$, we define 
\[
\Lambda_{\Reg,\Delta}^{*}(\vp,t):=\sum_{\g\in{\rm Per}_{R}(t-\D,t]}e^{\Phi(\g)}.
\]

\begin{defn}
[Gurevich pressure] Given $\Delta>0$,

\begin{enumerate}[font=\normalfont]

\item The \textit{upper regular} \textit{Gurevich pressure} $\overline{P}_{\Reg,\D}^{*}$
of $\vp$ is defined as 

\[
\overline{P}_{\Reg,\D}^{*}(\vp):=\limsup_{t\to\infty}\frac{1}{t}\log\Lambda_{\Reg,\Delta}^{*}(\vp,t).
\]

\item The\textit{ lower} \textit{regular Gurevich pressure} $\underline{P}_{\Reg,\D}^{*}$
of $\vp$ is defined as 
\[
\underline{P}_{\Reg,\D}^{*}(\vp):=\liminf_{t\to\infty}\frac{1}{t}\log\Lambda_{\Reg,\Delta}^{*}(\vp,t).
\]
When $\overline{P}_{{\rm Reg},\D}^{*}(\vp)=\underline{P}_{\Reg,\D}^{*}(\vp)$,
we call this value the \textit{regular Gurevich pressure }and denote
it by $P_{\Reg,\D}^{*}(\vp)$.

\end{enumerate}
\end{defn}

\begin{rem}
Our \textit{upper} regular Gurevich pressure $\overline{P}_{\Reg,\D}^{*}$
is the regular Gurevich pressure $P_{{\rm Gur},{\cal R}}$ used in
\cite{Gelfert:2014hn}. Indeed, using the same argument as in \cite{Gelfert:2014hn},
one can show that $\overline{P}_{\Reg,\D}^{*}$ is independent of
$\D>0$. However, to derive the equidistribution property, we need
to take the lower regular Gurevich pressure into account (see Proposition
\ref{prop:pressure_equal_equidistribution}).
\end{rem}

\begin{defn}
For a potential $\vp:T^{1}M\to\R$, we say $\mu$ is a \textit{weak{*}
limit of $\vp-$weighted regular periodic orbits}, if there exists
$\D>0$ such that 
\[
\mu=\lim_{t\to\infty}\frac{\sum_{\g\in{\rm Per}_{R}(t-\D,t]}e^{\Phi(\g)}\delta_{\g}}{\Lambda_{\Reg,\Delta}^{*}(\vp,t)}
\]
 where $\delta_{\g}$ is the normalized Lebesgue measure along a periodic
orbit $\g$.
\end{defn}

In his proof of the variational principle in \cite[Theorem 9.10]{Walters:2000vc},
Walters pointed out a way to construct equilibrium states through
periodic orbits.

\begin{prop}
\cite[Theorem 9.10]{Walters:2000vc} \label{prop:walters_eq_states}Given
$\D>0$, suppose there exists $\{t_{k}\}$ such that 
\[
\lim_{k\to\infty}\frac{1}{t_{k}}\Lambda_{\Reg,\Delta}^{*}(\vp,t_{k})=P(\vp)
\]
 and 
\[
\lim_{k\to\infty}\frac{\sum_{\g\in{\rm Per}_{R}(t_{k}-\D,t_{k}]}e^{\Phi(\g)}\delta_{\g}}{\Lambda_{\Reg,\Delta}^{*}(\vp,t_{k})}=\mu,
\]
 then $\mu$ is an equilibrium state.
\end{prop}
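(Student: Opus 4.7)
The strategy is to follow Walters' proof of the variational principle \cite[Theorem 9.10]{Walters:2000vc}, replacing the atomic separated-set measures appearing there with Lebesgue measures along closed orbits. First, because each normalized Lebesgue measure $\delta_\g$ along a closed geodesic is itself $\mathcal{F}$-invariant, every convex combination
\[
\mu_k := \Lambda_{\Reg,\Delta}^*(\vp,t_k)^{-1}\sum_{\g\in{\rm Per}_{R}(t_k-\Delta,t_k]} e^{\Phi(\g)}\delta_\g
\]
is an $\mathcal{F}$-invariant Borel probability measure, and hence so is the weak$^*$ limit $\mu$. Since the variational principle automatically yields $h_\mu(\mathcal{F}) + \int\vp\,d\mu \leq P(\vp)$, the proposition reduces to establishing the matching lower bound $h_\mu(\mathcal{F}) + \int\vp\,d\mu \geq P(\vp)$.

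To produce the lower bound, I would pick one representative $v_\g$ from each closed regular orbit, form the atomic measure
\[
\sigma_k := \Lambda_{\Reg,\Delta}^*(\vp,t_k)^{-1}\sum_\g e^{\Phi(\g)}\delta_{v_\g},
\]
and note that the continuous-time orbit average $\frac{1}{t_k}\int_0^{t_k}(f_s)_*\sigma_k\,ds$ differs from $\mu_k$ in total variation by $O(\Delta/t_k)$, so both share $\mu$ as weak$^*$ limit. I would then choose $\ep>0$ smaller than an expansivity constant of the time-one map $f_1$ (available because the geodesic flow is entropy-expansive by \cite{Liu:2016de}) so that $\{v_\g\}$ is $(t_k-\Delta,\ep)$-separated, and fix a finite Borel partition $\mathcal{A}$ of $\ut$ with diameter less than $\ep$ and $\mu(\partial\mathcal{A})=0$. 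Each element of the refinement $\mathcal{A}^{n}:=\bigvee_{j=0}^{n-1}f_j^{-1}\mathcal{A}$ with $n=\lfloor t_k-\Delta\rfloor$ then contains at most one atom of $\sigma_k$, and the standard Gibbs-type computation yields
\[
H_{\sigma_k}(\mathcal{A}^{n}) + \int\Phi(\cdot,n)\,d\sigma_k \;=\; \log\Lambda_{\Reg,\Delta}^*(\vp,t_k) + O(1),
\]
the $O(1)$ absorbing the uniformly bounded discrepancy between $\Phi(\g)$ and $\Phi(v_\g,n)$. Invoking Walters' concavity lemma to pass from $\frac{1}{n}H_{\sigma_k}(\mathcal{A}^n)$ to the entropy of the time-averaged measure on $\mathcal{A}$, taking weak$^*$ limits, and using upper semi-continuity of the entropy map (cf.\ the remark following Theorem \ref{thm:C-T_criteria}) together with the hypothesis $\tfrac{1}{t_k}\log\Lambda_{\Reg,\Delta}^*(\vp,t_k)\to P(\vp)$, I obtain $h_\mu(\mathcal{F},\mathcal{A}) + \int\vp\,d\mu \geq P(\vp)$. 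Refining $\mathcal{A}$ to shrink its diameter then upgrades this to $h_\mu(\mathcal{F}) + \int\vp\,d\mu \geq P(\vp)$ and completes the proof.

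The main obstacle I anticipate is verifying the separation condition at a uniform scale $\ep$ independent of $k$: for Anosov flows this is immediate from global expansivity, but for surfaces without focal points the flow is only expansive on the regular set, so justifying that distinct closed regular orbits of comparable length have representatives that are $(t_k-\Delta,\ep)$-separated requires care, and is where one would invoke the entropy-expansivity constant of $f_1$ or the local product structure on $\Reg$. The continuous-time averaging needed to identify the weak$^*$ limit of the discrete-time Walters construction with $\mu$ amounts only to a boundary-term correction of size $O(1/t_k)$ and should not obstruct the limit passage.
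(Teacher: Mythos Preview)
The paper does not supply its own proof of this proposition; it simply attributes the result to Walters' proof of the variational principle and moves on. Your outline is precisely the Walters argument adapted to weighted periodic-orbit measures, so in substance you are reproducing what the citation is meant to invoke.

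Two remarks on the technical worries you flag. First, the separation of distinct regular closed geodesics at a uniform scale is not a delicate issue here and does not require entropy-expansivity: as the paper shows later (in the proof of Lemma~\ref{lem:regular_upper_bound}), any two closed regular geodesics that fail to be $(t,\delta)$-separated for $\delta$ below the injectivity radius are freely homotopic, hence their lifts are bi-asymptotic, and the Flat Strip Theorem forces them to be singular --- a contradiction. So you may take $\ep$ to be anything less than $\mathrm{inj}(S)$. Second, you do not need upper semi-continuity of the entropy map for the limit passage: once the partition $\mathcal{A}$ is chosen with $\mu(\partial\mathcal{A})=0$, weak$^*$ convergence already gives $H_{\mu_k}(\mathcal{A}^q)\to H_\mu(\mathcal{A}^q)$ for each fixed $q$, which is exactly what Walters' subadditivity argument requires. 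Entropy-expansivity enters only at the very end, to conclude $h_\mu(\mathcal{F},\mathcal{A})=h_\mu(\mathcal{F})$ once $\mathrm{diam}\,\mathcal{A}$ is below the expansivity constant, so that no further refinement of $\mathcal{A}$ is needed.
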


From the above observation we have:

\begin{prop}
\label{prop:pressure_equal_equidistribution}Given $\D>0$, suppose
$P_{\Reg,\D}^{*}(\vp)=P(\vp)$ and $\vp$ has a unique equilibrium
$\mu_{\vp}$, then $\mu_{\vp}$ is a weak{*} limit of $\vp-$weighted
regular closed geodesics.
\end{prop}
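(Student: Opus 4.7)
The plan is to use a standard compactness-plus-uniqueness argument. For every sufficiently large $t > 0$ (so that the denominator is positive, which is guaranteed by $P_{\Reg,\D}^{*}(\vp) = P(\vp) > -\infty$), define the probability measure
\[
\mu_{t} := \frac{\sum_{\g\in\mathrm{Per}_{R}(t-\D,t]} e^{\Phi(\g)}\delta_{\g}}{\Lambda_{\Reg,\D}^{*}(\vp,t)}.
\]
Since each $\delta_\g$ is the normalized Lebesgue measure along a closed ${\cal F}$-orbit (hence an ${\cal F}$-invariant probability measure), each $\mu_t$ is a convex combination of ${\cal F}$-invariant probability measures and therefore itself an ${\cal F}$-invariant probability measure on the compact metric space $T^1M$. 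The goal is to show that $\mu_t \to \mu_\vp$ in the weak$^*$ topology as $t \to \infty$.

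First I would argue by contradiction: suppose $\mu_t \not\to \mu_\vp$. Then there is a weak$^*$ open neighborhood $U$ of $\mu_\vp$ and a sequence $t_k \to \infty$ with $\mu_{t_k} \notin U$ for every $k$. By weak$^*$ compactness of the space of ${\cal F}$-invariant probability measures on $T^1M$, we may pass to a subsequence and assume $\mu_{t_k} \to \mu$ weakly for some ${\cal F}$-invariant probability measure $\mu \notin U$, in particular $\mu \neq \mu_\vp$.

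Next I would invoke the hypothesis $P_{\Reg,\D}^{*}(\vp) = P(\vp)$, which by definition means
\[
\lim_{t\to\infty}\frac{1}{t}\log\Lambda_{\Reg,\D}^{*}(\vp,t) = P(\vp),
\]
so along our subsequence, $\frac{1}{t_k}\log \Lambda_{\Reg,\D}^{*}(\vp,t_k) \to P(\vp)$ as well. Together with the weak$^*$ convergence of the weighted empirical measures, this verifies both hypotheses of Proposition \ref{prop:walters_eq_states}, so $\mu$ must be an equilibrium state for $\vp$. By the assumed uniqueness of the equilibrium state, $\mu = \mu_\vp$, contradicting $\mu \notin U$. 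Hence $\mu_t \to \mu_\vp$ weak$^*$, as desired.

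There is no real obstacle here: the argument is a clean application of Banach--Alaoglu together with Walters' construction and the given uniqueness. The only points requiring care are making sure $\mu_t$ is well-defined as a probability measure for all large $t$ (ensured by the pressure equality excluding $P(\vp) = -\infty$) and translating convergence along the continuous parameter $t$ into convergence along sequences, which is routine since any sequence $t_k \to \infty$ is covered by the argument above.
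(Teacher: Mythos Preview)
Your argument is correct and is precisely the standard compactness-plus-uniqueness deduction from Proposition~\ref{prop:walters_eq_states} that the paper has in mind; the paper does not spell out a proof but simply records the proposition as following ``from the above observation,'' and your writeup fills in exactly those details.
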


\section{Preliminaries of surfaces without focal points\label{sec:Preliminaries-of-geometry}}

\subsection{Geometry of Riemannian manifolds without focal points}

In this section, we recall relevant earlier results of manifolds without
focal points. These results can be found in \cite{Eberlein:1973hu,Pesin:1977vi,Eschenburg:1977kn,Burns:1983dw}.

Throughout this section $M$ denotes a closed $C^{\infty}$ Riemannian
manifold. The \textit{geodesics flow} ${\cal F}=(f_{t})_{t\in\R}$
over the unit tangent bundle $T^{1}M$ is the flow given by $f_{t}(v)=\dot{\g}_{v}(t)$
where $\gamma_{v}$ is the (unit speed) geodesic determined by the
initial vector $v\in T^{1}M$. Recall that for any Riemannian manifold,
we can naturally equip its tangent bundle with the \textit{Sasaki
metric. }In what follows, without stating specifically, the norm $||\cdot||$
always refers to the Sasaki metric. 

A \textit{Jacobi field} $J(t)$ along a geodesic $\g$ is a vector
field along $\g$ which satisfying the Jacobi equation: 
\[
J''(t)+K(\g(t))\dot{\g}(t)=0
\]
 where $K$ is the Gaussian curvature and $'$ denotes the covariant
derivative along $\g$. A Jacobi field is \textit{orthogonal} if both
$J$ and $J'$ are orthogonal to $\dot{\g}$ at some $t_{0}\in\R$
(and hence for all $t\in\R)$.

\begin{defn}
[No focal points] A Riemannian manifold $M$ has \textit{no focal
points} if for any initial vanishing Jacobi field $J(t)$, its length
$\left\Vert J(t)\right\Vert $ is strictly increasing. We say $M$
has\textit{ no conjugate points} if any non-zero Jacobi field has
at most one zero.
\end{defn}

\begin{rem}
There are other equivalent definitions for manifolds without focal
points, and many of their geometric features are introduced in \cite{doCarmo:2013tg}.
The following results are classical and relevant in our setting: \begin{enumerate}[font=\normalfont] 

\item Nonpositively curved $\subsetneq$ no focal points$\subsetneq$
no conjugate points.

\item One can find examples from each category above from \cite{Gulliver:1975gx},
as well as \cite{Gerber:2003go}, for examples in the above assertion.

\end{enumerate}
\end{rem}

It is a classical result that one can identify the tangent space of
$T^{1}M$ with the space of orthogonal Jacobi fields ${\cal J}$.
Moreover, one can use this relation to define three ${\cal F}$ invariant
bundles $E^{u},E^{c},$ and $E^{s}$ on $TT^{1}M$. To be more precise,
let us denote ${\cal J}(\g)$ the space of space of orthogonal Jacobi
fields along a geodesic $\g$. For any $v\in T^{1}M$, the identification
between $T_{v}T^{1}M$ and ${\cal J}(\g_{v})$ is given by 
\[
T_{v}T^{1}M\ni\xi\mapsto J_{\xi}\in{\cal J}(\g_{v})
\]
 where $J_{\xi}$ is the Jacobi field determined by the initial data
$\xi$. Moreover, we have 
\begin{equation}
\left\Vert df_{t}(\xi)\right\Vert ^{2}=\left\Vert J_{\xi}(t)\right\Vert ^{2}+\left\Vert J'_{\xi}(t)\right\Vert ^{2}.\label{eq:metric_jacobi_unit_tangent_bundle}
\end{equation}

We define ${\cal J}^{s}(\g)$ to be the space of \textit{stable (orthogonal)
Jacobi fields} as 
\[
{\cal J}^{s}(\g)=\{J(t)\in{\cal J}(v):\ \left\Vert J(t)\right\Vert \ {\rm is}\ {\rm bounded\ for}\ t\geq0\},
\]
and ${\cal J}^{u}(\g)$ to be the space of \textit{unstable (orthogonal)
Jacobi fields} as 
\[
{\cal J}^{s}(\g)=\{J(t)\in{\cal J}(v):\ \left\Vert J(t)\right\Vert \ {\rm is}\ {\rm bounded\ for}\ t\leq0\}.
\]
Using these two linear spaces of ${\cal J}(\g)$ and the identification,
we can define two subbundles $E^{s}(v)$ and $E^{u}(v)$ of $T_{v}T^{1}M$
as the following:

\begin{align*}
E^{s}(v):= & \{\xi\in T_{v}T^{1}M:\ J_{\xi}\in{\cal J}^{s}(v)\},\\
E^{u}(v):= & \{\xi\in T_{v}T^{1}M:\ J_{\xi}\in{\cal J}^{u}(v)\}.
\end{align*}

Last, we define $E^{c}(v)$ given by the flow direction.

\begin{defn}
[Rank]The \textit{rank} of a vector $v\in T^{1}M$ is the dimension
of the space of parallel Jacobi fields. We call $M$ is a \textit{rank
1} manifold if it has at least one rank 1 vector. 
\end{defn}

\begin{defn}
[Singular and Regular set]The \textit{singular set} $\Sing\subset T^{1}M$
is the set of vectors with rank greater than or equal to 2. The \textit{regular
set} $\Reg$ is the complement of $\Sing.$
\end{defn}

The following proposition summarizes known facts about manifolds with
no focal points.

\begin{prop}
\label{prop:no_focal_points} Let $M$ be a closed Riemannian manifold
without focal points. Then we have \begin{enumerate}[font=\normalfont] 

\item \cite{Hurley:1986km} The geodesic flow ${\cal F}$ is topologically
transitive.

\item\cite[Proposition 4.7, 6.2]{Pesin:1977vi} $\dim E^{u}(v)=\dim E^{s}(v)=n-1$,
and $\dim E^{c}(v)=1$ where $\dim M=n$. 

\item\cite[Theorem 4.11, 6.4]{Pesin:1977vi} The subbundles $E^{u}(v)$,
$E^{s}(v)$, $E^{cu}(v)$ and $E^{cs}(v)$ are ${\cal F}$--invariant
where $E^{cs}(v)=E^{c}(v)\oplus E^{s}(v)$ and $E^{cu}(v)=E^{c}(v)\oplus E^{u}(v)$.

\item\cite[Theorem 6.1, 6.4]{Pesin:1977vi} The subbundles $E^{u}(v)$,
$E^{s}(v)$, $E^{cu}(v)$ and $E^{cs}(v)$ are integrable to ${\cal F}$--invariant
foliations $W^{u}(v)$, $W{}^{s}(v)$, $W^{cu}(v)$ and $W^{cs}(v)$,
respectively. Moreover, $W^{u}(v)$ (resp. $W^{s}(v)$) consists of
vectors perpendicular to $H^{u}(v)$ (resp. $H^{s}(v)$) and toward
to the same side as $v$ (see below for the definition of the horospheres
$H^{s/u}(v)$). 

\item\cite[Lemma, p. 246]{Eschenburg:1977kn} $E^{u}(v)\cap E^{s}(v)\neq\emptyset$
if and only if $v\in{\rm Sing}$.

\item \cite[Theorem 1]{OSullivan:1976cf},\cite[Theorem 2]{Eschenburg:1977kn}
\label{prop:the flat strip theorem}The Flat Strip Theorem: suppose
$M$ is simply connected and geodesics $\g_{1},\g_{2}$ are bi-asymptotic
in the sense that $d(\g_{1}(t),\g_{2}(t))$ is uniformly bounded for
all $t\in\R$. Then $\g_{1}$ and $\g_{2}$ bound a strip of flat
totally geodesically immersed surface.

\item\cite[Corollary 3.3, 3.6]{Eberlein:1973hu} Suppose $\dim M=2$,
then $v\in\Sing$ if and only if $K(\pi f_{t}v)=0$ for all $t\in\R$
where $\pi:T^{1}M\to M$ is the canonical projection. 

\item \cite{Hopf:1948tn} Suppose $\dim M=2$, then $M$ is rank
1 if and only if its genus is at least 2.

\end{enumerate}
\end{prop}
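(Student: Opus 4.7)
My plan is to prove the proposition item by item, drawing on the cited classical sources and organized around a few recurring tools. The central technical device is the canonical identification of $T_{v}T^{1}M$ with the space $\mathcal{J}(\g_{v})$ of orthogonal Jacobi fields along $\g_{v}$ used in equation (3.1), which converts dynamical statements about $df_{t}$ into analytic statements about the growth of Jacobi fields. Once this identification is in hand, several items reduce to manipulations with the Jacobi equation under the no-focal-points hypothesis.

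Items (2), (3), and (5) follow a common template. For (2), I would construct stable Jacobi fields as limits of boundary value Jacobi fields $J_{T}$ with $J_{T}(0)=\xi$ and $J_{T}(T)=0$ as $T\to\infty$; the no-focal-points hypothesis guarantees these limits exist and span an $(n-1)$-dimensional subspace, with unstable fields defined symmetrically. Item (3) is immediate from the characterization of $\mathcal{J}^{s/u}$: being bounded on $[0,\infty)$ or $(-\infty,0]$ is preserved under time translation by the flow. For (5), a nonzero Jacobi field lying in $\mathcal{J}^{s}\cap\mathcal{J}^{u}$ is bounded on all of $\R$, and under no focal points such a field must be parallel, corresponding to a vector of rank at least $2$, i.e.\ $v\in\Sing$; the converse is immediate from the definition of rank.

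The most delicate items are (4) and (6). For (4), I would realize the stable and unstable foliations as limit leaves of geodesic spheres centered at $\pi\g_{v}(\pm t)$ as $t\to\pm\infty$; integrability then reduces to the $C^{1}$-regularity of horospheres, which Pesin and Eschenburg establish by using no focal points to control the Weingarten maps of the approximating spheres. Item (6), the Flat Strip Theorem, is the main obstacle and is where no focal points is genuinely stronger than no conjugate points: given two bi-asymptotic geodesics $\g_{1},\g_{2}$, one considers the family of minimizing segments joining $\g_{1}(t)$ and $\g_{2}(t)$ and uses the second variation formula together with the strict monotonicity of $\|J\|$ from no focal points to show that each intermediate curve is itself a geodesic at bounded distance from $\g_{1}$ and $\g_{2}$, sweeping out a flat totally geodesic strip.

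Items (1), (7), and (8) are quicker. Topological transitivity (1) follows from Hurley's closing argument, which combines denseness of orbits in the nonwandering set with the foliated structure from (4). For (7), in dimension two a rank-$2$ vector yields a nonzero parallel orthogonal Jacobi field $J = c\cdot n$ along $\g_{v}$, and the Jacobi equation then forces $K(\pi f_{t}v)\equiv 0$; the converse reverses this computation. Finally (8) is a Gauss--Bonnet argument: genus at least $2$ forces negative total Gaussian curvature, hence a point where $K<0$, which produces a rank-$1$ vector; conversely, a closed surface without focal points whose geodesic flow has no rank-$1$ vector must satisfy $K\equiv 0$, forcing genus at most $1$.
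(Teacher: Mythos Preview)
The paper does not prove this proposition at all: it is stated as a compendium of classical facts, each item carrying a citation to the original source (Hurley, Pesin, Eschenburg, O'Sullivan, Eberlein, Hopf), and no argument is given in the text. So there is nothing to compare your proposal against.

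That said, your outline is a faithful sketch of how the cited references actually proceed, and the organizing principle you chose---reducing everything to the identification $T_{v}T^{1}M\cong\mathcal{J}(\g_{v})$ and then exploiting the monotonicity of $\|J\|$ under no focal points---is exactly the right one. A couple of minor remarks: for item (1), Hurley's argument is not quite a closing lemma in the Anosov sense but rather uses the divergence of geodesics and the structure of the ideal boundary; and for item (8), your Gauss--Bonnet argument is correct but relies implicitly on Hopf's theorem that a torus without conjugate points is flat, which is the nontrivial content of the cited reference. If you intend to include proofs in your own write-up, the sketch you have would serve well as a roadmap, but for a paper following the conventions of this one, simply citing the sources as the authors do is standard and sufficient.
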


We shall introduce more metrics on $T^{1}M$ and the flow invariant
foliations induced in Proposition \ref{prop:no_focal_points} so that
we can perform finer analysis. We write $d_{{\rm S}}$ for the distance
function on $T^{1}M$ induced by the Sasaki metric on $TT^{1}M$.
Another handy distance function $d_{K}$ on $T^{1}M$, the \textit{Knieper
metric}, was introduced by Knieper in \cite{Knieper:1998ht}: 
\[
d_{K}(v,w):=\max\{d(\g_{v}(t),\g_{w}(t)):\ t\in[0,1]\}.
\]

It is not hard to see, $d_{{\rm S}}$ and $d_{K}$ are uniformly equivalent.
Thus, we will primarily work with the Knieper metric $d_{K}$ throughout
the paper. In particular, any Bowen ball $B_{t}(v,\ep)$ appearing
from here onward is with respect to the Knieper metric $d_{K}$, i.e.,
\[
B_{T}(v,\ep):=\{w\in T^{1}M:\ d_{K}(f_{t}w,f_{t}v)<\ep\ {\rm for}\ \mbox{{\rm all}\ }0\leq t\leq T\}.
\]

Furthermore, an \textit{intrinsic metric} $d^{s}$ on $W^{s}(v)$
for all $v\in T^{1}M$ is given by 
\[
d^{s}(u,w):=\inf\{l(\pi\g):\ \g:[0,1]\to W^{s}(v),\ \g(0)=u,\ \g(1)=w\}
\]
 where $l$ is the length of the curve in $M$, and the infimum is
taken over all $C^{1}$ curves $\g$ connecting $u,w\in W^{s}(v)$.
Using $d^{s}$ we can define the \textit{local stable leaf} through
$v$ of size $\rho$ as: 
\[
W_{\rho}^{s}(v):=\{w\in W^{s}(v):\ d^{s}(v,w)\leq\rho\}.
\]
 Moreover, we can locally define a similar intrinsic metric $d^{cs}$
on $W^{cs}(v)$ as: 
\[
d^{cs}(u,w)=|t|+d^{s}(f_{t}u,w)
\]
where $t$ is the unique time such that $f_{t}u\in W^{s}(w)$. This
metric $d^{cs}$ extends to the whole central stable leaf $W^{cs}(v)$.
We also define $d^{u},W_{\rho}^{u}(v),d^{cu}(v)$ analogously. Notice
that when $\rho$ is small these intrinsic metrics are uniformly equivalent
to $d_{{\rm S}}$ and $d_{K}$. A handy feature of these metrics is
that for $v\in T^{1}M$, $\sigma\in\{s,cs\}$ and for any $u,w\in W^{\sigma}$
the $t\mapsto d^{\sigma}(f_{t}u,f_{t}w)$ is a non-increasing function.
Indeed, it follows from the definition of manifolds with no focal
points. Similarly, for $\sigma\in\{u,cu\}$, $t\mapsto d^{\sigma}(f_{t}u,f_{t}w)$
is non-decreasing.

Following Proposition \ref{prop:no_focal_points}, one can define
the \textit{stable horosphere} $H^{s}(v)\subset M$ and the \textit{unstable
horosphere} $H^{u}(v)\subset M$ as the projection of the respective
foliations to $M$:
\[
H^{s}(v)=\pi(W^{s}(v))\mbox{\ and\ }H^{u}(v)=\pi(W^{s}(v)).
\]

We now summarize some useful properties of them.

\begin{prop}
\cite[Theorem 1 (i) (ii)]{Eschenburg:1977kn}\label{prop:horoshpere}
Let $M$ be a Riemannian closed manifolds without focal points. Then
we have \begin{enumerate}[font=\normalfont] 

\item$H^{u}(v)$, $H^{s}(v)$ are $C^{2}$-embedded hypersurfaces
when lifted to the universal cover $\widetilde{M}$. 

\item For $\sigma\in\{s,u\}$, the symmetric linear operator of ${\cal U}^{\sigma}(v):T_{\pi v}H^{\sigma}(v)\to T_{\pi v}H^{\sigma}(v)$
given by $v\mapsto\nabla_{v}N$, i.e., the shape operator on $H^{\sigma}(v)$,
is well-defined, where $N$ is the unit normal vector field on $H^{\sigma}(v)$
toward the same side as $v$ . 

\item${\cal U}^{u}$ is positively semidefinite and and ${\cal U}^{s}$
is negatively semidefinite.

\end{enumerate}
\end{prop}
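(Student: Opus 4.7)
The plan is to work in the universal cover $\widetilde{M}$, where the horospheres $H^u(v)$ and $H^s(v)$ are level sets of the Busemann functions $b_v(x) = \lim_{t \to \pm\infty}(d(x,\gamma_v(t)) \mp t)$. Under the no focal points hypothesis (which entails no conjugate points), the exponential map on $\widetilde{M}$ is a diffeomorphism, so the geodesic spheres $S_r(\gamma_v(t)) = \{x : d(x,\gamma_v(t)) = r\}$ are $C^\infty$ hypersurfaces, and each horosphere arises as a limit of such spheres as the center recedes to infinity along $\gamma_v$.

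For (1), I would show that $b_v$ is $C^2$. The approximating functions $b_{v,t}(x) := d(x,\gamma_v(t)) - t$ are smooth in a fixed neighborhood of $\pi v$ for all large $t$, since the radial geodesic from $\gamma_v(t)$ to points near $\pi v$ is unique, and their gradients $\nabla b_{v,t}$ converge uniformly on compact sets to a unit vector field. The shape operators $\mathcal{U}_{v,t}$ of the approximating spheres can be computed via Jacobi fields: if $J$ is a Jacobi field along the radial geodesic with $J(0) = 0$ and $J(r) = \xi$, then $\mathcal{U}_{v,t}(\xi) = J'(r)$. The analytic core is to prove this family converges as $t \to \infty$. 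In no focal points manifolds the norm $\|J(t)\|$ of any initially vanishing Jacobi field is strictly increasing, so the Riccati-type equation governing $\mathcal{U}_{v,t}$ yields a uniformly bounded monotone family; this monotone family converges, and the limit is the shape operator of the horosphere, giving the claimed $C^2$ regularity.

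For (2), once (1) is established, $\mathcal{U}^\sigma(v)$ is the shape operator of the embedded $C^2$ hypersurface $H^\sigma(v)$ at $\pi v$ with respect to the unit normal $N$ chosen on the same side as $v$, as specified in Proposition \ref{prop:no_focal_points}; symmetry is automatic from the $C^2$ structure. For (3), identify tangent vectors $\xi \in T_{\pi v}H^u(v)$ with unstable Jacobi fields $J$ along $\gamma_v$ via $J(0) = \xi$; under this identification, $\mathcal{U}^u(v)\xi = J'(0)$. The no focal points hypothesis, applied to unstable Jacobi fields (those bounded on $t \leq 0$), yields that $\|J(t)\|$ is non-decreasing in $t$, so $\left.\tfrac{d}{dt}\|J(t)\|^2\right|_{t=0} = 2\langle J(0), J'(0)\rangle \geq 0$, i.e., $\langle \xi, \mathcal{U}^u(v)\xi\rangle \geq 0$. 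The stable case follows by time reversal, using that stable Jacobi fields have non-increasing norms.

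The main obstacle will be the $C^2$ convergence in part (1). $C^1$ regularity of $b_v$ is comparatively routine because the gradients $\nabla b_{v,t}$ converge uniformly to a Lipschitz vector field. Upgrading to $C^2$ requires controlling the Hessians — equivalently the shape operators of the approximating spheres — uniformly as the centers recede, and it is precisely the no focal points hypothesis that furnishes this control: monotonicity of Jacobi field norms translates into monotonicity and uniform boundedness of the approximating Riccati operators, forcing them to converge. Under only a no conjugate points assumption (which would still make the approximating spheres smooth) this monotonicity would fail and the Riccati comparison would break down, which is what makes no focal points the essential geometric hypothesis in this argument.
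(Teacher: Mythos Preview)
The paper does not prove this proposition at all; it is quoted directly from Eschenburg \cite[Theorem 1 (i) (ii)]{Eschenburg:1977kn} and stated without proof. Your outline is essentially the standard argument (and indeed the one Eschenburg gives): approximate the horospheres by geodesic spheres centered at $\gamma_v(t)$, use the no focal points hypothesis to get monotonicity and uniform bounds on the Riccati operators of the approximating spheres, and pass to the limit to obtain $C^2$ regularity and the semidefiniteness of the limiting shape operators. So there is nothing to compare against in the present paper, but your sketch matches the approach in the cited source.
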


We are ready to rephrase above two propositions specific to the surface
setting. From now on, we denote $S$ a closed Riemannian surface of
genus greater than or equal to 2 and has no focal point. Then from
Proposition \ref{prop:no_focal_points} and \ref{prop:horoshpere}
we have:  

\begin{itemize}

\item $S$ is rank 1.

\item For $v\in T^{1}S$, $H^{u}(v)$ (resp., $H^{s}(v)$) is one
dimensional and called the \textit{unstable} (resp., \textit{stable})
\textit{horocycle}. 

\item The (one dimensional) linear operator ${\cal U}^{u}(v):T_{\pi v}H^{u}(v)\to T_{\pi v}H^{u}(v)$
is the given by the \textit{geodesic curvature} $k^{u}(v)$ of the
horocycle of $H^{u}(v)$ at $\pi v$. More precisely, for all $w\in T_{\pi v}H^{u}(v)$
\[
{\cal U}^{u}(v)(w)=k^{u}(v)(w).
\]

\item Similarly, ${\cal U}^{s}(v)$ is given by $k^{s}(v)$ the \textit{geodesic
curvature} $k^{s}(v)$ of the horocycle of $H^{s}(v)$ at $\pi v$,
i.e., ${\cal U}^{s}(v)(w)=-k^{s}(v)(w)$ for all $w\in T_{\pi v}H^{s}(v)$.
Moreover, we have $k^{s}(-v)=k^{u}(v)$. 

\end{itemize}

\subsection{Hyperbolicity indices $\lambda$ and $\lt$\label{sub:Hyperbolicity-indices-}}

In this subsection, using $k^{s}$ and $k^{u}$ we introduce several
useful functions to quantify the hyperbolicity for any $v\in T^{1}S$.
These hyperbolicity indices will be used in Section \ref{sec:Decompositions-for-finite}
to derive the decomposition for orbit segments.

\begin{defn}
For $v\in T^{1}S$ and for any $T>0$, we define:

\begin{enumerate}[font=\normalfont] 

\item $\lambda(v):=\min(k^{u}(v),k^{s}(v)).$

\item$\lambda_{T}(v):=\int_{-T}^{T}\lambda(f_{\tau}v)d\tau$.

\end{enumerate}
\end{defn}

\begin{rem}
$\ $\begin{enumerate}[font=\normalfont]

\item Since the horocycles are $C^{2}$ (by Proposition \ref{prop:horoshpere}),
we have $k^{s}$ and $k^{u}$ are continuous, and so are $\lambda$
and $\lt$. 

\item The $\lambda$ defined in this paper is exactly the same as
the $\lambda$ introduced in \cite{BCFT2017}.

\end{enumerate}
\end{rem}

\begin{rem}
$\negmedspace$The most significant difference between the ``nonpositively
curved'' setting in \cite{BCFT2017} and our ``no focal points''
setting is that the norm of the Jacobi fields are not strictly convex
in the no focal points setting. As one can observe in \cite{BCFT2017},
the convexity of Jacobi fields implies good estimates on $\lambda$
so that one can use $\lambda$ to characterize the singular set. However,
in our setting, $\lambda$ does not enjoy such properties. Therefore,
we need to accumulate more hyperbolicity through integrating $\lambda$
for a longer time $T$; indeed, this is our motivation for introducing
a new function $\lambda_{T}$.
\end{rem}

The following proposition and lemma establish relations between horocycles
and related Jacobi fields. The version we state below is from \cite{BCFT2017}.

\begin{prop}
\label{prop:k^u_and_Coddazi_eq}Let $\g_{v}(t)$ be a unit speed geodesic
such that $\dot{\g}_{v}(0)=v$, and $J^{u}$ be the $H^{u}(v)-$Jacobi
field along $\g_{v}$, that is, the Jacobi field derived by varying
through geodesics perpendicular to $H^{u}(v)$. Then $J^{u}\in{\cal J}^{u}$
and $(J^{u})'(t)=k^{u}(f_{t}v)J^{u}(t)$ for all $t\in\R$. Similarly,
for $J^{s}$ the $H^{s}(v)-$Jacobi field along $\g_{v}$, we have
$J^{s}\in{\cal J}^{s}$ and $(J^{s})'(t)=-k^{s}(f_{t}v)J^{s}(t)$
for all $t\in\R$.
\end{prop}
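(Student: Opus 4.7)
The plan is to realize $J^u$ explicitly as the variation field of a family of geodesics perpendicular to a common unstable horocycle at each fixed time, and then read off the ODE $(J^u)'=k^u J^u$ from the definition of the shape operator. To set this up I would pick a smooth curve $s\mapsto v_s\in W^u(v)$ with $v_0=v$, and let $\a(s,t):=\g_{v_s}(t)$, so that $J^u(t)=\pa_s\a(s,t)\big|_{s=0}$ is the desired Jacobi field. The key geometric input is Proposition~\ref{prop:no_focal_points}(4): each $v_s$ is, by definition of $W^u(v)$, the unit normal to $H^u(v)$ at $\pi v_s$ pointing to the $v$-side, and the $\F$-invariance of $W^u$ gives $f_t v_s\in W^u(f_t v)$ for every $t\in\R$. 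Consequently $H^u(f_t v_s)$ coincides with $H^u(f_t v)$, and $\pa_t\a(s,t)=\dot{\g}_{v_s}(t)$ is its unit normal at $\a(s,t)$ on the correct side.

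Let $N$ denote this unit normal field, defined on a neighborhood of $\pi f_t v$ inside $H^u(f_t v)$. The previous paragraph identifies $\pa_t\a(s,t)=N(\a(s,t))$, and torsion-freeness of the Levi-Civita connection gives $\nabla_{\pa_t\a}\pa_s\a=\nabla_{\pa_s\a}\pa_t\a$, so evaluating at $(s,t)=(0,t)$ yields
\[
(J^u)'(t)=\nabla_{\pa_s\a}\pa_t\a\big|_{(0,t)}=\nabla_{J^u(t)}N={\cal U}^u(f_t v)\bigl(J^u(t)\bigr).
\]
Since $\dim S=2$, the tangent line $T_{\pi f_t v}H^u(f_t v)$ is one-dimensional and ${\cal U}^u(f_t v)$ acts on it by multiplication by the geodesic curvature $k^u(f_t v)$, so $(J^u)'(t)=k^u(f_t v)J^u(t)$ as claimed.

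For the ${\cal J}^u$ membership I would write $J^u(t)=\phi(t)E(t)$ with $E$ a parallel unit field perpendicular to $\dot{\g}_v$; the ODE reduces to $\phi'(t)=k^u(f_t v)\phi(t)$, with solution $\phi(t)=\phi(0)\exp\bigl(\int_0^t k^u(f_\tau v)\,d\tau\bigr)$. By Proposition~\ref{prop:horoshpere}(3), $k^u\geq 0$, so $|\phi|$ is non-decreasing and in particular $\|J^u(t)\|\leq\|J^u(0)\|$ for $t\leq 0$, placing $J^u$ in ${\cal J}^u$. The stable case is identical with $H^s$ in place of $H^u$: the sign convention ${\cal U}^s=-k^s$ produces the minus sign in $(J^s)'(t)=-k^s(f_t v)J^s(t)$, and since $k^s\geq 0$ the corresponding $|\phi|$ is non-increasing, so $\|J^s(t)\|$ stays bounded for $t\geq 0$ and $J^s\in{\cal J}^s$.

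The main obstacle is the compatibility step in the first paragraph: one must legitimately substitute $\pa_t\a=N$ along the full variation, not only at $s=0$. This is precisely the assertion that the characterization of $W^u(v)$ as perpendicular vectors to $H^u(v)$ is preserved by the flow, i.e.\ the conjunction of Proposition~\ref{prop:no_focal_points}(4) with the $\F$-invariance of $W^u$ recorded there. Once this compatibility is in hand, the remainder is a one-line shape-operator computation followed by integrating a scalar ODE.
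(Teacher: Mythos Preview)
Your proof is correct and follows essentially the same approach as the paper's: both set up the variation through geodesics normal to $H^u(v)$, invoke the flow-invariance of $W^u$ from Proposition~\ref{prop:no_focal_points}(4) to extend the normal identification to all $t$, and then read off the ODE from the shape operator via torsion-freeness of the Levi-Civita connection. Your version is in fact slightly more complete, since you explicitly verify $J^u\in\mathcal{J}^u$ (and $J^s\in\mathcal{J}^s$) by integrating the scalar ODE and using $k^u\geq 0$, whereas the paper's proof leaves this membership unargued.
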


\begin{proof}
Let $\alpha(s,t)$ for $(s,t)\in(-\ep,\ep)\times\R$ be the variation
of geodesics along $H^{u}(v)$, i.e., $\alpha(0,t)=\g_{v}(t)$ and
$\alpha(s,0)\in H^{u}(v)$, such that $\left.\frac{\partial}{\partial s}\alpha(s,t)\right|_{s=0}=J^{u}(t)$.
Then when $t=0$ 
\begin{align*}
(J^{u})'(0)= & \left.\frac{\nabla}{\partial t}\frac{\partial}{\partial s}\alpha(s,t)\right|_{s=0,t=0}=\left.\frac{\nabla}{\partial s}\frac{\partial}{\partial t}\alpha(s,t)\right|_{s=0,t=0}\\
= & \nabla_{J^{u}(0)}N={\cal U}^{u}(v)(J(0))=k^{u}(v)J(0)
\end{align*}
where the second equality is by the symmetry of the Levi-Civita connection
and the last equality follows from Proposition \ref{prop:horoshpere}.

To see this is true for all $t$, we notice that the flow invariant
unstable manifold $W^{u}(v)$ consists of vectors which are perpendicular
to $H^{u}(v)$ and point toward to the same side as $v$ (cf. Proposition
\ref{prop:no_focal_points}). That is, when we vary geodesics perpendicularly
along $H^{u}(v),$ these geodesics vary perpendicularly along $H^{u}(f_{t}v)$
as well. Thus, $J^{u}(t)$ is the Jacobi field derived by varying
geodesics perpendicular to $H^{u}(f_{t}v)$, and we have $(J^{u})'(t)=k^{u}(f_{t}v)J^{u}(t)$
by repeating the computation above. For $J^{s}$, the same argument
applies.
\end{proof}

Let $\Lambda$ be the maximum eigenvalue of $k^{u}(v)$ over all $v\in T^{1}S$.
Then for $\sigma\in\{s,u\}$ we have $\left\Vert (J^{\sigma})'(t)\right\Vert \leq\Lambda\left\Vert J^{\sigma}(t)\right\Vert $
for all $t$. By equation (\ref{eq:metric_jacobi_unit_tangent_bundle})
and the above proposition, for $\xi\in E^{u}(v)$ or $E^{s}(v)$ we
have 
\[
||J_{\xi}(t)||^{2}\leq||df_{t}\xi||^{2}\leq(1+\Lambda^{2})||J_{\xi}(t)||^{2}.
\]

\begin{lem}
\cite[Lemma 2.11]{BCFT2017} \label{lem:2.9}Let $v\in T^{1}S$ and
$J^{u}$ (resp. $J^{s}$) be an unstable (resp. stable) Jacobi field
along $\gamma_{v}$. Then, for , 
\begin{equation}
||J^{u}(t)||\geq e^{\int_{0}^{t}k^{u}(f_{\tau}v)d\tau}||J^{u}(0)||{\rm \ }and\ ||J^{s}(t)||\leq e^{-\int_{0}^{t}k^{s}(f_{\tau}v)d\tau}||J^{s}(0)||.
\end{equation}
 
\end{lem}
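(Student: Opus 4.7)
The plan is to reduce both inequalities to the horospheric Jacobi fields, for which Proposition~\ref{prop:k^u_and_Coddazi_eq} already supplies the exact linear ODE. First I would observe that on a surface, Proposition~\ref{prop:no_focal_points}(2) gives $\dim E^u(v)=\dim E^s(v)=1$, so the space ${\cal J}^u(\gamma_v)$ of unstable orthogonal Jacobi fields along $\gamma_v$ is one-dimensional, and similarly for ${\cal J}^s(\gamma_v)$. Hence every $J^u\in{\cal J}^u(\gamma_v)$ is a scalar multiple of the horospheric Jacobi field $J^u_H$ produced in Proposition~\ref{prop:k^u_and_Coddazi_eq}, and likewise for $J^s$.

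For the unstable estimate, I would fix a parallel unit vector field $e(t)$ along $\gamma_v$ orthogonal to $\dot\gamma_v$, and write $J^u(t)=j(t)\,e(t)$. The identity $(J^u_H)'(t)=k^u(f_t v)J^u_H(t)$ from Proposition~\ref{prop:k^u_and_Coddazi_eq} carries over by linearity to $J^u$, and since $e$ is parallel it reduces to the scalar ODE $j'(t)=k^u(f_t v)j(t)$. Integrating gives
\begin{equation*}
  \|J^u(t)\| \;=\; |j(t)| \;=\; |j(0)|\, e^{\int_0^t k^u(f_\tau v)\,d\tau} \;=\; \|J^u(0)\|\, e^{\int_0^t k^u(f_\tau v)\,d\tau},
\end{equation*}
which is in particular $\geq$ as claimed. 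Alternatively, one can avoid choosing a parallel frame by differentiating $\|J^u(t)\|^2$ directly: $\tfrac{d}{dt}\|J^u(t)\|^2 = 2\langle J^u,(J^u)'\rangle = 2k^u(f_tv)\|J^u(t)\|^2$, and then solving the resulting scalar ODE in $\|J^u(t)\|^2$.

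The stable estimate is strictly parallel, using the sign-flipped relation $(J^s_H)'(t)=-k^s(f_t v)J^s_H(t)$ from Proposition~\ref{prop:k^u_and_Coddazi_eq}; the same scalar-ODE argument yields $\|J^s(t)\|=\|J^s(0)\|e^{-\int_0^t k^s(f_\tau v)\,d\tau}$, hence the required $\leq$. In truth there is no real obstacle once Proposition~\ref{prop:k^u_and_Coddazi_eq} is in hand; the only point requiring care is that the ODE established there for the specific horosphere Jacobi field extends to \emph{every} element of ${\cal J}^u$ (resp.\ ${\cal J}^s$), and this is precisely where the surface hypothesis enters via one-dimensionality. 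The fact that one obtains equalities rather than merely inequalities is a feature of the two-dimensional setting; stating the lemma with $\geq$ and $\leq$ keeps it in a form that would also survive in higher dimensions, where only the horospheric direction can be controlled this directly.
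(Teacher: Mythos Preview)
Your proof is correct. The paper does not supply its own proof of this lemma but simply cites \cite[Lemma~2.11]{BCFT2017}; your reduction via Proposition~\ref{prop:k^u_and_Coddazi_eq} together with the one-dimensionality of ${\cal J}^{u}$ and ${\cal J}^{s}$ on a surface is exactly the intended argument, and as you note it in fact yields equality in both estimates.
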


A handy lemma for computation:

\begin{lem}
\label{alglem} Let $\psi:\mathbb{R}\rightarrow\mathbb{R}$ be a non-negative
integrable function and
\[
\mbox{\ensuremath{\psi}}_{T}(t):=\int_{-T}^{T}\psi(t+\tau)d\tau.
\]
 Then, for every $a\leq b$, 
\[
\int_{a}^{b}\psi_{T}(t)dt\leq2T\int_{a-T}^{b+T}\psi(t)dt.
\]
Moreover, we have 
\[
\frac{1}{2T}\int_{0}^{t}\lambda_{T}(f_{\tau}v)d\tau-2T\Lambda\leq\int_{0}^{t}\lambda(f_{\tau}v)d\tau
\]
where $\Lambda=\max_{v\in T^{1}S}\lambda(v).$\end{lem}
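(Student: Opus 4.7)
The plan is to prove the two inequalities by a direct application of Fubini's theorem and then a bound on the boundary terms that appear when we adjust the domain of integration.

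For the first inequality, I would start by expanding the definition of $\psi_{T}$ and swapping the order of integration:
\[
\int_{a}^{b}\psi_{T}(t)\,dt=\int_{a}^{b}\!\int_{-T}^{T}\psi(t+\tau)\,d\tau\,dt=\int_{-T}^{T}\!\int_{a+\tau}^{b+\tau}\psi(s)\,ds\,d\tau.
\]
Since $\psi\geq 0$ and for every $\tau\in[-T,T]$ the interval $[a+\tau,b+\tau]$ is contained in $[a-T,b+T]$, the inner integral is bounded above by $\int_{a-T}^{b+T}\psi(s)\,ds$, which is independent of $\tau$. Integrating this uniform bound over the $\tau$-variable of length $2T$ yields the desired inequality.

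For the second inequality, I would apply the first inequality with $\psi(\tau):=\lambda(f_{\tau}v)$, $a=0$, $b=t$, noting that with this choice $\psi_{T}(\tau)=\lambda_{T}(f_{\tau}v)$ by the definition in Section~\ref{sub:Hyperbolicity-indices-}. This gives
\[
\int_{0}^{t}\lambda_{T}(f_{\tau}v)\,d\tau\leq 2T\int_{-T}^{t+T}\lambda(f_{\tau}v)\,d\tau.
\]
To remove the extra $[-T,0]$ and $[t,t+T]$ pieces, I would split the right-hand integral into three parts and use the pointwise bound $\lambda\leq\Lambda$ on the two flanking intervals of length $T$, each contributing at most $T\Lambda$. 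Rearranging and dividing by $2T$ gives
\[
\frac{1}{2T}\int_{0}^{t}\lambda_{T}(f_{\tau}v)\,d\tau-2T\Lambda\leq\int_{0}^{t}\lambda(f_{\tau}v)\,d\tau,
\]
as claimed. There is no substantive obstacle here; the only thing to be careful about is that $\lambda$ is non-negative (this follows from $k^{s},k^{u}\geq 0$, which is exactly the content of Proposition~\ref{prop:horoshpere}(3)) so that the Fubini--Tonelli swap and the monotonicity of the integral in the first step are both justified.
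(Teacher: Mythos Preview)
Your proof is correct. For the first inequality, the paper proceeds differently: it splits into the cases $b-a\leq 2T$ and $b-a\geq 2T$ and in each case rewrites the double integral as $\int_{a-T}^{b+T} w(\tau)\psi(\tau)\,d\tau$ for an explicit piecewise-linear weight $w$, then bounds $w\leq 2T$. Your Fubini-and-monotonicity argument (enlarging each inner interval $[a+\tau,b+\tau]$ to $[a-T,b+T]$) reaches the same conclusion without any case distinction, so it is a genuine simplification. For the second inequality your argument and the paper's are essentially identical: both apply the first part with $\psi(\tau)=\lambda(f_\tau v)$, $a=0$, $b=t$, and then absorb the two boundary intervals of length $T$ using $\lambda\leq\Lambda$.
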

\begin{proof}
For $b-a\leq2T$, 
\begin{eqnarray*}
\int_{a}^{b}\psi_{T}(t)dt & = & \int_{a}^{b}\int_{-T}^{T}\psi(t+\tau)d\tau dt\\
 & = & \int_{a-T}^{b-T}(\tau+T-a)\psi(\tau)d\tau+\int_{b-T}^{a+T}(b-a)\psi(\tau)d\tau+\int_{a+T}^{b+T}(b+T-\tau)\psi(\tau)d\tau\\
 & \leq & (b-a)\int_{a-T}^{b-T}\psi(\tau)d\tau+(b-a)\int_{b-T}^{a+T}\psi(\tau)d\tau+(b-a)\int_{a+T}^{b+T}\psi(\tau)d\tau\\
 & = & (b-a)\int_{a-T}^{b+T}\psi(\tau)d\tau\leq2T\int_{a-T}^{b+T}\psi(\tau)d\tau.
\end{eqnarray*}
For $b-a\geq2T$, 
\begin{eqnarray*}
\int_{a}^{b}\psi_{T}(t)dt & = & \int_{a}^{b}\int_{-T}^{T}\psi(t+\tau)d\tau dt\\
 & = & \int_{a-T}^{a+T}(\tau+T-a)\psi(\tau)d\tau+\int_{a+T}^{b-T}2T\psi(\tau)d\tau+\int_{b-T}^{b+T}(s+T-\tau)\psi(\tau)d\tau\\
 & \leq & 2T\int_{a-T}^{a+T}\psi(\tau)d\tau+2T\int_{a+T}^{b-T}\psi(\tau)d\tau+2T\int_{b-T}^{b+T}\psi(\tau)d\tau=2T\int_{a-T}^{b+T}\psi(\tau)d\tau.
\end{eqnarray*}

The last assertion follows from 

\begin{eqnarray*}
\int_{0}^{t}\lambda(f_{\tau}v)d\tau & = & \int_{-T}^{T+t}\lambda(f_{\tau}v)d\tau-\int_{-T}^{0}\lambda(f_{\tau}v)d\tau-\int_{t}^{T+t}\lambda(f_{\tau}v)d\tau\\
 & \geq & \int_{-T}^{T+t}\lambda(f_{\tau}v)d\tau-2T\Lambda\\
 & \geq & \frac{1}{2T}\int_{0}^{t}\lambda_{T}(f_{\tau}v)d\tau-2T\Lambda.
\end{eqnarray*}

\end{proof}

\section{A decompositions of finite orbit segments\label{sec:Decompositions-for-finite}}

We retain the same notations as previous sections and denote a potential
by $\vp:T^{1}S\to\R$.

\subsection{$\Sing$, $\lambda,\lt$, and decompositions}

In this subsection, we discuss a decomposition given by $\lt$. This
decomposition will allow us to apply the Climenhaga-Thompson criteria
(i.e., Theorem \ref{thm:C-T_criteria}) to prove the uniqueness of
equilibrium states.

\begin{defn}
[Good orbits and Bad orbits]\label{def:decomposition}For any $T,\eta>0$,
we define the two collections of finite orbit segments ${\cal G}_{T}(\eta),{\cal B}_{T}(\eta)\subset T^{1}S\times[0,\infty)$
using $\lambda_{T}$: 
\begin{align*}
{\cal G}_{T}(\eta):= & \{(v,t)\colon\ \int_{0}^{\tau}\lt(f_{\theta}v)d\theta\geq\tau\eta{\rm \ {\rm and\ }}\int_{0}^{\tau}\lt(f_{-\theta}f_{t}v)d\theta\geq\tau\eta\ \forall\tau\in[0,t]\},\\
{\cal B}_{T}(\eta):= & \{(v,t)\subset T^{1}S\times[0,\infty)\colon\ \int_{0}^{t}\lt(f_{\theta}v)d\theta<t\eta\}.
\end{align*}

\end{defn}

Using ${\cal G}_{T}$ and ${\cal B}_{T}(\eta)$, one can define a
orbit decomposition $({\cal P},{\cal G},{\cal S})=({\cal B}_{T}(\eta),{\cal G}_{T}(\eta),{\cal B}_{T}(\eta))$.
More precisely, we define three maps $p,g,s:T^{1}S\times[0,\infty)\to\R$
as follows. For any $(v,t)$ is a finite orbit segment, $p=p(v,t)$
is the largest time such that $(v,p)\in{\cal B}_{T}(\eta)$, $s=s(v,t)\in[0,t-p]$
is the largest time such that $(f_{t-s}v,s)\in{\cal B}_{T}(\eta)$,
and $g=g(v,t)=t-s-p$ is the remaining time in the middle. It is not
hard to see $(v,p)\in{\cal B}_{T}(\eta),$ $(f_{p}v,g)\in{\cal G}_{T}(\eta)$,
and $(f_{p+s}v,s)\in{\cal B}_{T}(\eta).$

\begin{figure}[H]

\begin{center}\includegraphics[scale=0.7]{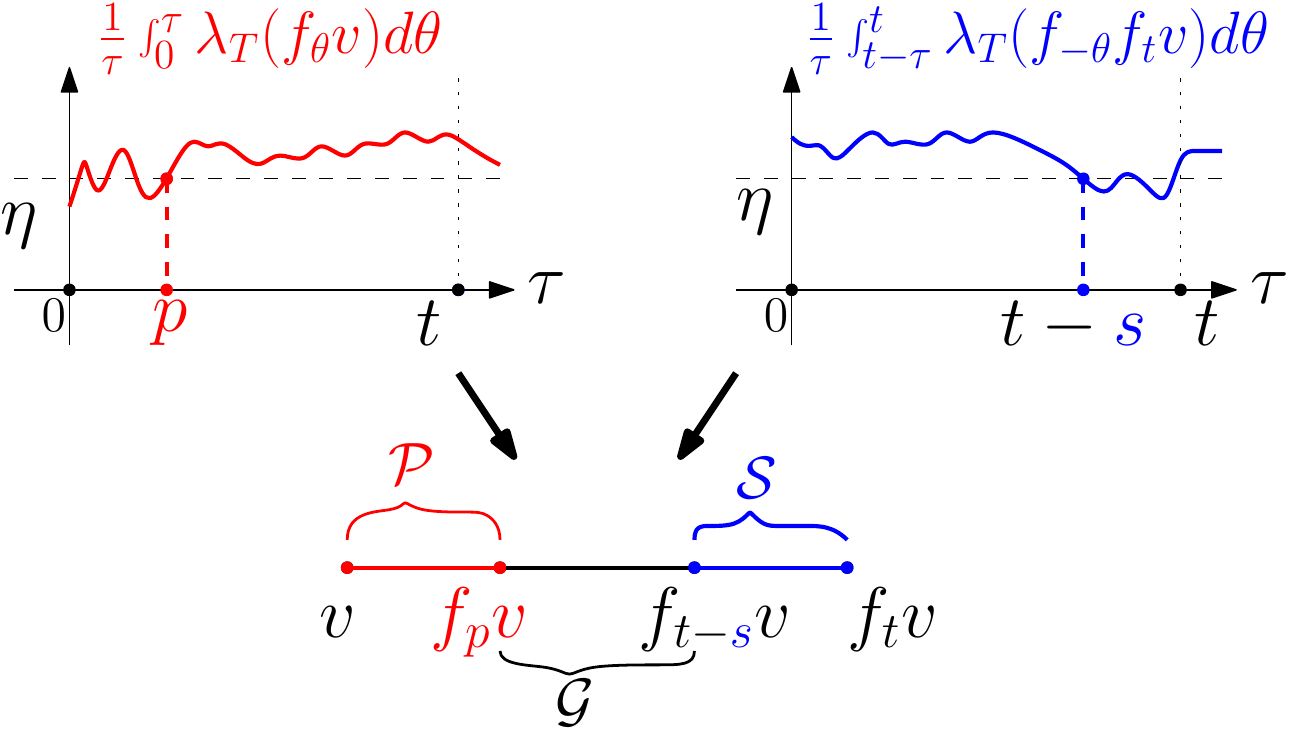}\end{center}

\caption{Orbit Decomposition}

\end{figure}

\begin{prop}
We have: \begin{enumerate}[font=\normalfont]

\item ${\rm Sing}$ is closed and flow invariant.

\item $\gt(\eta)\subset T^{1}S\times\mathbb{R}$ is closed.

\item ${\rm Reg}$ is dense in $T^{1}S$. 

\end{enumerate}
\end{prop}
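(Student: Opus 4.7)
The proposition breaks into three essentially independent statements, and I would handle them in the order (1), (2), (3) since each uses the previous.

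For part (1), I would exploit the characterization in Proposition 3.5(7) that, for surfaces, $\mathrm{Sing}=\{v\in T^1S:K(\pi f_t v)=0\text{ for all }t\in\mathbb{R}\}$. Writing this as an intersection $\mathrm{Sing}=\bigcap_{t\in\mathbb{R}}(K\circ\pi\circ f_t)^{-1}(\{0\})$ shows it is a closed set, because each factor is the preimage of the closed set $\{0\}$ under a continuous map. Flow invariance is immediate from the defining condition after the change of variable $t\mapsto t+s$.

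For part (2), the key fact is that the map $(v,\tau)\mapsto\int_0^\tau\lambda_T(f_\theta v)\,d\theta$ is continuous, since $\lambda_T$ is continuous by the remark following its definition in Section 3.2. The analogous continuity holds for $(v,t,\tau)\mapsto\int_0^\tau\lambda_T(f_{-\theta}f_t v)\,d\theta$. Given a sequence $(v_n,t_n)\in\mathcal{G}_T(\eta)$ converging to $(v,t)$, I would fix any $\tau\in[0,t)$: for $n$ sufficiently large $\tau\le t_n$, so the defining inequalities hold for $(v_n,t_n)$ at that $\tau$, and continuity lets us pass to the limit. The endpoint $\tau=t$ is then obtained by letting $\tau\nearrow t$ inside the inequality for $v$, again using continuity of the integral in $\tau$.

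For part (3), I would combine three inputs already available in Section 3: topological transitivity of the geodesic flow (Proposition 3.5(1)), the rank-1 property of $S$ in genus $\ge 2$ (Proposition 3.5(8)), and the closedness plus flow invariance of $\mathrm{Sing}$ proved in part (1). By transitivity, there exists $v_1\in T^1S$ whose forward orbit $\{f_tv_1:t\in\mathbb{R}\}$ is dense. If $v_1$ lay in $\mathrm{Sing}$, then flow invariance would put the entire orbit in $\mathrm{Sing}$, and closedness would force $\mathrm{Sing}=T^1S$, contradicting the existence of a rank-1 vector. Hence $v_1\in\mathrm{Reg}$, and since $\mathrm{Reg}$ is flow invariant as the complement of $\mathrm{Sing}$, the dense orbit lies in $\mathrm{Reg}$, proving density.

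None of the three parts is genuinely hard; the only minor subtlety is the endpoint $\tau=t$ argument in (2), which would be the one place I would double-check by explicitly invoking continuity of $\tau\mapsto\int_0^\tau\lambda_T(f_\theta v)\,d\theta$ rather than the stronger joint continuity.
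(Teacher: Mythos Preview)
Your proof is correct and follows essentially the same approach as the paper, which dismisses (1) and (2) as ``rather straightforward from their definitions (notice that $\lambda_T$ is continuous)'' and gives for (3) precisely your transitivity-plus-flow-invariance argument. Your write-up simply fills in the details the paper omits; the only cosmetic slip is calling $\{f_tv_1:t\in\mathbb{R}\}$ a ``forward orbit'' when you visibly mean the full orbit.
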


\begin{proof}
These assertions are rather straightforward from their definitions
(notice that $\lt$ is continuous). Nevertheless, we elaborate a little
more on the last one since it is less obvious than others. Notice
that the geodesic flow is topologically transitive (see Proposition
\ref{prop:no_focal_points}), so there exists a dense orbit $\g\subset T^{1}S$.
Since $\Reg$ is an open set, there exists $t\in\R$ such that $\g(t)\in\Reg$,
and which implies that $\g\subset\Reg$ because $\Sing$ is flow invariant. 
\end{proof}

\subsection{Uniform estimates on ${\cal G}_{T}(\eta)$}

From the compactness of $T^{1}S$, the function $k^{s},k^{u},\lambda$
and $\lt$ are uniformly continuous, so given $\frac{\eta}{2}>0$,
there exists $\delta=\delta(\eta)$ such that when $d_{K}(v,w)<\delta$,
we have 
\[
|\Theta(v)-\Theta(w)|<\frac{\eta}{2}
\]
 where $\Theta:T^{1}S\to\R$ is one of $\lambda,$ $\lt$, $k^{u}$
and $k^{s}$. Also, define $\widetilde{\Theta}(v)=\max\{0,\Theta(v)-\frac{\eta}{2}\}$.
Then we have for $w\in B_{t}(v,\delta)$, then 

\begin{align}
\int_{0}^{t}\Theta(f_{\tau}w)d\tau\geq\int_{0}^{t}\widetilde{\Theta}(f_{\tau}v)d\tau & \geq\int_{0}^{t}\Theta(f_{\tau}v)d\tau-\frac{\eta}{2}t\label{eq:k_T_and_k_T_bar}
\end{align}
where, again, $\Theta\in\{\lambda,\lt,k^{s},k^{u}\}$.

Repeat the computation in Lemma \ref{alglem}, we have 
\begin{equation}
\int_{0}^{t}\widetilde{\lambda}(f_{\tau}v)d\tau\geq\frac{1}{2T}\int_{0}^{t}\widetilde{\lt}(f_{\tau}v)d\tau-2T\cdot\max\{0,\Lambda-\frac{\eta}{2}\}.\label{eq:k_T_tilte_k_tilte}
\end{equation}

Lastly, using the above notation, we have the following control of
the expansion and contraction along stable and unstable leaves.

\begin{lem}
\cite[Lemma 3.10]{BCFT2017} \label{lem:3.8BCFT}For any $\eta>0$,
pick $\delta=\delta(\eta)$ as above, $v\in T^{1}S$, and $w,w'\in W_{\delta}^{s}(v)$,
we have the following for every $t\geq0$: 
\[
d^{s}(f_{t}w,f_{t}w')\leq d^{s}(w,w')e^{-\int_{0}^{t}\widetilde{\lambda}(f_{\tau}v)d\tau}.
\]
Similarly, if $w,w'\in W_{\delta}^{u}(v)$, then for any $t\geq0$,
\[
d^{u}(f_{-t}w,f_{-t}w')\leq d^{u}(w,w')e^{-\int_{0}^{t}\widetilde{\lambda}(f_{-\tau}v)d\tau}.
\]

\end{lem}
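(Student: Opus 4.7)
The plan is to bound the length of the image of a path in $W^{s}_{\delta}(v)$ under $f_{t}$ by integrating the pointwise contraction rate of stable Jacobi fields given by Lemma \ref{lem:2.9}, and to use the uniform continuity estimate (\ref{eq:k_T_and_k_T_bar}) to replace $k^{s}$ along nearby orbits by $\widetilde{\lambda}$ along the base orbit $\gamma_{v}$.

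First I would fix $\varepsilon>0$ and choose a $C^{1}$ curve $\gamma\colon[0,1]\to W^{s}(v)$ joining $w$ to $w'$ whose projected length $l(\pi\gamma)$ is within $\varepsilon$ of $d^{s}(w,w')$. Since $\gamma$ is tangent to the stable foliation, each $\dot{\gamma}(r)$ lies in $E^{s}(\gamma(r))$, and under the Jacobi-field identification it corresponds to a stable Jacobi field $J_{r}^{s}$ along $\gamma_{\gamma(r)}$ with $\|J_{r}^{s}(0)\|=\|d\pi\,\dot{\gamma}(r)\|$. The curve $r\mapsto\pi f_{t}\gamma(r)$ connects $\pi f_{t}w$ to $\pi f_{t}w'$ in $M$ and has infinitesimal length $\|J_{r}^{s}(t)\|$, so that
\[
d^{s}(f_{t}w,f_{t}w')\;\le\;\int_{0}^{1}\|J_{r}^{s}(t)\|\,dr.
\]
Applying Lemma \ref{lem:2.9} yields $\|J_{r}^{s}(t)\|\le\exp\bigl(-\int_{0}^{t}k^{s}(f_{\tau}\gamma(r))\,d\tau\bigr)\|J_{r}^{s}(0)\|$.

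The key step is to replace $k^{s}$ at points of $\gamma(r)$ by $\widetilde{\lambda}$ at points of $v$. For this I would use that $d^{s}$ is non-increasing along the flow on stable leaves (a direct consequence of the no-focal-points assumption), so every $u=\gamma(r)$ satisfies $d^{s}(f_{\tau}u,f_{\tau}v)\le d^{s}(u,v)$ for all $\tau\ge0$. Since the path length is comparable to $d^{s}(w,w')\le 2\delta$ and $d_{K}\le d^{s}$ locally, by shrinking $\delta=\delta(\eta)$ slightly (e.g.\ requiring the uniform continuity modulus to hold on the scale $3\delta$) we get $d_{K}(f_{\tau}u,f_{\tau}v)$ within the modulus used to define $\delta$. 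Hence $k^{s}(f_{\tau}u)\ge k^{s}(f_{\tau}v)-\tfrac{\eta}{2}\ge\lambda(f_{\tau}v)-\tfrac{\eta}{2}$, which in turn gives $k^{s}(f_{\tau}u)\ge\widetilde{\lambda}(f_{\tau}v)$ (the case $\lambda(f_{\tau}v)\le\tfrac{\eta}{2}$ being trivial because $\widetilde{\lambda}(f_{\tau}v)=0$ and $k^{s}\ge0$ by Proposition \ref{prop:horoshpere}).

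Substituting this lower bound, pulling the resulting factor $\exp\bigl(-\int_{0}^{t}\widetilde{\lambda}(f_{\tau}v)\,d\tau\bigr)$ out of the integral in $r$, and letting $\varepsilon\to 0$ produces the desired inequality. The unstable assertion is obtained by applying the same argument to the flow $f_{-t}$, using that $d^{u}$ is non-increasing under backward time and that $k^{u}(-v)=k^{s}(v)$ so the $\widetilde{\lambda}$-bound transfers. The main technical obstacle is the control in the middle paragraph: one must make sure that every point on the connecting curve, not only its endpoints, remains within the uniform-continuity neighborhood of $\gamma_{v}$ for all positive times, which is exactly where the no-focal-points hypothesis (through the monotonicity of $d^{s}$) is used.
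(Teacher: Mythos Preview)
The paper does not supply its own proof of this lemma; it is quoted from \cite[Lemma 3.10]{BCFT2017}. Your argument is correct and is essentially the standard one: parametrize a curve in $W^{s}(v)$ joining $w$ to $w'$, use Lemma~\ref{lem:2.9} to bound the length of its image under $f_{t}$ by the integral of $e^{-\int_{0}^{t}k^{s}(f_{\tau}\gamma(r))\,d\tau}\|J^{s}_{r}(0)\|$, and then invoke the forward monotonicity of $d^{s}$ together with the uniform continuity of $k^{s}$ to replace $k^{s}(f_{\tau}\gamma(r))$ by $\widetilde{\lambda}(f_{\tau}v)$.

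One remark: your hedging about points on the connecting curve possibly lying at $d^{s}$-distance up to $3\delta$ from $v$ is unnecessary in the present surface setting. Since $\dim S=2$, the leaf $W^{s}(v)$ is one-dimensional, so $W^{s}_{\delta}(v)$ is an arc and the (unique) length-minimizing curve from $w$ to $w'$ stays inside $W^{s}_{\delta}(v)$; hence every $\gamma(r)$ already satisfies $d^{s}(\gamma(r),v)\le\delta$, and therefore $d_{K}(f_{\tau}\gamma(r),f_{\tau}v)\le\delta$ for all $\tau\ge0$ without any adjustment of $\delta$. In higher dimensions your caveat would indeed be needed.
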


The following lemma refines Lemma \ref{lem:3.8BCFT}. In other words
it provides us a nice control on the expansion and contraction for
orbit segments in $\gt$.

\begin{lem}
\label{lem:39}\label{lem:312} For any $T,\eta>0$, pick $\delta=\delta(\eta)$
as in ${\rm Lemma}$ \ref{lem:3.8BCFT}, and suppose $(v,t)\in\gt(\eta)$.
Then every $v'\in B_{t}(v,\delta)$ satisfies $(v',t)\in\gt(\frac{\eta}{2})$.
Moreover, there exists $C=C(T,\eta)>0$ such that for any $(v,t)\in\gt(\eta)$,
any $w,w'\in W_{\delta}^{s}(v)$, and any $0\leq\tau\leq t$, 
\[
d^{s}(f_{\tau}w,f_{\tau}w')\leq Cd^{s}(w,w')e^{-\frac{\eta}{4T}\tau}.
\]
Similarly, for $w,w'\in f_{-t}W_{\delta}^{u}(f_{t}v)$ and $0\leq\tau\leq t$,
we have 
\[
d^{u}(f_{\tau}w,f_{\tau}w')\leq Cd^{u}(f_{t}w,f_{t}w')e^{-\frac{\eta}{4T}(t-\tau)}.
\]
\end{lem}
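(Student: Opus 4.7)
The plan is to dispatch the two halves of the lemma in sequence, since they rest on logically independent pieces of machinery already in place in the excerpt.

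For the stability claim $(v',t) \in \gt(\eta/2)$, I would invoke inequality \eqref{eq:k_T_and_k_T_bar} with $\Theta = \lt$. Given $(v,t) \in \gt(\eta)$ and $v' \in B_t(v,\delta)$, for any $\tau \in [0,t]$ we also have $v' \in B_\tau(v,\delta)$, so the inequality yields
\[
\int_0^\tau \lt(f_\theta v')\,d\theta \;\geq\; \int_0^\tau \lt(f_\theta v)\,d\theta - \tfrac{\eta}{2}\tau \;\geq\; \tfrac{\eta}{2}\tau.
\]
The corresponding backward estimate around $f_t v'$ follows immediately because $d_K(f_{t-\theta}v', f_{t-\theta}v) < \delta$ for every relevant $\theta$.

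For the refined stable contraction bound, I would chain three ingredients: first, the bound $d^s(f_\tau w, f_\tau w') \leq d^s(w,w') \exp\bigl(-\int_0^\tau \widetilde\lambda(f_s v)\,ds\bigr)$ from Lemma \ref{lem:3.8BCFT}; second, the passage \eqref{eq:k_T_tilte_k_tilte} from $\widetilde\lambda$ to $\widetilde{\lt}$, which costs the additive constant $2T\max\{0,\Lambda-\eta/2\}$; and third, the pointwise inequality $\widetilde{\lt} \geq \lt - \eta/2$, which combined with the good-orbit hypothesis $\int_0^\tau \lt(f_s v)\,ds \geq \tau\eta$ produces $\int_0^\tau \widetilde{\lt}(f_s v)\,ds \geq \tau\eta/2$. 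Stringing these together gives the lower bound
\[
\int_0^\tau \widetilde\lambda(f_s v)\,ds \;\geq\; \tfrac{\eta}{4T}\tau - 2T\max\{0,\Lambda-\tfrac{\eta}{2}\},
\]
and exponentiating yields the desired decay with $C := \exp\bigl(2T\max\{0,\Lambda-\eta/2\}\bigr)$, which depends only on $T$ and $\eta$.

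The unstable estimate is the time-reversed mirror: since $w, w' \in f_{-t}W_\delta^u(f_t v)$, I would work at the base point $f_t v$ and invoke the second clause of the $\gt(\eta)$ condition to obtain $\int_0^{t-\tau} \widetilde\lambda(f_{-\sigma} f_t v)\,d\sigma \geq \tfrac{\eta}{4T}(t-\tau) - 2T\max\{0,\Lambda-\eta/2\}$; the change of variable $s = t - \tau$ in the unstable analogue of Lemma \ref{lem:3.8BCFT} then produces the stated exponent $(t-\tau)\eta/(4T)$. I do not anticipate any genuine obstacle; the only subtlety is keeping careful track of the two separate $\eta/2$ losses (one in dropping from $\lt$ to $\widetilde{\lt}$, one inside \eqref{eq:k_T_tilte_k_tilte}) and verifying that the resulting constant $C$ is uniform in the orbit segment $(v,t) \in \gt(\eta)$.
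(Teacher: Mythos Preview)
Your proposal is correct and follows essentially the same route as the paper: you chain Lemma~\ref{lem:3.8BCFT}, inequality~\eqref{eq:k_T_tilte_k_tilte}, and the pointwise bound $\widetilde{\lt}\geq\lt-\eta/2$ together with the $\gt(\eta)$ hypothesis to obtain the exponent $\eta\tau/(4T)$ and the constant $C=\exp\bigl(2T\max\{0,\Lambda-\eta/2\}\bigr)$, exactly as the paper does. Your explicit treatment of the first claim $(v',t)\in\gt(\eta/2)$ via \eqref{eq:k_T_and_k_T_bar} is in fact more detailed than the paper's own proof, which passes over that part in silence.
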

\begin{proof}
By Lemma \ref{alglem}, \ref{lem:3.8BCFT} and equation (\ref{eq:k_T_and_k_T_bar})
and (\ref{eq:k_T_tilte_k_tilte}), because $v\in\gt(\eta)$ we have
\begin{align*}
d^{s}(f_{\tau}w,f_{\tau}w') & \leq d^{s}(w,w')e^{-\int_{0}^{\tau}\widetilde{\lambda}(f_{x}v)dx}\\
 & \leq d^{s}(w,w')\exp\left(\left(\frac{-1}{2T}\int_{0}^{\tau}\widetilde{\lt}(f_{x}v)dx\right)+2T\max\{0,\Lambda-\frac{\eta}{2}\}\right)\\
 & \leq d^{s}(w,w')\exp\left(\frac{-1}{2T}\left(\underset{\geq\tau\eta}{\underbrace{\int_{0}^{\tau}\lt(f_{x}v)dx}}-\frac{\eta}{2}\tau\right)+2T\max\{0,\Lambda-\frac{\eta}{2}\}\right)\\
 & \leq d^{s}(w,w')\exp\left(\frac{-\eta\tau}{2T}+\frac{\eta\tau}{4T}+2T\max\{0,\Lambda-\frac{\eta}{2}\}\right)=C\cdot d^{s}(w,w')e^{\frac{-\eta\tau}{4T}}
\end{align*}
where $C=e^{2T\max\{0,\Lambda-\frac{\eta}{2}\}}$.

Similarly, we have the other inequality.
\end{proof}

\begin{defn}
We define the uniformly regular set as ${\rm Reg}_{T}(\eta):=\{v\in T^{1}S:\ \lt(v)\geq\eta\}.$
\end{defn}

\begin{lem}
\label{lem: uniform angle} Given $\eta,T>0$, there exists $\theta>0$
so that for any $v\in{\rm Reg}_{T}(\eta)$, we have for any $-T\leq t\leq T$
\[
\measuredangle(E^{u}(f_{t}v),E^{s}(f_{t}v))\geq\theta.
\]
 \end{lem}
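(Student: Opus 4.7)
The plan is to argue by contradiction using compactness, leveraging the characterization of $\Sing$ from Proposition~\ref{prop:no_focal_points}(5). Suppose the conclusion fails. Then for some fixed $\eta,T>0$ there exist sequences $v_n\in\Reg_T(\eta)$ and $t_n\in[-T,T]$ with $\measuredangle(E^u(f_{t_n}v_n),E^s(f_{t_n}v_n))\to 0$. Since $T^{1}S$ and $[-T,T]$ are compact, pass to subsequences so that $v_n\to v_\star$ and $t_n\to t_\star$, hence $f_{t_n}v_n\to f_{t_\star}v_\star$.

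The first ingredient I would invoke is the continuity of the subbundles $E^u$ and $E^s$ on $T^{1}S$ in the no focal points setting (as developed in \cite{Pesin:1977vi,Eschenburg:1977kn}, and implicit in the foliation statements collected in Proposition~\ref{prop:no_focal_points}). Continuity of the angle then gives $\measuredangle(E^u(f_{t_\star}v_\star),E^s(f_{t_\star}v_\star))=0$, so $E^u(f_{t_\star}v_\star)\cap E^s(f_{t_\star}v_\star)\neq\emptyset$. By Proposition~\ref{prop:no_focal_points}(5), this forces $f_{t_\star}v_\star\in\Sing$, and since $\Sing$ is flow-invariant, $v_\star\in\Sing$.

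The second ingredient is that $\lambda\equiv 0$ on $\Sing$. By Proposition~\ref{prop:no_focal_points}(7), the Gaussian curvature vanishes along the entire orbit of $v_\star$. Combining Proposition~\ref{prop:k^u_and_Coddazi_eq} with the Jacobi equation yields the scalar Riccati equations $(k^u)'+(k^u)^2+K=0$ and $-(k^s)'+(k^s)^2+K=0$ along $\g_{v_\star}$. With $K\equiv 0$ and the globally uniform bound $0\le k^\sigma\le\Lambda$ from Proposition~\ref{prop:horoshpere}(3), the only bounded solution defined on all of $\R$ is $k^\sigma\equiv 0$ along that orbit. Hence $\lambda(f_\tau v_\star)=0$ for every $\tau$, so $\lambda_T(v_\star)=0$. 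On the other hand, continuity of $k^s,k^u$ (hence of $\lambda$ and $\lambda_T$) gives $\lambda_T(v_\star)=\lim_n\lambda_T(v_n)\ge\eta>0$, a contradiction.

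The main obstacle is the appeal to continuity of $E^u$ and $E^s$: although standard in the no focal points literature, it rests on showing that the limits of stable/unstable Jacobi fields depend continuously on the base vector, which is nontrivial outside the Anosov setting. If one prefers to avoid this, a parallel argument can be made at the horocycle level: a vanishing angle between $E^u$ and $E^s$ at $f_{t_n}v_n$ forces the stable and unstable horocycles at $\pi f_{t_n}v_n$ to be tangent in the limit, and by the Flat Strip Theorem (Proposition~\ref{prop:no_focal_points}(6)) together with Proposition~\ref{prop:no_focal_points}(7) this again places $f_{t_\star}v_\star$ inside $\Sing$, after which the Riccati step closes the argument identically.
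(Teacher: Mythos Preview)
Your proof is correct and follows essentially the same route as the paper: a compactness--contradiction argument using continuity of $E^{u},E^{s}$ to produce a limit vector with $E^{u}\cap E^{s}\neq\emptyset$, hence in $\Sing$ by Proposition~\ref{prop:no_focal_points}(5), and then deriving a contradiction with $v_\star\in\Reg_T(\eta)$. The only cosmetic difference is that the paper closes the argument in one line by noting that $\Reg_T(\eta)$ is closed (so $v_\star\in\Reg_T(\eta)$) and that $\Reg_T(\eta)\cap\Sing=\emptyset$, whereas you spell out the latter disjointness via a Riccati analysis; in fact once $v_\star\in\Sing$ one gets $k^{u}\equiv k^{s}\equiv 0$ along the orbit directly from Lemma~\ref{lem:3.3}, so the Riccati step, while correct, is more than is needed.
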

\begin{proof}
Assume the contrary. Then there exists $\{(v_{i},t_{i})\}_{i\in\mathbb{N}}\subset{\rm Reg}_{T}(\eta)\times[-T,T]$
such that 
\[
\measuredangle(E^{s}(f_{t_{i}}v_{i}),E^{u}(f_{t_{i}}v_{i}))\to0.
\]
Since ${\rm Reg}_{T}(\eta)\times[-T,T]$ is compact, there exist subsequences
$t_{i_{j}}\to t_{0}$, and $v_{i_{j}}\to v_{0}$ such that $\measuredangle(E^{s}(f_{t_{0}}v_{0}),E^{u}(f_{t_{0}}v_{0}))=0$.
Then, $f_{t_{0}}v_{0}\in{\rm Sing}$. On the other hand, ${\rm Reg}_{T}(\eta)$
is closed so $v_{0}\in{\rm Reg}_{T}(\eta)$. However, this is a contradiction
because $\text{Sing}$ is flow invariant. 
\end{proof}

\subsection{Relations between $k^{s},k^{u},\lambda,\lt$, and ${\rm Sing}$}

The aim of this subsection is to show how one can use these hyperbolicity
indices $\lambda$ and $\lambda_{T}$ to characterize the singular
set $\Sing$.

\begin{lem}
\label{lem:3.3}The following are equivalent for $v\in T^{1}S$. 

\begin{enumerate}[font=\normalfont]

\item $v\in{\rm Sing}$.

\item $k^{u}(f_{t}v)=0$ for all $t\in\R$.

\item $k^{s}(f_{t}v)=0$ for all $t\in\R$

\end{enumerate}
\end{lem}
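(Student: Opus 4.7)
The plan is to establish the equivalence $(1)\Leftrightarrow(2)$ directly and then deduce $(1)\Leftrightarrow(3)$ from it by a symmetry argument. Specifically, I will use the identity $k^s(-v)=k^u(v)$ recorded at the end of Section~\ref{sec:Preliminaries-of-geometry}, together with the fact that $\mathrm{Sing}$ is invariant under the involution $v\mapsto-v$ (which is immediate, since reversing the parametrization of $\gamma_v$ does not affect the subspace of parallel Jacobi fields and hence preserves rank). Applying $(1)\Leftrightarrow(2)$ to $-v$ in place of $v$ and using $f_t(-v)=-f_{-t}(v)$ then produces $(1)\Leftrightarrow(3)$.

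For the direction $(2)\Rightarrow(1)$, I would pick a nontrivial $H^u$-Jacobi field $J^u$ along $\gamma_v$, which exists because $T_{\pi v}H^u(v)$ is a nondegenerate one-dimensional space (the horocycle is $C^2$-embedded by Proposition~\ref{prop:horoshpere}). Proposition~\ref{prop:k^u_and_Coddazi_eq} then gives $(J^u)'(t)=k^u(f_tv)J^u(t)=0$ for all $t$, so $J^u$ is a nonzero orthogonal parallel Jacobi field. Combined with the tangential parallel field $\dot\gamma_v$, this furnishes a $2$-dimensional space of parallel Jacobi fields, forcing the rank of $v$ to be at least $2$, hence $v\in\mathrm{Sing}$. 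For the direction $(1)\Rightarrow(2)$, assuming $v\in\mathrm{Sing}$ I invoke Proposition~\ref{prop:no_focal_points}(7), which applies since $\dim S=2$, to conclude $K(\pi f_tv)=0$ for every $t\in\mathbb{R}$. Writing $J^u(t)=j(t)P(t)$ with $P$ a unit parallel vector field along $\gamma_v$ perpendicular to $\dot\gamma_v$, the Jacobi equation reduces to $j''(t)=0$, so $j(t)=at+b$. Membership in $\mathcal{J}^u$ forces $|j(t)|$ to be bounded as $t\to-\infty$, which is possible only if $a=0$. Hence $(J^u)'(t)\equiv 0$, and a second application of Proposition~\ref{prop:k^u_and_Coddazi_eq} yields $k^u(f_tv)J^u(t)=0$; since $J^u(t)\equiv bP(t)$ is nonvanishing, $k^u(f_tv)=0$ for all $t$.

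The main potential obstacle is ensuring the $H^u$-Jacobi field does not vanish along the orbit, so that one may legitimately divide by $J^u$ in the identity $(J^u)'=k^uJ^u$. In the direction $(1)\Rightarrow(2)$ this is handled explicitly by the representation $J^u(t)=bP(t)$ with $b\neq 0$, and in $(2)\Rightarrow(1)$ parallel transport preserves the norm, so nonvanishing is automatic. Beyond this, the argument is a short manipulation of the one-dimensional Jacobi equation, combined with the sign conventions in Proposition~\ref{prop:k^u_and_Coddazi_eq} and the surface-specific characterization of $\mathrm{Sing}$ in Proposition~\ref{prop:no_focal_points}(5) and (7).
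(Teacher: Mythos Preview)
Your proof is correct and follows essentially the same approach as the paper. The paper handles $(2)\Rightarrow(1)$ identically via Proposition~\ref{prop:k^u_and_Coddazi_eq}, treats $(1)\Rightarrow(2),(3)$ as ``clear'' without elaboration (your route through Proposition~\ref{prop:no_focal_points}(7) and the explicit one-dimensional Jacobi equation is a perfectly valid way to fill this in), and argues $(3)\Rightarrow(1)$ ``similarly'' rather than via your symmetry argument---a purely organizational difference.
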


\begin{proof}
It is clear that $(1)\implies(2)$ and $(3)$. We will prove $(2)\implies(1)$
which then $(3)\implies(1)$ similarly follows.

To see $(2)\implies(1)$, it is enough to show that $J^{u}$ the unstable
Jacobi field along $\g_{v}$ is parallel. By Proposition \ref{prop:k^u_and_Coddazi_eq},
we have for all $t\in\R$ 
\[
(J^{u})'(t)=k^{u}(f_{t}v)J^{u}(t)=0.
\]
Thus $J^{u}$ is a parallel Jacobi field.
\end{proof}

\begin{lem}
\label{lem:lamb_T_and_sing} $\lt(v)=0$ for all $T$ if and only
if $v\in\mathrm{Sing}$. 
\end{lem}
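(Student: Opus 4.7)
The plan is to exploit the nonnegativity $k^u, k^s \geq 0$ (from Proposition \ref{prop:horoshpere}), together with the equivalent characterizations of $\Sing$ supplied by Lemma \ref{lem:3.3} and Proposition \ref{prop:no_focal_points}(5), and then to run a short connectedness argument on $\R$. The backward direction is immediate: if $v \in \Sing$, Lemma \ref{lem:3.3} gives $k^u(f_\tau v) \equiv 0$, whence $\lambda(f_\tau v) = \min(k^u(f_\tau v), k^s(f_\tau v)) \equiv 0$, and therefore $\lt(v) = 0$ for every $T > 0$.

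For the forward direction, suppose $\lt(v) = 0$ for all $T > 0$. By continuity and nonnegativity of $\lambda$, this forces $\lambda(f_\tau v) = 0$ for every $\tau \in \R$, so at each $\tau$ at least one of $k^u(f_\tau v)$, $k^s(f_\tau v)$ vanishes. I would then consider the closed sets
\[
A := \{\tau \in \R : k^u(f_\tau v) = 0\}, \qquad B := \{\tau \in \R : k^s(f_\tau v) = 0\},
\]
whose union is all of $\R$. If $A = \R$ or $B = \R$, Lemma \ref{lem:3.3} gives $v \in \Sing$ directly. Otherwise both $A^c$ and $B^c$ are nonempty open sets satisfying $A^c \cap B^c = (A \cup B)^c = \emptyset$, so the connectedness of $\R$ forbids $A^c \cup B^c = \R$; hence $A \cap B \neq \emptyset$, and we fix some $\tau_1 \in A \cap B$.

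At such a $\tau_1$, Proposition \ref{prop:k^u_and_Coddazi_eq} yields $(J^u)'(\tau_1) = k^u(f_{\tau_1} v) J^u(\tau_1) = 0$ and $(J^s)'(\tau_1) = -k^s(f_{\tau_1} v) J^s(\tau_1) = 0$ for the one-dimensional orthogonal unstable and stable Jacobi fields $J^u, J^s$ along $\gamma_v$. Since $S$ has no focal points (hence no conjugate points), $J^u$ and $J^s$ are nowhere zero, so in particular $J^u(\tau_1), J^s(\tau_1) \neq 0$. Uniqueness for the second-order linear Jacobi equation with prescribed data $(J(\tau_1), J'(\tau_1)) = (J^u(\tau_1), 0)$ then forces $J^s$ to be a nonzero scalar multiple of $J^u$. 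Under the identification $T_{f_{\tau_1} v} T^1 S \cong \mathcal{J}(\gamma_v)$, this produces a common nonzero vector in $E^u(f_{\tau_1} v) \cap E^s(f_{\tau_1} v)$, so by Proposition \ref{prop:no_focal_points}(5) we obtain $f_{\tau_1} v \in \Sing$, and the flow invariance of $\Sing$ yields $v \in \Sing$.

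The most delicate step will be justifying that $J^u(\tau_1)$ and $J^s(\tau_1)$ are both nonzero so that the uniqueness argument produces genuine proportionality rather than two degenerate Jacobi fields; this is precisely where the no-conjugate-points property (a consequence of the no-focal-points assumption) is used, and it is what allows the Riccati/horocycle data at a single time $\tau_1$ to globally identify $J^s$ with a multiple of $J^u$.
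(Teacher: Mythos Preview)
Your proof is correct and follows essentially the same route as the paper: reduce to $\lambda(f_\tau v)\equiv 0$ by nonnegativity and continuity, run the connectedness argument on $\R$ with the closed zero sets of $k^u$ and $k^s$ to isolate the three cases, and in the simultaneous-vanishing case conclude $E^u\cap E^s\neq 0$ via Proposition~\ref{prop:no_focal_points}(5). Your treatment of that last case---invoking Proposition~\ref{prop:k^u_and_Coddazi_eq}, the no-conjugate-points property to ensure $J^u(\tau_1),J^s(\tau_1)\neq 0$, and ODE uniqueness to force proportionality---is in fact more carefully justified than the paper's somewhat informal ``the Jacobi fields match'' step.
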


\begin{proof}
The if direction is clear. In the following we prove the only if direction. 

First we notice that since $\lambda$ is nonnegative, continuous,
we have that $\lt(v)=0$ for all $T\in\R$ implies $\lambda(f_{t}v)=0$
for all $t\in\R$.

\textbf{Claim:} There are only three possible cases such that $\lambda(f_{t}v)=0$
for all $t\in\R$: \begin{itemize}[font=\normalfont]

\item[(i)] $k^{s}(f_{t}v)=0$ for all $t\in\R$.

\item[(ii)] $k^{u}(f_{t}v)=0$ for all $t\in\R$.

\item[(iii)] There exists $t_{0}\in\R$ such that $k^{s}(f_{t_{0}}v)=k^{u}(f_{t_{0}}v)=0$.

\end{itemize}

It is clear from Lemma \ref{lem:3.3} that both $({\rm i})$ and $({\rm ii})$
give $v\in{\rm Sing}$. To see $({\rm iii})$ also implies $v\in{\rm Sing}$,
we recall that, for $\sigma\in\{s,u\}$, $k^{\sigma}(f_{t_{0}}v)=0$
implies that there exists $0\neq w^{\sigma}\in T_{\pi(f_{t_{0}}v)}H^{\sigma}(f_{t_{0}}v)$
such that $k^{\sigma}(w^{\sigma})=0$ . Since both $w^{u},w^{s}$
are tangent to $f_{t_{0}}v$ and $S$ is a surface, we know $w^{u}=w^{s}$
(by taking the same length). It is not hard to see that the $H^{u}(f_{t_{0}}v)-$
Jacobi field $J^{u}$ matches the $H^{s}(f_{t_{0}}v)-$ Jacobi field
$J^{s}$, that implies, $E^{u}(f_{t_{0}}v)\cap E^{s}(f_{t_{0}}v)\neq0$.
Thus we have $f_{t_{0}}v\in\Sing$, and because $\Sing$ is flow invariant
we have $v\in\Sing.$

To see the claim, let $U:=\{t\in\R:\ \lambda^{u}(f_{t}v)=0\}$ and
$W:=\{t\in\R:\ \lambda^{s}(f_{t}v)=0\}$. Since both $\lambda^{u},\lambda^{s}$
are continuous, $U$ and $W$ are closed sets in $\R$. Notice that
if $U\cap W=\emptyset$ then $U=\R\backslash W$; thus $U$, $W$
are clopen sets. Since $\R$ is connected, if $U\cap W=\emptyset$,
then $U=\R$ or $W=\R$.
\end{proof}

\begin{lem}
\label{lem:supp_in_sing}Let $\mu$ be a ${\cal F}$-invariant probability
measure on $T^{1}S$. Suppose $\lambda(v)=0$ for $\mu-$a.e. $v\in T^{1}S$,
then ${\rm supp(\mu)\subset\Sing}.$
\end{lem}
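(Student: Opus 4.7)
The plan is to leverage Lemma \ref{lem:lamb_T_and_sing}, which characterizes $\mathrm{Sing}$ as exactly the set where $\lambda_T$ vanishes for every $T>0$. So the goal is to show that $\mu$-a.e.\ $v$ satisfies $\lambda_T(v)=0$ for all $T>0$; once that is in hand, $\mu(\mathrm{Sing})=1$, and since $\mathrm{Sing}$ is closed this immediately gives $\mathrm{supp}(\mu)\subset\mathrm{Sing}$.

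First I would translate the hypothesis into an integral statement. Since $\lambda\geq 0$ and $\lambda=0$ $\mu$-a.e., we have $\int\lambda\,d\mu=0$. Fix any $T>0$. Using the $\cal F$-invariance of $\mu$ together with Fubini (both integrands are non-negative and $\lambda$ is continuous, hence jointly measurable), I compute
\[
\int \lambda_T(v)\,d\mu(v)=\int\!\!\int_{-T}^{T}\lambda(f_\tau v)\,d\tau\,d\mu(v)=\int_{-T}^{T}\left(\int \lambda\,d\mu\right)d\tau=0.
\]
Because $\lambda_T\geq 0$, this forces $\lambda_T(v)=0$ for $\mu$-a.e.\ $v$.

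Next I would upgrade this to a statement that holds simultaneously for all $T$. For each $N\in\mathbb{N}$ let $A_N:=\{v:\lambda_N(v)=0\}$, a set of full $\mu$-measure. Then $A:=\bigcap_{N\in\mathbb{N}}A_N$ also has full $\mu$-measure. For any real $T>0$, choose an integer $N\geq T$; since $\lambda\geq 0$, the map $T\mapsto\lambda_T(v)$ is non-decreasing, and therefore for $v\in A$
\[
0\leq\lambda_T(v)\leq\lambda_N(v)=0.
\]
Thus every $v\in A$ satisfies $\lambda_T(v)=0$ for all $T>0$.

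By Lemma \ref{lem:lamb_T_and_sing}, $A\subset\mathrm{Sing}$, so $\mu(\mathrm{Sing})\geq\mu(A)=1$. Since $\mathrm{Sing}$ is closed (noted in the preceding proposition), $\mathrm{supp}(\mu)\subset\mathrm{Sing}$, which completes the proof. There is no real obstacle here; the only mild care needed is the Fubini/monotonicity step that converts the single-$T$ conclusion into a statement valid for every $T$ on a common full-measure set, and that is handled by the countable-intersection argument above.
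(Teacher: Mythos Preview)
Your proof is correct, but it takes a somewhat different route from the paper's. The paper argues by contradiction: assuming some $v\in\mathrm{Reg}\cap\mathrm{supp}(\mu)$, it uses Lemma~\ref{lem:lamb_T_and_sing} to find a time $t_0$ with $\lambda(f_{t_0}v)>0$, then continuity of $\lambda$ to produce an open neighborhood of $f_{t_0}v$ on which $\lambda>0$, and finally $\mathcal F$-invariance of $\mu$ (in the form $\mu(f_{-t_0}B)=\mu(B)$) to show this neighborhood has positive $\mu$-measure, contradicting $\lambda=0$ $\mu$-a.e. Your approach instead exploits invariance at the level of integrals: $\int\lambda_T\,d\mu=2T\int\lambda\,d\mu=0$ forces $\lambda_T=0$ $\mu$-a.e.\ for each $T$, and the countable-intersection/monotonicity step feeds this into Lemma~\ref{lem:lamb_T_and_sing}. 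Both arguments are short and rest on the same two ingredients (flow invariance and Lemma~\ref{lem:lamb_T_and_sing}); the paper's is a touch more direct and topological, while yours is cleaner in that it avoids the pointwise contradiction and works entirely with integrals and full-measure sets.
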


\begin{proof}
Suppose ${\rm supp}(\mu)\nsubseteq\Sing$. Since $\mu$ is Borel,
there exists $v\in{\rm Reg}\cap{\rm supp}(\mu)$ such that for any
$r>0$ we have $\mu(B(v,r))>0$. We also notice that since $v\in{\rm Reg}$
there exists $t_{0}$ such that $\lambda(f_{t_{0}}v)>0$ (otherwise
$v\in\Sing$ by Lemma \ref{lem:lamb_T_and_sing}). By the continuity
of $\lambda$, there exists a neighborhood $B(f_{t_{0}}v,r_{0})$
of $f_{t_{0}}v$ such that $\left.\lambda\right|_{B(f_{t_{0}}v,r_{0})}>0$.
Then there exists $r>0$ such $B(v,r)\subset f_{-t_{0}}(B(f_{t_{0}}v,r_{0}))$
and we have

\[
\mu(B(f_{t_{0}}v,r_{0})=\mu(f_{-t_{0}}(B(f_{t_{0}}v,r_{0})))\geq\mu(B(v,r))>0.
\]
Hence, $\lambda$ cannot vanish $\mu-$almost everywhere.
\end{proof}

\section{The specification property\label{sec:The-specification-property}}

Let $X$ be a compact metric space with metric $d$ and ${\cal F}=(f_{t})_{t\in\R}$
be a flow on $X$. For any $t\in\R^{+}$, we set $d_{t}(v,w)=\sup\limits _{s\in[0,t]}d(f_{s}v,f_{s}w)$
for any $v,w\in X$.

In what follows, $X$ will be $T^{1}S$ and $d$ the Knieper metric
$d_{K}$. With respect to the intrinsic metric $d^{cs}$ and $d^{u}$
on $W^{cs}$ and $W^{u}$, these metrics relate to each other by (from
the fact that the stable manifold is non-increasing in forward time)
\[
d_{K}(v,w)\leq d^{cs}(v,w)~~\text{ and }~~d_{K}(v,w)\leq e^{\Lambda}d^{u}(v,w)
\]
where $\Lambda=\max_{v\in T^{1}S}k^{s}(v)=\max_{v\in T^{1}S}k^{u}(v)$,
and which then imply 
\begin{align}
\begin{split}d_{t}(v,w) & \leq d^{cs}(v,w),\\
d_{t}(v,w) & \leq d^{u}(f_{t+1}v,f_{t+1}w)\leq e^{\Lambda}d^{u}(f_{t}v,f_{t}w).
\end{split}
\label{eq: metric comparison}
\end{align}

\begin{defn}
The foliations $W^{cs}$ and $W^{u}$ have \textit{local product structure}
at scale $\d>0$ with constant $\kappa\geq1$ at $v$ if for any $w_{1},w_{2}\in B(v,\d)$,
the intersection $[w_{1},w_{2}]:=W_{\k\d}^{u}(w_{1})\cap W_{\k\d}^{cs}(w_{2})$
is a unique point and satisfies 
\begin{align*}
d^{u}(w_{1},[w_{1},w_{2}]) & \leq\k d_{K}(w_{1},w_{2}),\\
d^{cs}(w_{2},[w_{1},w_{2}]) & \leq\k d_{K}(w_{1},w_{2}).
\end{align*}

\end{defn}

For any $T,\eta>0$, we define ${\cal C}_{T}(\eta):=\{(v,t)\colon v,f_{t}v\in\text{Reg}_{T}(\eta)\}$.
The uniform lower bound of $\lt$ on the endpoints of the orbits in
${\cal C}_{T}(\eta)$ guarantees the uniform local product structure
on ${\cal C}_{T}(\eta)$:

\begin{lem}
For any $T,\eta>0$, there exist $\d>0$ and $\k\geq1$ such that
${\cal C}_{T}(\eta)$ has local product structure at scale $\d$ with
constant $\k$. 
\end{lem}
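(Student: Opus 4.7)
The plan is to leverage Lemma \ref{lem: uniform angle} to obtain uniform transversality of the foliations $W^u$ and $W^{cs}$ at every endpoint of every orbit segment in $\ct(\eta)$, and then extract uniform constants $\d$ and $\k$ by continuity and compactness. Since by definition $(v,t)\in\ct(\eta)$ means both $v$ and $f_tv$ lie in $\Reg_T(\eta)$, it suffices to establish the local product structure at each $v\in\Reg_T(\eta)$ with a single pair of constants.

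The first step is to translate the uniform lower bound $\measuredangle(E^u(v),E^s(v))\geq\theta$ from Lemma \ref{lem: uniform angle} into the transversality of $E^u$ and $E^{cs}$ that is actually relevant for the product structure. Under the Sasaki metric, $T_vT^1S$ splits orthogonally as $E^c(v)\oplus\mathcal{J}(v)$, where $\mathcal{J}(v)$ denotes the subspace of orthogonal Jacobi fields; the geodesic spray spanning $E^c(v)$ has vanishing Sasaki inner product with every element $(J(0),J'(0))$ of $\mathcal{J}(v)$, so $E^c(v)\perp\mathcal{J}(v)$. Because $E^u(v),E^s(v)\subset\mathcal{J}(v)$, a direct orthogonal-projection computation gives $\measuredangle(E^u(v),E^{cs}(v))=\measuredangle(E^u(v),E^s(v))\geq\theta$ uniformly in $v\in\Reg_T(\eta)$.

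The second step is to apply the classical construction of local product structure for uniformly transverse continuous foliations with $C^2$ leaves. By Proposition \ref{prop:horoshpere}, the horocycles are $C^2$-embedded with uniformly bounded shape operator on $T^1S$, and the leaves of $W^{cs}$ are obtained by flowing the stable horocycles. Combined with the uniform angle $\theta$, this produces, at each $v\in\Reg_T(\eta)$, a neighborhood in which $W^u_{\k\d}(w_1)\cap W^{cs}_{\k\d}(w_2)$ is a single point satisfying the required Lipschitz estimates, with the scale depending only on $\theta$ and on the uniform horocycle regularity. Since $\Reg_T(\eta)$ is closed in the compact space $T^1S$ and hence compact, a standard covering argument yields a single pair $(\d,\k)$ that works for every $v\in\Reg_T(\eta)$.

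The main obstacle is the degeneration at $\Sing$, where $E^u=E^s$ causes the product structure to collapse entirely; were one allowed a sequence $v_n\in\Reg_T(\eta)$ with $\measuredangle(E^u(v_n),E^s(v_n))\to 0$, no uniform $\d,\k$ could exist. The quantitative bound $\lt\geq\eta$ built into the definition of $\Reg_T(\eta)$ prevents approach to $\Sing$, and Lemma \ref{lem: uniform angle} is precisely the statement that this quantitative separation upgrades to the uniform angle $\theta>0$ needed by the above scheme. A secondary subtlety, easily handled in the no-focal-points setting, is that the distributions $E^u$ and $E^{cs}$ vary continuously on $\Reg$, which follows from the standard construction of horospheres as limits of geodesic spheres and is what allows us to localize the transversality estimate from $v$ to a neighborhood $B(v,\d)$ of $v$.
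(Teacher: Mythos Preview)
Your proposal is correct and follows essentially the same approach as the paper: the paper's proof is a one-line appeal to the uniform angle gap from Lemma~\ref{lem: uniform angle} together with the continuity of the distributions $E^s$ and $E^u$, and your argument is a careful unpacking of exactly that. Your extra step showing $\measuredangle(E^u,E^{cs})=\measuredangle(E^u,E^s)$ via the Sasaki orthogonality $E^c\perp E^u,E^s$ is a correct and welcome clarification that the paper leaves implicit.
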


\begin{proof}
The lemma follows from the uniform angle gap from Lemma \ref{lem: uniform angle}
together with the continuity of the distribution $E^{s}$ and $E^{u}$. 
\end{proof}
The following corollary is due to the transitivity of the geodesic
flow.

\begin{prop}
\label{cor: connecting orbits} Let $T,\eta>0$ be given. Then there
exists $\d>0$ such that for any $\rho\in(0,\d]$, there exists $a=a(\rho)$
such that the following holds: for any $v,w\in T^{1}M$ with $d_{K}(v,{\rm Reg}_{T}(\eta))<\delta$
and $d_{K}(w,{\rm Reg}_{T}(\eta))<\d$, there exists $\tau\in[0,a]$
and $[v,w]_{\tau}\in T^{1}S$ such that 
\begin{equation}
[v,w]_{\tau}\in W_{\rho}^{u}(v)~~\text{ and }~~f_{\tau}[v,w]_{\tau}\in W_{\rho}^{cs}(w).\label{eq: =00005B,=00005D_tau}
\end{equation}

\end{prop}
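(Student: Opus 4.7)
The plan is to combine the uniform local product structure furnished by the previous lemma, the topological transitivity of the geodesic flow (Proposition~\ref{prop:no_focal_points}(1)), and the compactness of $\overline{\mathrm{Reg}_T(\eta)}$. Let $\delta_0>0$ and $\kappa\ge 1$ denote the scale and constant for the local product structure; I would take $\delta$ to be a small fraction of $\delta_0$ tuned to absorb the loss in the bracket estimates below.

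Given $\rho\in(0,\delta]$, cover the compact set $\overline{\mathrm{Reg}_T(\eta)}$ by finitely many $d_K$-balls $\{B_K(v_i,r)\}_{i=1}^N$ centered at $v_i\in\overline{\mathrm{Reg}_T(\eta)}$, with $r=r(\rho)$ a sufficiently small multiple of $\rho/\kappa^2$. By topological transitivity there is $z\in T^1 S$ with dense forward orbit, so for each ordered pair $(i,j)$ I may pick $s_{ij}<t_{ij}$ with $f_{s_{ij}}(z)\in B_K(v_i,r)$ and $f_{t_{ij}}(z)\in B_K(v_j,r)$; setting $a(\rho):=\max_{i,j}(t_{ij}-s_{ij})$ gives the required uniform transition time. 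For $v,w$ with $d_K(v,\mathrm{Reg}_T(\eta)),\,d_K(w,\mathrm{Reg}_T(\eta))<\delta$, choose regular points $v^*,w^*$ within $\delta$ of them and indices $i,j$ with $v^*\in B_K(v_i,r)$, $w^*\in B_K(v_j,r)$; then set $p:=f_{s_{ij}}(z)$, $p':=f_{t_{ij}}(z)$, and $\tau:=t_{ij}-s_{ij}\le a(\rho)$.

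The point $[v,w]_\tau$ is constructed by two successive applications of the bracket. First, let $q:=W^u(v)\cap W^{cs}(p)$, which by the previous lemma is a unique point satisfying $d^u(v,q),\,d^{cs}(p,q)\le\kappa\, d_K(v,p)$. Flowing forward by $\tau$, the non-increasing property of $d^{cs}$ along the flow gives $d^{cs}(f_\tau q,p')\le d^{cs}(q,p)$, so $f_\tau q$ remains close to $p'$ and hence close to $w$. Second, let $q':=W^u(f_\tau q)\cap W^{cs}(w)$, well-defined by the previous lemma because $d_K(f_\tau q,w)\le d^{cs}(f_\tau q,p')+d_K(p',w)$ is controlled. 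Finally, set $[v,w]_\tau:=f_{-\tau}q'$. Flow-invariance of the foliation $W^u$ yields $W^u(f_\tau q)=W^u(f_\tau v)$, so $q'\in W^u(f_\tau v)$ and therefore $[v,w]_\tau\in W^u(v)$, while $f_\tau[v,w]_\tau=q'\in W^{cs}(w)$ by construction.

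The main obstacle is the quantitative size control. The $d^{cs}$-estimate $d^{cs}(w,q')\le\kappa\, d_K(f_\tau q,w)\le\kappa^2 d_K(v,p)+\kappa\, d_K(p',w)$ is forced to be $\le\rho$ by choosing $r$ small enough in the covering step. For the $d^u$-bound split via $d^u(v,[v,w]_\tau)\le d^u(v,q)+d^u(q,f_{-\tau}q')$; the first term is $\le\kappa\, d_K(v,p)$ by the first bracket, while the second uses that $d^u$ is non-decreasing under forward flow, so $d^u(q,f_{-\tau}q')\le d^u(f_\tau q,q')\le\kappa\, d_K(f_\tau q,w)$. The delicate technical task is to tune $\delta$, $r$, and $a(\rho)$ so that the inherent ``one-step'' losses $\kappa$ from the two bracket applications, combined with the initial proximity hypotheses, simultaneously produce $\rho$-quality control on both foliations.
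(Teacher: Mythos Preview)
Your proposal is correct and follows essentially the same approach as the paper: both obtain a transit orbit close to $v$ and $w$ via transitivity plus compactness, then build $[v,w]_\tau$ by two successive brackets (first at $v$, then at $w$ after flowing), exploiting that $d^{cs}$ is non-increasing and $d^u$ is non-decreasing along the flow. The only cosmetic difference is that the paper asserts the uniform transit time directly from transitivity with scale $\rho/4\kappa^2$, whereas you spell out the covering-by-balls argument; the constant tuning you flag as ``delicate'' is just the bookkeeping the paper absorbs into that choice of scale.
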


\begin{proof}
Let $\ep$ and $\kappa$ be the constants from the local product structure
on ${\rm Reg}_{T}(\eta)$. By taking $\d\in(0,\ep/2)$ sufficiently
small, we can ensure that the $\d$-neighborhood of ${\rm Reg}_{T}(\eta)$
has local product structure at scale $\ep/2$ with constant $2\kappa$.
Now using the transitivity of the flow $\F$, for any $\rho\in(0,\d)$,
we can find $a=a(\rho)$ such that the following holds: for any $v,w$,
there exists $x=x(v,w)\in B(v,\rho/4\kappa^{2})$ and $\tau\in(0,a)$
with $f^{\tau}x\in B(w,\rho/4\kappa^{2})$.

If $v,w$ happen to be $\d$-close to ${\rm Reg}_{T}(\eta)$, then
the uniform local product structure on $\d$-neighborhood of ${\rm Reg}_{T}(\eta)$
gives $[v,w]_{\tau}$ as follows: take $z=[v,x]$ and set $[v,w]_{\tau}:=f_{-\tau}[f_{\tau}z,w]$.
Then, $[v,w]_{\tau}:=f_{-\tau}[f_{\tau}z,w]$ satisfies \eqref{eq: =00005B,=00005D_tau}. 
\end{proof}

\begin{rem}
It is worth noting that the choices of $\tau$ and $[v,w]_{\tau}$
are not unique; we simply choose any one of $[v,w]_{\tau}$'s that
satisfy \eqref{eq: =00005B,=00005D_tau}. 
\end{rem}

\begin{prop}
\label{prop: specification} For any $\eta,T>0$, ${\cal C}_{T}(\eta)$
has specification. Hence, so does $\gt(\eta)$. 
\end{prop}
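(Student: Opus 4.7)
The plan is to establish specification for $\ct(\eta)$ by an iterative right-to-left gluing using the connecting mechanism of Proposition \ref{cor: connecting orbits}, and then to conclude specification for $\gt(\eta)$ from the inclusion $\gt(\eta)\subset \ct(\eta)$. The inclusion follows from the definition of $\gt(\eta)$ by letting $\tau\to 0^+$ in $\int_0^\tau \lt(f_\theta v)\,d\theta\ge\tau\eta$ and using continuity of $\lt$ to read off $\lt(v)\ge\eta$, and symmetrically $\lt(f_tv)\ge\eta$, so both endpoints lie in $\regt$.

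Fix a target scale $\rho>0$. Let $\delta=\delta(\eta,T)$ be the scale provided by Proposition \ref{cor: connecting orbits}, and choose an auxiliary scale $\rho'\le\min\{\rho,\delta e^{-\Lambda}\}$. Given any finite sub-collection $(v_1,t_1),\ldots,(v_N,t_N)\in\ct(\eta)$, construct anchors inductively from right to left: set $z_N:=v_N$ and, for $j=N-1,\ldots,1$, define
\[
z_j:=f_{-t_j}\bigl([f_{t_j}v_j,\,z_{j+1}]_{\tau_j}\bigr),
\]
where $\tau_j\in[0,a(\rho')]$ is the transition time from Proposition \ref{cor: connecting orbits}. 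The inductive invocation is legitimate because $f_{t_j}v_j\in\regt$ by membership in $\ct(\eta)$, and one checks inductively that $d_K(z_{j+1},v_{j+1})\le e^\Lambda\rho'\le\delta$ using $f_{t_{j+1}}z_{j+1}\in W^u_{\rho'}(f_{t_{j+1}}v_{j+1})$, backward non-expansion of $d^u$ in the no-focal-points setting, and the metric comparison \eqref{eq: metric comparison}. Set $y:=z_1$ and retain $\tau_1,\ldots,\tau_{N-1}$ as the transition times; we claim $\tau(\rho):=a(\rho')$ witnesses specification at scale $\rho$.

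For the shadowing bound, write $S_j:=\sum_{i<j}(t_i+\tau_i)$ for the time at which $y$'s orbit should begin tracking the $j$-th segment. Two monotonicity facts from Section \ref{sec:Preliminaries-of-geometry}, both consequences of the absence of focal points, drive the estimate: $d^u$ is non-increasing in backward time and $d^{cs}$ is non-increasing in forward time. The former gives $d^u(f_sz_j,f_sv_j)\le\rho'$ for $s\in[0,t_j]$, hence $d_K(f_sz_j,f_sv_j)\le e^\Lambda\rho'$. An induction on $j$, using the product-structure identity $f_{t_{j-1}+\tau_{j-1}}(z_{j-1})\in W^{cs}_{\rho'}(z_j)$, shows that $f_{S_j}(y)$ lies in $W^{cs}$ of $z_j$ with some radius $r_j$; forward non-expansion of $d^{cs}$ transports this control across $[0,t_j]$, and triangle inequality yields $d_K(f_{S_j+s}(y),f_sv_j)\le r_j+e^\Lambda\rho'$.

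The principal technical obstacle is a uniform-in-$j$ bound on $r_j$: a naive telescope picks up an extra $\rho'$ at every transition, so one obtains only $r_j=(j-1)\rho'$ and an $O(N)$ accumulation that would ruin the uniform scale. The remedy, mirroring the nonpositively curved case treated in \cite{BCFT2017}, is to exploit the localized hyperbolicity at each anchor: the condition $\lt(v_j)\ge\eta$ provides, via Lemma \ref{lem:3.8BCFT}, a definite exponential contraction of $d^s$ over the $[-T,T]$-window around $v_j$, and this contraction is strong enough to swallow the per-step $\rho'$ addition and to convert the arithmetic accumulation into a geometric series bounded uniformly in $N$ by some $C\rho'$ with $C=C(T,\Lambda,\eta)$. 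Once this uniform estimate is in place, the triangle-inequality computation above closes with $d_K(f_{S_j+s}(y),f_sv_j)\le(C+e^\Lambda)\rho'$, and choosing $\rho'=\rho/(C+e^\Lambda)$ completes the proof of specification for $\ct(\eta)$ at scale $\rho$; the inclusion $\gt(\eta)\subset\ct(\eta)$ noted at the outset then delivers the analogous conclusion for $\gt(\eta)$.
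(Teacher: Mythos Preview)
Your proposal correctly identifies the key obstacle---the potential $O(N)$ accumulation in the $cs$-direction---but the mechanism you propose to control it does not work, and this is a genuine gap. You claim that the condition $\lt(v_j)\ge\eta$ yields, via Lemma~\ref{lem:3.8BCFT}, a definite contraction of $d^s$ over the $[-T,T]$-window around $v_j$. There are two problems. First, your shadowing orbit never traverses the backward half of that window: by your own construction, the orbit of $y$ tracks $(v_j,t_j)$ only on $[0,t_j]$, so the portion $[f_{-T}v_j,v_j]$ is irrelevant. Second, and more seriously, the hypothesis $\int_{-T}^{T}\lambda(f_\tau v_j)\,d\tau\ge\eta$ places no lower bound whatsoever on $\int_0^{t_j}\lambda(f_\tau v_j)\,d\tau$ (or on the corresponding integral of $\tilde\lambda$), so Lemma~\ref{lem:3.8BCFT} gives no definite contraction along the segment. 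Indeed, membership in $\ct(\eta)$ constrains only the endpoints; an orbit segment $(v,t)\in\ct(\eta)$ may have $\lambda\equiv 0$ on all of $[0,t]$. Your appeal to ``the nonpositively curved case treated in \cite{BCFT2017}'' is also misplaced: that paper does \emph{not} extract contraction from the endpoints of the given segments.

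The paper's proof resolves this by a device you omit entirely: it fixes once and for all a reference orbit segment $(v_0',t_0')\in\gt(\eta)$---crucially in $\gt(\eta)$, not merely $\ct(\eta)$---and routes the shadowing orbit through (a slight extension of) this reference segment between every consecutive pair $(v_j,t_j)$, $(v_{j+1},t_{j+1})$. Because $(v_0',t_0')\in\gt(\eta)$, Lemma~\ref{lem:312} guarantees a fixed contraction factor $\alpha=\exp(\eta t_0'/4T)>1$ each time the orbit passes through the reference segment, and this converts the telescoping sum into a convergent geometric series $\rho'\sum_i\alpha^{-i}$. The transition time $\tau(\rho)$ then absorbs two applications of Proposition~\ref{cor: connecting orbits} plus the length $t_0$ of the reference segment. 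Without this reference-orbit trick (or some equivalent source of \emph{uniform} hyperbolicity independent of the given segments), your argument cannot close.
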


\begin{proof}
We begin by fixing any $(v_{0}',t_{0}')\in\gt(\eta)$ as our reference
orbit. To simplify the notation, we set $v_{0}:=f_{-T}v_{0}'$ and
$t_{0}:=2T+t_{0}'$. Then $(v_{0},t_{0})$ is just an extended orbit
segment obtained from of $(v_{0}',t_{0}')$. 

Using the uniform continuity of $\lambda$, we can choose $\d>0$
such that $|\lambda(v)-\lambda(w)|<\frac{\eta t_{0}'}{4Tt_{0}}$ whenever
$d_{K}(v,w)<\d$. For such choice of $\d$, for any $w\in B_{t_{0}}(v_{0},\d)$
we have 
\begin{align*}
2T\int_{0}^{t_{0}}\lambda(f_{s}w)ds & \geq2T\int_{0}^{t_{0}}\lambda(f_{s}v_{0})ds-(2T)t_{0}\cdot\frac{\eta t_{0}'}{4Tt_{0}},\\
 & \geq\eta t_{0}'-\frac{\eta t_{0}'}{2}=\frac{\eta t_{0}'}{2}.
\end{align*}
In particular, setting $\alpha:=\exp(\frac{\eta t_{0}}{4T})>1$, we
have that for any $w,w'\in f_{-t_{0}}W_{\d}^{u}(f_{t_{0}}v_{0})$
\begin{equation}
\alpha d^{u}(f_{t_{0}}w,f_{t_{0}}w')\leq d^{u}(w,w')
\end{equation}
 Given an arbitrary small scale $0<\rho<\min(\d,\ep)$, we will show
that we can set $\rho':=\rho/\big(6e^{\Lambda}\sum\limits _{i=1}^{\infty}\alpha^{-i}\big)$
so that ${\cal C}_{T}(\eta)$ has specification with scale $\rho$
with corresponding $\tau(\rho):=t_{0}+2a$ with $a:=a(\rho')$ from
Proposition \ref{cor: connecting orbits}.

Let $(v_{1},t_{1}),\ldots,(v_{n},t_{n})\in{\cal C}_{T}(\eta)$ be
given. We will inductively define orbit segments $(w_{j},s_{j})$
such that for each $1\leq j\leq n$, we have %\begin{enumerate}%\item[(A)] %$w_j \in W^u_\rho(v_1),$
\begin{equation}
f_{s_{j}}w_{j}\in W_{\rho'}^{cs}(f_{t_{j}}v_{j}).\label{eq: inductive hyp}
\end{equation}
%\item[(B)] $(w_j,s_j)$ $\rho$-shadows orbit segments $(v_i,t_i)$ in order upto $i = j$ with $(v_0,t_0)$ inserted in between each consecutive $(v_i,t_i)$. %\end{enumerate}

We begin by setting $(w_{1},s_{1}):=(v_{1},t_{1})$. Supposing that
$(w_{j},s_{j})$ satisfies \eqref{eq: inductive hyp}, we want to
define $(w_{j+1},s_{j+1})$ in a way that the orbit of $w_{j+1}$
closely shadows that of $w_{j}$ for time $s_{j}$, then jumps (via
Proposition \ref{cor: connecting orbits} with transition time $\leq a$)
to $v_{0}$ and shadows $v_{0}$ for time $t_{0}$, then jumps to
(again via Proposition \ref{cor: connecting orbits}) and shadows
$w_{j+1}$ for time $s_{j+1}$. 

Since Corollary only allows one jump at a time, we define an auxiliary
orbit segments 
\[
(u_{j},l_{j}):=\big(f_{-s_{j}}[f_{s_{j}}w_{j},v_{0}]_{\tau_{j}},s_{j}+\tau_{j}+t_{0}\big)
\]
by applying Proposition \ref{cor: connecting orbits} to $f_{s_{j}}w_{j}$
and $v_{0}$. Note that Corollary can be successfully applied because
$f_{s_{j}}w_{j}\in W_{\rho'}^{cs}(f_{t_{j}}v_{j})$ from \eqref{eq: inductive hyp}
and $f_{t_{j}}v_{j}\in{\rm Reg_{T}(\eta)}$ from $(v_{j},t_{j})\in{\cal C}_{T}(\eta)$.
Also, from its definition, the orbit segment $(u_{j},l_{j})$ satisfies
(1) and (2). Moreover, $f_{l_{j}}u_{j}\in W_{\rho'}^{cs}(v_{0})$
because $f_{s_{j}+\tau_{j}}u_{j}\in W_{\rho'}^{cs}(v_{0})$ and $d^{s}$
doesn't increase in forward time (quote definition of no focal points).

We then apply Proposition \ref{cor: connecting orbits} again to $f_{l_{j}}u_{j}$
and $v_{j+1}$ to obtain 
\[
(w_{j+1},s_{j+1}):=\big(f_{-l_{j}}[f_{l_{j}}u_{j},v_{j+1}]_{\tau_{j}'},l_{j}+\tau_{j}'+t_{j+1}\big).
\]
From the same reasoning as in the construction of $(u_{j},l_{j})$,
the new orbit segment $(w_{j+1},s_{j+1})$ is well-defined, $f_{s_{j+1}}w_{j+1}\in W_{\rho'}^{cs}(f_{t_{j+1}}v_{j+1})$,
and satisfies (1), (2), and (3). %shadows each $(v_i,t_i)$ upto $i =j+1$ with transition time bounded above by $2a+t_0$. In particular, $(w_{j+1},s_{j+1})$ satisfies  

Now we show that $(w_{j},s_{j})$ constructed as above shadows each
$(v_{i},t_{i})$ up to $i=j$ with scale $\rho'$; that is, $d_{t_{i}}(f_{s_{i}-t_{i}}w_{j},v_{i})<\rho$.
From the construction, notice that for any $i\leq m\leq j$, we have
\[
d^{u}(f_{s_{i}}w_{m},f_{s_{i}}u_{m})\leq\rho'\alpha^{-(m-i)}.
\]
This is because $d^{u}(f_{s_{m}}w_{m},f_{s_{m}}u_{m})\leq\rho'$ from
the construction of $u_{m}$ and each time $f_{s_{m}}u_{m}$ and $f_{s_{m}}w_{m}$
pass through the reference orbit $(v_{0},t_{0})$ in backward time,
their $d^{u}$ distance decrease by a factor of at least $\alpha$.
Similarly, we have 
\[
d^{u}(f_{s_{i}}u_{m},f_{s_{i}}w_{m+1})\leq\rho'\alpha^{-(1+m-i)}.
\]
Hence, for any $i\leq j$, we can uniformly bound the $d^{u}$ distance
$d^{u}(f_{s_{i}}w_{j},f_{s_{i}}w_{i})$ by $\frac{\rho}{3e^{\Lambda}}$
: 
\begin{align*}
d^{u}(f_{s_{i}}w_{j},f_{s_{i}}w_{i}) & \leq\sum\limits _{m=i}^{j-1}d^{u}(f_{s_{i}}w_{m}.f_{s_{i}}w_{m+1}),\\
 & \leq\sum\limits _{m=i}^{j-1}d^{u}(f_{s_{i}}w_{m},f_{s_{i}}u_{m})+d^{u}(f_{s_{i}}u_{m},f_{s_{i}}u_{m},f_{s_{i}}w_{m+1}),\\
 & \leq\rho'\sum\limits _{m=i}^{j-1}\alpha^{-(m-i)}+\rho'\sum\limits _{m=i}^{j-1}\alpha^{-(1+m-i)},\\
 & \leq\frac{\rho}{3e^{\Lambda}},
\end{align*}
where the last inequality is due to the definition of $\rho'$. From
the relations among various metrics \eqref{eq: metric comparison},
we obtain that 
\begin{align*}
d_{t_{i}}(f_{s_{i}-t_{i}}w_{j},v_{i}) & \leq d_{t_{i}}(f_{s_{i}-t_{i}}w_{j},f_{s_{i}-t_{i}}w_{i})+d_{t_{i}}(f_{s_{i}-t_{i}}w_{i},v_{i}),\\
 & \leq\frac{\rho}{3e^{\Lambda}}\cdot e^{\Lambda}+\rho'\leq\rho,
\end{align*}
where we have used that $d^{s}(f_{s_{i}-t_{i}}w_{i},v_{i})\leq\rho'$
from the construction of $w_{i}$. Since $\rho$ was arbitrary, this
finishes the proof. 
\end{proof}

One useful corollary of the specification property is the closing
lemma which creates lots of periodic orbits, and later allows ${\cal C}_{T}(\eta)$
to be approximated by regular periodic orbits. The proof of the closing
lemma below follows the same idea as \cite[Lemma 4.7]{BCFT2017}.

\begin{lem}
[The closing lemma]\label{lem:closing_lem} For any given $T,\eta,\ep>0$,
there exists $s=s(\ep)>0$ such that for any $(v,t)\in{\cal C}_{T}(\eta)$
there exists $w\in B_{t}(v,\ep)$ and $\tau\in[0,s(\ep)]$ with $f_{t+\tau}w=w$. 
\end{lem}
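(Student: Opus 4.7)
The plan is to combine the specification property (Proposition \ref{prop: specification}) with the uniform local product structure near $\Reg_T(\eta)$ via a standard Anosov-style closing argument, in the spirit of \cite[Lemma 4.7]{BCFT2017}: specification produces an approximately closed orbit, and the product structure at the endpoints promotes it to a true periodic orbit.

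First I fix the constants. Let $\delta_0$ and $\kappa_0$ be the uniform local product structure constants on a $\delta_0$-neighborhood of $\Reg_T(\eta)$ supplied by the lemma preceding Proposition \ref{cor: connecting orbits}, and set $\Lambda := \max_{v\in T^1S} k^u(v)$. Given $\ep > 0$, choose $\rho = \rho(\ep) \in (0, \delta_0/2)$ small enough that (i) the contraction step below is valid, and (ii) $(2\kappa_0 + 1)\rho < \ep$ so that the resulting periodic point $\ep$-shadows $(v,t)$. Define $s(\ep) := \tau(\rho) + 1$, where $\tau(\rho)$ is the transition constant from the specification at scale $\rho$.

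I then apply the specification at scale $\rho$ to the two-element sequence $(v,t), (v,t) \in \mathcal{C}_T(\eta)$, obtaining $y \in T^1 S$ and a transition time $\tau_1 \in [0, \tau(\rho)]$ such that $y \in B_t(v, \rho)$ and $f_{t+\tau_1}(y) \in B_t(v, \rho)$. Writing $F := f_{t+\tau_1}$, both $y$ and $Fy$ lie within $\rho$ of $v \in \Reg_T(\eta)$, so the uniform product structure applies around $v$ and $f_t v$.

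The next step promotes $y$ to an exact periodic point. Fix a local Poincar\'e section $\Sigma$ through $v$ transverse to the flow direction $E^c(v)$, small enough that the first-return map $R$ is well defined on a neighborhood of $y' := \Sigma \cap \{f_s y : |s|\le 1\}$, with return time in a small window around $t + \tau_1$. Because $v, f_tv \in \Reg_T(\eta)$, Lemma \ref{lem: uniform angle} provides a uniform angle gap between $E^u$ and $E^s$, while the bounds in Lemma \ref{lem:312} together with $\lt(v), \lt(f_tv) \ge \eta$ give uniform hyperbolicity of $DR$ at $y'$. In local bracket coordinates on $\Sigma$, identifying a neighborhood of $y'$ with $W^u_{\kappa_0\rho}(y') \times W^s_{\kappa_0\rho}(y')$, the map $R$ decouples as a hyperbolic skew product; a Banach fixed-point argument (contraction on $W^s$ coupled with contraction of the inverse on $W^u$) yields a unique fixed point $w' \in \Sigma$ of $R$ with $d_K(w', y') \le 2\kappa_0\rho$. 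The $F$-orbit of $w'$ is then a periodic orbit of the flow with period $t + \tau$ for some $\tau \in [0, s(\ep)]$.

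Finally I verify $w := w' \in B_t(v, \ep)$. From the construction, $w$ lies (up to a bounded flow shift absorbed in $\tau$) in $W^u_{\kappa_0\rho}(y) \cap W^s_{\kappa_0\rho}(y)$. The no-focal-points condition gives that $d^s(f_s w, f_s y)$ is non-increasing for $s \ge 0$, and by periodicity of $w$ and the near-periodicity of $y$, the unstable displacement $d^u(f_s w, f_s y)$ is non-increasing in backward time from $s = t$; hence both distances remain at most $\kappa_0\rho$ on $[0,t]$. Combining this with \eqref{eq: metric comparison} and the triangle inequality against $y \in B_t(v, \rho)$ gives $d_K(f_s w, f_s v) \le (2\kappa_0 + 1)\rho < \ep$ for every $s \in [0, t]$, as required.

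The main obstacle is the fixed-point step: with only non-strict monotonicity of $d^s, d^u$ along orbits (no strict convexity of Jacobi fields), one cannot extract contraction rates from the interior of the orbit segment. The hypothesis $(v,t) \in \mathcal{C}_T(\eta)$ is precisely what circumvents this, since the uniform lower bound on $\lt$ at the endpoints supplies both a uniform angle gap (Lemma \ref{lem: uniform angle}) and uniform hyperbolicity of the return map on $\Sigma$, allowing the Banach fixed-point argument to run with constants independent of the interior behavior of $(v,t)$.
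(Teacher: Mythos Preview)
There is a genuine gap in the fixed-point step. You claim that ``the bounds in Lemma \ref{lem:312} together with $\lt(v), \lt(f_tv) \ge \eta$ give uniform hyperbolicity of $DR$ at $y'$,'' but Lemma \ref{lem:312} applies only to orbit segments in $\mathcal{G}_T(\eta)$, not in $\mathcal{C}_T(\eta)$. Membership in $\mathcal{C}_T(\eta)$ constrains only the \emph{endpoints}: it gives the uniform angle gap and local product structure there (Lemma \ref{lem: uniform angle}), but it says nothing about contraction or expansion along $[0,t]$. An orbit segment in $\mathcal{C}_T(\eta)$ may spend nearly all of $[0,t]$ arbitrarily close to $\Sing$, so the first-return map $R$ to your section $\Sigma$ need not be uniformly hyperbolic: along $W^s$ the no-focal-points condition yields only $d^s(f_{t+\tau_1}u,f_{t+\tau_1}u') \le d^s(u,u')$ with no definite rate, and likewise on $W^u$. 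Neither a Banach contraction nor a Brouwer argument closes up under a non-strict inequality; you need a definite factor $<1$ to force the image of the local stable disk back inside itself after composing with the holonomy.

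The paper's proof supplies exactly this missing contraction by inserting a fixed reference segment $(v_0,t_0)$ coming from $\mathcal{G}_T(\eta)$: specification is applied not to the pair $(v,t),(v,t)$ but to the sequence $(v,t),(v_0,t_0),\ldots,(v_0,t_0),(v,t)$ with $n$ copies of the reference orbit. Each pass through $(v_0,t_0)$ contributes a uniform contraction factor $\alpha = \exp(\eta t_0'/4T) > 1$ on $W^u$ (and the analogous expansion on $W^s$), so choosing $n$ with $\alpha^n > 2\kappa$ makes the relevant self-maps of the local leaves genuinely contracting, after which two applications of Brouwer produce the periodic point. Your argument can be repaired along the same lines, but as written the appeal to Lemma \ref{lem:312} for $(v,t)\in\mathcal{C}_T(\eta)$ is invalid and the hyperbolicity of $R$ is unsubstantiated.
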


\begin{proof}
The proof is based on Brouwer's fixed point theorem. We begin by fixing
$(v_{0}',t_{0}')\in\gt$ and set $(v_{0},t_{0}):=(f_{-T}v_{0}',2T+t_{0}')$
as in Proposition \ref{prop: specification}. Reasoning as in Proposition
\ref{prop: specification}, there exists $\d>0$ such that the distance
between any $w,w'\in W_{\d}^{s}(v_{0})$ contract (and likewise expand
for any $w,w'\in f_{-t_{0}}W_{\d}^{u}(f_{t_{0}}v_{0})$ under $f_{t_{0}}$)
by factor $\alpha:=\exp(\frac{\eta t_{0}}{4T})$.

Let $\ep=\ep_{0}/4$. We may suppose $\ep$ is small enough that ${\cal C}_{T}(\eta)$
has local product structure at scale $\ep$ and constant $\kappa$.
Let $n\in\N$ such that $\alpha^{n}>2\kappa$. Also, we may assume
$nt_{0}\geq1+\ep$ without loss of generality (otherwise, simply increase
$n$).

Now, for any $(v,t)\in{\cal C}_{T}(\eta)$, we use Proposition \ref{prop: specification}
to find $w_{0}\in B(v,\ep/4\kappa)$ whose orbits shadows $(v,t)$
once, then $(v_{0},t_{0})$ $n$-times, and then $(v,t)$ once again
at scale $\ep/4\kappa$ with each transition time bounded above by
$\widetilde{\tau}$. Since $w_{0}$ has to eventually shadow $(v,t)$
again, there exists $\tau_{0}\in[nt_{0},n(t_{0}+\widetilde{\tau})+\widetilde{\tau}]$
such that $f_{t+\tau_{0}}w_{0}\in B(v,\ep/4\kappa)$. From the triangle
inequality pivoted at $v$, we have $d_{K}(w_{0},f_{t+\tau_{0}}w_{0})<2\cdot\ep/4\kappa=\ep/2\kappa$.
Also, using the forward contraction of the stable manifold near the
reference orbit $(v_{0},t_{0})$, for any $u\in W_{\ep}^{s}(w_{0})$,
we have 
\begin{align*}
d_{K}(f_{t+\tau_{0}}u,w_{0}) & \leq d_{K}(f_{t+\tau_{0}}u,f_{t+\tau_{0}}w_{0})+d_{K}(f_{t+\tau_{0}}w_{0},w_{0}),\\
 & \leq\alpha^{-n}d_{K}(u,w_{0})+\ep/2\kappa\leq\ep/\kappa.
\end{align*}
Since $v$ has local product structure at scale $\ep$ with constant
$\kappa$ and $\w_{0}$ is $\ep/4\kappa$-close to $v$, the point
$W_{\ep}^{s}(w_{0})\cap W_{\ep}^{cu}(f_{t+\tau_{0}}u)$ is well-defined
and belongs to $W_{\ep}^{s}(w_{0})$. In particular, the continuous
map from $W_{\ep}^{s}(w_{0})$ to itself given by 
\[
u\mapsto W_{\ep}^{s}(w_{0})\cap W_{\ep}^{cu}(f_{t+\tau_{0}}u)
\]
is well-defined. Hence, by Brouwer fixed point theorem, we can find
a fixed point $w_{1}\in W_{\ep}^{s}(w_{0})$ under this map. Since
the map is not given by $f_{s}$ for some $s$, the fixed point $w_{1}$
is not quite $\F$ invariant yet. Instead, its characterizing property
is that $w_{1}\in W_{\ep}^{cu}(f_{t+\tau_{0}}w_{1})$.

By adjusting $\tau_{0}$ by a unique small constant less than $\ep$,
we have $w_{1}\in W_{\ep}^{u}(f_{t+\tau}w_{1})$ where $\tau$ is
adjusted constant from $\tau_{0}$. Since the unstable manifold shrinks
in backward time near $(v_{0},t_{0})$ by factor $\alpha$, this time
we obtain a continuous map defined by the flow $f_{-t-\tau}$: 
\[
f_{-t-\tau}\colon W_{2\ep}^{u}(f_{t+\tau}w_{1})\to W_{2\ep}^{u}(f_{t+\tau}w_{1}).
\]
Hence, the Brouwer fixed point theorem applies again and we obtain
$w\in W_{2\ep}^{u}(f_{t+\tau}w_{1})$ with $f_{t+\tau}w=w$. We are
left to show that $d_{t}(v,w)\leq\ep_{0}$. This follows because 
\begin{align*}
d_{t}(v,w) & \leq d_{t}(v,w_{0})+d_{t}(w_{0},w_{1})+d_{t}(w_{1},w),\\
 & \leq\ep/4\kappa+d^{s}(w_{0},w_{1})+d^{u}(w_{1},w),\\
 & \leq\ep/4\kappa+\ep+2\ep\leq\ep_{0}.
\end{align*}
Here, we have used \eqref{eq: metric comparison} and the fact that
$d_{t}(w_{1},w)\leq d^{u}(f_{t+1}w_{1},f_{t+1}w)\leq d^{u}(f_{t+\tau}w_{1},f_{t+\tau}w)$
because $\tau\geq nt_{0}-\ep\geq1$. Lastly, setting $s(\ep_{0}):=n(t_{0}+\widetilde{\tau})+\widetilde{\tau}+\ep$,
we are done. 
\end{proof}

Using the same argument as \cite[Corollary 4.8]{BCFT2017}, we have
the following corollary of the closing lemma.

\begin{cor}
\label{cor: closing_lemma_regular}For any given $T,\eta>0$, there
exist $\ep=\ep(T,\eta)>0$ such that for any $\ep_{0}<\ep$ there
exists $s=s(\ep_{0})>0$ satisfying the following: for any $(v,t)\in{\cal C}_{T}(\eta)$
there exists

\begin{enumerate}[font=\normalfont]

\item a regular vector $w$ with $w\in B_{t}(v,\ep_{0})$, and

\item $\tau\in[0,s]$ with $f_{t+\tau}w=w$.

\end{enumerate}\end{cor}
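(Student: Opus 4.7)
The plan is to combine the closing lemma (Lemma \ref{lem:closing_lem}) with the continuity of $\lambda_T$ and the characterization of $\Sing$ from Lemma \ref{lem:lamb_T_and_sing}, which asserts that $v \in \Sing$ if and only if $\lambda_T(v) = 0$ for every $T$. The key observation is that if a vector $w$ is close enough to some point in $\mathrm{Reg}_T(\eta)$, then $\lambda_T(w) > 0$, which already forces $w$ to lie in $\Reg$. So the corollary should follow by applying the closing lemma at a scale fine enough that the periodic shadowing point inherits a positive value of $\lambda_T$ from the endpoints of the orbit segment $(v,t) \in \mathcal{C}_T(\eta)$.

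First I would use the uniform continuity of $\lambda_T$ on the compact space $T^1S$ to choose $\delta = \delta(T,\eta) > 0$ such that $|\lambda_T(v) - \lambda_T(w)| < \eta/2$ whenever $d_K(v,w) < \delta$. Then for any $v \in \mathrm{Reg}_T(\eta)$, i.e., with $\lambda_T(v) \geq \eta$, any $w$ with $d_K(v,w) < \delta$ automatically satisfies $\lambda_T(w) \geq \eta/2 > 0$, and so $w \in \Reg$ by the contrapositive of Lemma \ref{lem:lamb_T_and_sing}.

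Next, let $\ep_1 > 0$ be a scale at which Lemma \ref{lem:closing_lem} applies, and set $\ep := \min(\ep_1,\delta)$. For any $\ep_0 < \ep$ and any $(v,t) \in \mathcal{C}_T(\eta)$, applying Lemma \ref{lem:closing_lem} yields $w \in B_t(v,\ep_0)$ and $\tau \in [0, s(\ep_0)]$ with $f_{t+\tau}w = w$. Since $w \in B_t(v,\ep_0)$ gives in particular $d_K(w,v) < \ep_0 < \delta$ by taking time parameter $0$ in the definition of the Bowen ball, and since $v \in \mathrm{Reg}_T(\eta)$ by definition of $\mathcal{C}_T(\eta)$, the previous paragraph shows $w$ is regular.

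I do not anticipate a substantial obstacle: the technical work is already carried out in Lemma \ref{lem:closing_lem}, which constructs the periodic shadowing point via Brouwer's fixed point theorem, and in Lemma \ref{lem:lamb_T_and_sing}, which translates positivity of $\lambda_T$ into membership in $\Reg$. The only delicate point is matching the scales, namely ensuring that the Bowen-ball proximity produced by the closing lemma is strong enough to preserve regularity; this is immediate from the continuity of $\lambda_T$ on the compact unit tangent bundle.
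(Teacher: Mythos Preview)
Your argument is correct and in fact slightly cleaner than the paper's. The paper does not work with $\lambda_T$ directly at $v$; instead it observes that $\lambda_T(v)\geq\eta$ forces $\lambda(f_\sigma v)>0$ for some $\sigma\in[-T,T]$, shifts to $v'=f_\sigma v$, checks that the shifted orbit segment lies in $\mathcal{C}_{2T}(\eta)$, and then applies the closing lemma there, concluding regularity of $w$ from $\lambda(w)>0$. Your route bypasses the shift entirely: since $\lambda_T$ is continuous and $v\in\Reg_T(\eta)$, proximity alone gives $\lambda_T(w)>0$, and Lemma~\ref{lem:lamb_T_and_sing} immediately yields $w\in\Reg$. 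This avoids the bookkeeping of passing to $\mathcal{C}_{2T}(\eta)$ and tracking the shifted time parameter. One cosmetic point: Lemma~\ref{lem:closing_lem} is stated for all scales $\ep>0$, so your $\ep_1$ is not actually a constraint and you may simply take $\ep=\delta$.
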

\begin{proof}
From the uniform continuity of $\lambda$, there exists $\ep=\ep(\eta)>0$
such that for all $w\in B(v,\ep)$, we have $\lambda(w)>0.$

Since $v\in{\cal C}_{T}(\eta)$, there exists $v'=f_{\sigma}v$ for
some $\sigma\in[-T,T${]} such that $\lambda(v')>\eta$. Also, we
must have $(v',t+\sigma)\in{\cal C}_{2T}(\eta)$ from the definition
of ${\cal C}_{T}(\eta)$. By Lemma \ref{lem:closing_lem}, for any
$2T,\eta,\ep_{0}>0$, there exists $s=s(\ep_{0})>0$ such that $w\in B_{t+\sigma}(v',\ep_{0})$
and $\tau\in[0,s(\ep_{0})]$ such that $f_{t+\sigma+\tau}(w)=w.$ 

Also, it follows that $w$ is a regular vector because $\lambda(w)>0$
from $d_{K}(v',w)<\ep_{0}<\ep.$
\end{proof}

\section{The Bowen property\label{sec:The-Bowen-property}}

In this section, we prove the Bowen property for H\"{o}lder potentials
and the geometric potential $\vp^{u}$. Lemmas in this section are
exactly the same as their corresponding lemmas in \cite{BCFT2017},
and proofs follow mutatis mutandis. However, for the completeness
we still give proofs there.

\subsection{The Bowen property for H\"{o}lder potentials}

\begin{defn}
A function $\vp:T^{1}S\to\R$ is called \textit{H\"{o}lder}\textit{
along stable leaves} if there exist $C,\theta,\delta>0$ such that
for $v\in T^{1}S$ and $w\in W_{\delta}^{s}(v)$, one has $|\vp(v)-\vp(w)|\leq Cd^{s}(v,w)^{\theta}$.
Similarly, $\vp$ is called \textit{H\"{o}lder}\textit{ along unstable
leaves }if there exist $C,\theta,\delta>0$ such that for $v\in T^{1}S$
and $w\in W_{\delta}^{u}(v)$, one has $|\vp(v)-\vp(w)|\leq Cd^{u}(v,w)^{\theta}$. 
\end{defn}

Since $d_{K}$ is equivalent to $d^{u}$ and $d^{s}$ along unstable
and stable leaves when $\delta$ is small, we know $\vp$ is\textit{
}H\"{o}lder if and only if $\vp$ is H\"{o}lder along stable and
unstable leaves.

\begin{defn}
A function $\vp$ is said to have the \textit{Bowen property along
stable leaves} with respect to ${\cal C}\subset T^{1}S\times[0,\infty)$
if there exist $\delta,K>0$ such that 
\[
\sup\left\{ |\Phi(v,t)-\Phi(w,t)|:\ (v,t)\in{\cal C},\ w\in W_{\delta}^{s}(v)\right\} \leq K.
\]
 Similarly, a function $\vp$ is said to have the \textit{Bowen property
along unstable leaves} with respect to ${\cal C}\subset T^{1}S\times[0,\infty)$
if there exist $\delta,K>0$ such that 
\[
\sup\left\{ |\Phi(v,t)-\Phi(w,t)|:\ (v,t)\in{\cal C},\ w\in f_{-t}W_{\delta}^{u}(f_{t}v)\right\} \leq K.
\]

\end{defn}

\begin{lem}
\label{lem:holder_bowen_unstable_stable}For any $T,\eta>0$, if $\varphi$
is H\"{o}lder along stable leaves (resp. unstable leaves), then $\varphi$
has the Bowen property along stable leaves (resp. unstable leaves)
with respect to $\gt(\eta)$. \end{lem}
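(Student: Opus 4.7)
The plan is to deduce the Bowen property from the uniform exponential contraction/expansion on good orbit segments, which is exactly what Lemma \ref{lem:39} provides. The statement is symmetric between the stable and unstable cases, so I would treat the stable case in detail and then indicate the unstable case as \emph{mutatis mutandis}.

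First, I would fix a H\"older exponent $\theta$ and H\"older constant $C_1$ for $\varphi$ along stable leaves, valid on stable balls of radius at most some $\delta_0$. Shrinking $\delta_0$ if necessary, I would also choose $\delta = \delta(\eta)$ as in Lemma \ref{lem:39}, so that the conclusion of Lemma \ref{lem:39} applies to stable pairs inside $W^s_\delta(v)$ whenever $(v,t)\in\gt(\eta)$; the crucial output is the estimate
\[
d^s(f_\tau w, f_\tau v) \;\leq\; C\, d^s(v,w)\, e^{-\frac{\eta}{4T}\tau}
\qquad (0\le \tau \le t),
\]
with constant $C=C(T,\eta)$ independent of the orbit segment.

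Next, for $(v,t)\in\gt(\eta)$ and $w\in W^s_\delta(v)$, I would apply the pointwise H\"older estimate inside the integral defining $\Phi$ and plug in the exponential decay of $d^s(f_\tau v,f_\tau w)$:
\[
|\Phi(v,t)-\Phi(w,t)| \;\leq\; \int_0^t C_1\, d^s(f_\tau v,f_\tau w)^{\theta}\, d\tau
\;\leq\; C_1 (C\delta)^{\theta} \int_0^t e^{-\frac{\eta\theta}{4T}\tau}\, d\tau.
\]
Since the geometric integral converges to a finite value independent of $t$, I obtain a uniform bound
\[
|\Phi(v,t)-\Phi(w,t)| \;\leq\; \frac{4T\, C_1 (C\delta)^{\theta}}{\eta\theta} \;=:\; K,
\]
which is precisely the Bowen property along stable leaves with parameters $(\delta,K)$.

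For the unstable case, the only change is that I would use the second half of Lemma \ref{lem:39}: for $w \in f_{-t}W^u_\delta(f_t v)$,
\[
d^u(f_\tau v, f_\tau w) \;\leq\; C\, d^u(f_t v, f_t w)\, e^{-\frac{\eta}{4T}(t-\tau)},
\]
so with $d^u(f_t v, f_t w)\le \delta$ the same integral is now dominated by a geometric sum in $(t-\tau)$, again yielding a constant independent of $t$. I do not expect a genuine obstacle here; the content is really in Lemma \ref{lem:39}, and this lemma just packages the telescoping of H\"older differences against uniform exponential contraction. The one place to be careful is to choose $\delta$ small enough simultaneously to fit inside the domain of H\"older continuity \emph{and} the domain where Lemma \ref{lem:39} applies, which is a single finite intersection of hypotheses.
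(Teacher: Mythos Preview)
Your proof is correct and essentially identical to the paper's own argument: both apply the exponential contraction from Lemma~\ref{lem:39} inside the integral, use the H\"older estimate pointwise, and sum the resulting geometric series to obtain the bound $K = C_1 C^{\theta}\delta^{\theta}\frac{4T}{\eta\theta}$, with $\delta$ taken as the minimum of the scales from the H\"older condition and Lemma~\ref{lem:39}.
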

\begin{proof}
It is a direct consequence of Lemma \ref{lem:312}. We prove the stable
leaves case, and for unstable leaves one uses the same argument.

Let $(v,t)\in\gt(\eta)$, $\delta_{1}>0$ be as in Lemma \ref{lem:312}
and $\delta_{2}>0$ be given by the H\"{o}lder continuity along stable
leaves. Then for $\delta=\min\{\delta_{1},\delta_{2}\}$ and $w\in W_{\delta}^{s}(v)$,
we have

\begin{align*}
\left|\Phi(v,t)-\Phi(w,t)\right| & \leq\int_{0}^{t}\left|\varphi(f_{\tau}v)-\varphi(f_{\tau}w)\right|{\rm d}\tau\leq\int_{0}^{t}C_{1}\cdot d^{s}(f_{\tau}v,f_{\tau}w)^{\theta}{\rm d}\tau\\
 & \leq\int_{0}^{t}C_{1}\cdot\left(Cd^{s}(v,w)\cdot e^{-\frac{\eta}{4T}\tau}\right)^{\theta}{\rm d}\tau\leq C_{1}\cdot C^{\theta}\cdot d^{s}(v,w)^{\theta}\int_{0}^{t}e^{\frac{-\eta\theta}{4T}\tau}{\rm d}\tau\\
 & \leq C_{1}\cdot C^{\theta}\delta^{\theta}\frac{4T}{\eta\theta}.
\end{align*}

\end{proof}

\begin{lem}
\label{lem:Bowen_stable_unstable_all}Given $T,\eta>0$, suppose $\varphi$
has the Bowen property along stable leaves and unstable leaves with
respect to $\gt(\frac{\eta}{2})$. Then $\varphi:T^{1}S\to\mathbb{R}$
has the Bowen property on $\gt(\eta)$. \end{lem}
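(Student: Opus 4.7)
The plan is to use the local product structure near the regular endpoints of $(v,t)\in\gt(\eta)$ to connect $v$ to $w$ via an auxiliary vector $u_0$ lying on $W^u(v)$ that differs from $w$ only by a tiny flow together with a stable move, and then apply the stable and unstable Bowen hypotheses on $\gt(\eta/2)$ to bound each piece separately.

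First I would fix $\delta_s,K_s$ and $\delta_u,K_u$ as the stable and unstable Bowen thresholds on $\gt(\eta/2)$, and $\delta_\ast$ as the scale from Lemma \ref{lem:312} that sends $B_t(v,\delta_\ast)$-points of a $\gt(\eta)$-orbit into $\gt(\eta/2)$. Since $(v,t)\in\gt(\eta)$ forces $\lt(v),\lt(f_tv)\geq\eta$ by letting $\tau\to 0^+$ in the two defining integral inequalities, both $v$ and $f_tv$ lie in $\regt$, and the local product structure applies at both endpoints at scale $\kappa\delta$; I can therefore choose $\delta>0$ small enough that $\kappa\delta$ lies below all the thresholds above. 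For $w\in B_t(v,\delta)$ I set
\[
u_0 := W^u_{\kappa\delta}(v)\cap W^{cs}_{\kappa\delta}(w),
\]
and write $u_0 = f_\tau z$ with $z\in W^s_{\kappa\delta}(w)$ and $|\tau|\leq\kappa\delta$. Telescoping gives
\[
|\Phi(v,t)-\Phi(w,t)| \leq |\Phi(v,t)-\Phi(u_0,t)|+|\Phi(u_0,t)-\Phi(z,t)|+|\Phi(z,t)-\Phi(w,t)|.
\]
The middle term is at most $2|\tau|\,\|\vp\|_\infty\leq 2\kappa\delta\|\vp\|_\infty$ directly from $u_0=f_\tau z$; the third term is at most $K_s$ by the stable hypothesis applied to $(w,t)\in\gt(\eta/2)$ (via Lemma \ref{lem:312}) with $z\in W^s_{\kappa\delta}(w)$; and the first term is at most $K_u$ by the unstable hypothesis applied to $(v,t)\in\gt(\eta)\subset\gt(\eta/2)$ with $u_0$, \emph{provided} $f_tu_0\in W^u_{\delta_u}(f_tv)$.

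The main obstacle is precisely this proviso, since $W^u$ expands in forward time and knowing $u_0\in W^u_{\kappa\delta}(v)$ at time $0$ says little about its position at time $t$. I would resolve it by passing through $d_K$: from $f_tu_0 = f_\tau(f_tz)$ with $f_tz\in W^s(f_tw)$ and $d^s(f_tz,f_tw)\leq d^s(z,w)\leq\kappa\delta$ by forward nonincrease of $d^s$, a triangle inequality in $d_K$ together with $d_K(f_tw,f_tv)\leq\delta$ yields $d_K(f_tu_0,f_tv) = O(\delta)$. Since $f_tu_0\in W^u(f_tv)$, $f_tv\in\regt$, and Lemma \ref{lem: uniform angle} guarantees a uniform angle gap between $E^u$ and $E^s$ on $\regt$, the intrinsic metric $d^u$ is comparable to $d_K$ along $W^u$ near $f_tv$, so $d^u(f_tu_0,f_tv) = O(\delta)\leq\delta_u$ after one final shrinking of $\delta$. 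Summing the three bounds yields the desired uniform constant $K = K_u+K_s+2\kappa\delta\|\vp\|_\infty$, which is the Bowen property on $\gt(\eta)$.
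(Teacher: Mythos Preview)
Your approach is essentially the paper's: use local product structure at the (regular) endpoints of $(v,t)\in\gt(\eta)$ to split $w\in B_t(v,\delta)$ into an unstable move, a short flow, and a stable move, telescope $|\Phi(v,t)-\Phi(w,t)|$, and invoke the stable/unstable Bowen hypotheses on $\gt(\eta/2)$ via Lemma~\ref{lem:312}. The only cosmetic difference is that you take $u_0\in W^u(v)\cap W^{cs}(w)$ while the paper takes $v'\in W^u(w)\cap W^{cs}(v)$; this is symmetric and immaterial.

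There is, however, a small gap in your resolution of the proviso $f_tu_0\in W^u_{\delta_u}(f_tv)$. You correctly obtain $d_K(f_tu_0,f_tv)=O(\delta)$, but the passage to $d^u(f_tu_0,f_tv)=O(\delta)$ via Lemma~\ref{lem: uniform angle} is not quite right. The comparison $d^u\leq C\,d_K$ along $W^u$ comes from the bounded geodesic curvature of horocycles (it holds globally on $T^1S$, not just near $\regt$), and it is stated only for points already in $W^u_{\delta_0}$; so invoking it when you only know $f_tu_0\in W^u(f_tv)$ and $d_K$ small is circular. The angle gap concerns the transversality of $E^u$ and $E^s$, which is a different issue. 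The paper closes this with a continuity argument that transplants verbatim to your setup: your own $d_K$-estimate in fact holds at \emph{every} $\sigma\in[0,t]$ (since $u_0\in W^{cs}_{\kappa\delta}(w)$ gives $d_K(f_\sigma u_0,f_\sigma w)\leq\kappa\delta$ and $w\in B_t(v,\delta)$), so if $d^u(f_tu_0,f_tv)>\delta_u$ then by continuity of $\sigma\mapsto d^u(f_\sigma u_0,f_\sigma v)$ there is $\sigma\in[0,t]$ with $d^u(f_\sigma u_0,f_\sigma v)\in(\delta_u,\delta_0]$, whence the global comparison yields $d^u\leq C(\kappa+1)\delta<\delta_u$, a contradiction for $\delta$ small. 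With this one-line fix your proof is complete and matches the paper's.
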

\begin{proof}
We first notice that since the curvature of horocycles is uniformly
bounded, $d_{K}$ and $d^{u}$ are equivalent on $W_{\delta}^{u}$
when $\delta$ small enough. Hence, there exist $\delta_{0},C>0$
such that $d^{u}(u,v)\leq Cd_{K}(u,v)$ for $v\in T^{1}S$ and $u\in W_{\delta_{0}}^{u}(v).$
Let $\delta_{1}>0$ be the radius that guarantees for any $(v,t)\in{\cal G}_{T}(\eta)$
the foliations $W^{u}$ and $W^{cs}$ have local product structure
at scale $\delta_{1}$ with constant $\kappa$. Let $\delta_{2}>0$
be the radius given in Lemma \ref{lem:312} that if $(v,t)\in\gt(\eta)$,
then for $w\in W_{\delta_{2}}^{u}(v)$ or $w\in W_{\delta_{2}}^{s}$,
we have $(w,t)\in\gt(\frac{\eta}{2})$. Let $\delta_{3},K>0$ be the
constants from the Bowen property for $\varphi$ along stable and
unstable leaves with respect to $\gt(\frac{\eta}{2})$. Without loss
generality we may assume $\delta_{3}<\delta_{0}$.

Let $\delta=\min\{\delta_{0},\delta_{1},\delta_{2},\frac{\delta_{3}}{2\kappa C},\frac{\delta_{3}}{\kappa}\}$,
$(v,t)\in\gt(\eta)$, and $w\in B_{t}(v,\delta)$.

By the local product structure there exists unique $v'\in W_{\kappa\delta}^{u}\cap W_{\kappa\delta}^{cs}(v)$.
Suppose for now that $f_{t}(v')\in W_{\delta_{3}}^{u}(f_{t}w)$. Then
we find $\rho\in[-\kappa\delta,\kappa\delta]$ such that $f_{\rho}(v')\in W_{\kappa\delta}^{s}(v)\subset W_{\delta_{3}}^{s}(v)$.
Thus, from the the Bowen property along the stable and unstable leaves,
we have 
\[
\left|\Phi(v,t)-\Phi(f_{\rho}v',t)\right|\leq K{\rm \ and}\ \left|\Phi(v',t)-\Phi(w,t)\right|\leq K.
\]
Then,
\begin{align*}
\left|\Phi(v,t)-\Phi(w,t)\right| & \leq\left|\Phi(v,t)-\Phi(f_{\rho}v',t)\right|+\left|\Phi(f_{\rho}v',t)-\Phi(v',t)\right|+\left|\Phi(v',t)-\Phi(w,t)\right|\\
 & \leq2K+2||\varphi||\cdot|\rho|.
\end{align*}

Now, let us establish $f_{t}(v')\in W_{\delta_{3}}^{u}(f_{t}w)$.
Supposing that it does not hold, then there exists $\sigma\in[0,t]$
such that 
\begin{equation}
\delta_{3}<d^{u}(f_{\sigma}v',f_{\sigma}w)\leq\delta_{0}.\label{eq:7.1}
\end{equation}

Notice that $v'\in W_{\kappa\delta}^{cs}(v)\subset B_{t}(v,\kappa\delta)$,
so 
\[
d_{K}(f_{\sigma}v',f_{\sigma}w)\leq d_{K}(f_{\sigma}v',f_{\sigma}v)+d_{K}(f_{\sigma}v,f_{\sigma}w)\leq2\kappa\delta.
\]
Thus, $d^{u}(f_{\sigma}v',f_{\sigma}w)\leq2C\kappa\delta<\delta_{3}$
and we have derived a contradiction to (\ref{eq:7.1}). 
\end{proof}

Summing up two lemmas above, we have the desired result for H\"{o}lder
potentials:

\begin{thm}
\label{thm:Holder_Bowen}If $\varphi$ is H\"{o}lder continuous,
then it has the Bowen property with respect to $\gt(\eta)$ for any
$T,\eta>0$.
\end{thm}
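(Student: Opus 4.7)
The plan is to deduce the theorem immediately from Lemmas \ref{lem:holder_bowen_unstable_stable} and \ref{lem:Bowen_stable_unstable_all} already established in this section. The bridge needed is essentially a one-line observation about metric equivalence on local leaves.

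First, I would check that a H\"{o}lder potential on $T^1S$ is automatically H\"{o}lder along both the stable and the unstable leaves. As recalled at the end of Section \ref{sec:Preliminaries-of-geometry}, on small enough local leaves $W^{s}_{\delta}(v)$ and $W^{u}_{\delta}(v)$, the intrinsic metrics $d^{s}$ and $d^{u}$ are uniformly equivalent to the Knieper metric $d_{K}$. Therefore a H\"{o}lder estimate $|\vp(v)-\vp(w)|\leq C d_{K}(v,w)^{\theta}$ converts, after adjusting the constant, into
\[
|\vp(v)-\vp(w)|\leq C' d^{s}(v,w)^{\theta} \quad \text{for } w\in W^{s}_{\delta}(v),
\]
and similarly along unstable leaves, so $\vp$ is H\"{o}lder along both foliations.

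Next, I would apply Lemma \ref{lem:holder_bowen_unstable_stable} with $\eta$ replaced by $\eta/2$: this gives that $\vp$ has the Bowen property along stable leaves with respect to $\gt(\eta/2)$ and, running the same argument on the unstable side, also the Bowen property along unstable leaves with respect to $\gt(\eta/2)$.

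Finally, this is exactly the hypothesis of Lemma \ref{lem:Bowen_stable_unstable_all}, so applying it yields the Bowen property of $\vp$ on $\gt(\eta)$ and completes the proof. There is no genuine obstacle here: the content of the theorem was packaged into the two lemmas, and the only subtlety is the step that goes from ``H\"{o}lder in $d_{K}$'' to ``H\"{o}lder along leaves'' — which is immediate once one notes the local equivalence of the three metrics on stable/unstable plaques.
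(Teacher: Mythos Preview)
Your proposal is correct and matches the paper's approach exactly: the paper simply states that the theorem follows by ``summing up'' Lemmas \ref{lem:holder_bowen_unstable_stable} and \ref{lem:Bowen_stable_unstable_all}, having already observed (just before Lemma \ref{lem:holder_bowen_unstable_stable}) that H\"{o}lder in $d_{K}$ is equivalent to H\"{o}lder along stable and unstable leaves by the local metric equivalence. Your explicit insertion of the $\eta/2$ step is the right way to chain the two lemmas together.
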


\subsection{The Bowen property for the geometric potential }

\begin{defn}
The \textit{geometric potential} $\vp^{u}:T^{1}S\to\R$ is defined
as: for $v\in T^{1}S$ 
\[
\varphi^{u}(v):=-\lim_{t\to0}\frac{1}{t}{\displaystyle \log\det(}\left.df_{t}\right|_{E^{u}(v)})=-\left.\frac{d}{dt}\right|_{t=0}{\displaystyle \log\det(}\left.df_{t}\right|_{E^{u}(v)}).
\]

\end{defn}

In general, we do not know if $\vp^{u}$ is H\"{o}lder continuous.
There are some partial results under the nonpositively curved assumption;
however, not much is known in the no focal points setting. Nevertheless,
in this subsection we prove $\vp^{u}$ has the Bowen property on $\gt(\eta)$.

We denote by $J_{v}^{u}$ the unstable Jacobi field along $\gamma_{v}$
with $J_{v}^{u}(0)=1$. Let $U_{v}^{u}:=(J_{v}^{u})'/J_{v}^{u}$,
then $U_{v}^{u}$ is a solution to the Riccati equation 
\[
U'+U^{2}+K(f_{t}v)=0.
\]
Notice that we also have $U_{v}^{u}(t)=k^{u}(f_{t}v)$. Notice the
following lemma relates $\vp^{u}(t)$ and $-U_{v}^{u}(t)$.

\begin{lem}
\cite[Lemma 7.6]{BCFT2017} There exists a constant $K$ such that
for all $v\in T^{1}S$ and $t>0$ we have 
\[
\left|\int_{0}^{t}\vp^{u}(f_{\tau}v)d\tau-\int_{0}^{t}-U_{v}^{u}(\tau)d\tau\right|\leq K.
\]

\end{lem}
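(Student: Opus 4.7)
The plan is to compute $\log\det(df_t|_{E^u(v)})$ explicitly using the Jacobi field identification, observe that its cocycle derivative equals $-\varphi^u$, and check that the discrepancy between $\log\det(df_t|_{E^u(v)})$ and $\int_0^t U_v^u(\tau)\,d\tau$ is a boundary term governed by $k^u$, which is uniformly bounded.

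First, I would recall that on a surface, $E^u(v)$ is one-dimensional, so the determinant reduces to the expansion factor in the Sasaki metric. Using the identification $\xi\mapsto J_\xi$ together with \eqref{eq:metric_jacobi_unit_tangent_bundle}, for any nonzero $\xi\in E^u(v)$ we have $\|df_t\xi\|^2=\|J_\xi(t)\|^2+\|J_\xi'(t)\|^2$. Taking $\xi$ so that $J_\xi=J_v^u$, Proposition \ref{prop:k^u_and_Coddazi_eq} gives $J_v^u{}'(t)=U_v^u(t)J_v^u(t)$, hence
\[
\det(df_t|_{E^u(v)})=\frac{\|df_t\xi\|}{\|\xi\|}=J_v^u(t)\sqrt{\frac{1+U_v^u(t)^2}{1+U_v^u(0)^2}}.
\]
Since $(\log J_v^u)'=U_v^u$ and $J_v^u(0)=1$, taking logarithms yields
\[
\log\det(df_t|_{E^u(v)})=\int_0^t U_v^u(\tau)\,d\tau+\tfrac12\log(1+U_v^u(t)^2)-\tfrac12\log(1+U_v^u(0)^2).
\]

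Next, I would use the cocycle property $\det(df_{t+s}|_{E^u(v)})=\det(df_t|_{E^u(f_s v)})\cdot\det(df_s|_{E^u(v)})$ and the definition of $\varphi^u$ to obtain
\[
\int_0^t\varphi^u(f_\tau v)\,d\tau=-\log\det(df_t|_{E^u(v)}).
\]
Substituting the expression above gives
\[
\int_0^t\varphi^u(f_\tau v)\,d\tau-\int_0^t\bigl(-U_v^u(\tau)\bigr)\,d\tau=-\tfrac12\log(1+U_v^u(t)^2)+\tfrac12\log(1+U_v^u(0)^2).
\]

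Finally, since $U_v^u(\tau)=k^u(f_\tau v)$ and, by Proposition \ref{prop:horoshpere}, the shape operator $\mathcal{U}^u$ is positive semidefinite with eigenvalues uniformly bounded above by $\Lambda=\max_{v\in T^1S}k^u(v)$, both terms on the right lie in $[0,\tfrac12\log(1+\Lambda^2)]$. Thus the absolute value of the difference is bounded by $K:=\log(1+\Lambda^2)$, uniformly in $v$ and $t$, completing the proof. The argument is essentially a bookkeeping calculation; the only mild subtlety is identifying the correct ``boundary'' correction arising from the Sasaki metric versus the pure Jacobi-field norm, and then invoking the uniform bound on $k^u$ to absorb it.
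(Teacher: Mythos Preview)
Your argument is correct. The paper does not supply its own proof of this lemma; it simply cites \cite[Lemma 7.6]{BCFT2017}, and your computation is exactly the standard one behind that citation: use the Sasaki identification $\|df_t\xi\|^2=\|J_\xi(t)\|^2+\|J_\xi'(t)\|^2$ together with $(J_v^u)'=U_v^u J_v^u$ to write $\log\det(df_t|_{E^u(v)})=\int_0^t U_v^u+\text{(boundary term)}$, then invoke the cocycle identity $\int_0^t\varphi^u(f_\tau v)\,d\tau=-\log\det(df_t|_{E^u(v)})$ and bound the boundary term via $|U_v^u|=k^u\le\Lambda$.

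Two small cosmetic remarks. First, your displayed expression $J_v^u(t)\sqrt{\cdots}$ tacitly treats $J_v^u$ as a positive scalar; strictly speaking one should write $\|J_v^u(t)\|$, but in the surface case this is harmless after choosing a parallel frame along $\gamma_v$. Second, since each of the two boundary terms lies in $[0,\tfrac12\log(1+\Lambda^2)]$, their difference already has absolute value at most $\tfrac12\log(1+\Lambda^2)$; your choice $K=\log(1+\Lambda^2)$ is of course still valid, just not sharp. Neither point affects the validity of the proof.
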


Hence, in order to prove the Bowen property of $\varphi^{u}$ on $\mathcal{G}_{T}(\eta)$,
we only have to prove Lemma \ref{prop:77} below which follows from
Lemma \ref{lem:710711}. 
\begin{lem}
\label{prop:77} For every $T,\eta>0$, there are $\delta,Q,\xi>0$
such that given any $(v,t)\in\mathcal{G}_{T}(\eta),w_{1}\in W_{\delta}^{s}(v)$
and $w_{2}\in f_{-t}W_{\delta}^{u}(f_{t}v)$, for every $0\leq\tau\leq t$
we have 
\[
|U_{v}^{u}(\tau)-U_{w_{1}}^{u}(\tau)|\leq Qe^{-\xi\tau},
\]
\[
|U_{v}^{u}(\tau)-U_{w_{2}}^{u}(\tau)|\leq Q(e^{-\xi\tau}+e^{-\xi(t-\tau)}).
\]

\end{lem}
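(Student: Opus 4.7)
My plan is to reduce the statement to an estimate for the linear ODE satisfied by $f := U^u_v - U^u_w$ (where $w \in \{w_1, w_2\}$), derived from the Riccati equation $(U^u_v)' + (U^u_v)^2 + K(f_\tau v) = 0$. Subtracting the two Riccati equations and factoring $(U^u_v)^2 - (U^u_w)^2 = (U^u_v + U^u_w)(U^u_v - U^u_w)$ yields
\[
f'(\tau) + (U^u_v + U^u_w)(\tau)\, f(\tau) = K(f_\tau w) - K(f_\tau v),
\]
so by variation of parameters, with integrating factor $A(\tau) := \int_0^\tau (U^u_v + U^u_w)(\theta)\, d\theta$,
\[
f(\tau) = f(0)\, e^{-A(\tau)} + \int_0^\tau e^{-(A(\tau) - A(s))} [K(f_s w) - K(f_s v)]\, ds.
\]

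Next I would bound each piece. The smoothness of $K$ on $T^1S$ (hence its Lipschitz continuity) combined with Lemma \ref{lem:312} yields exponentially decaying forcing: $|K(f_s v) - K(f_s w_1)| \leq C e^{-\xi_0 s}$ in the stable case and $|K(f_s v) - K(f_s w_2)| \leq C e^{-\xi_0(t - s)}$ in the unstable case, with $\xi_0 = \eta/(4T)$. In the unstable case, pulling the estimate $d^u \leq \delta$ at time $t$ back to $\tau = 0$ produces the exponentially small initial datum $|f(0)| \leq C' e^{-\xi_0 t}$, whereas in the stable case $|f(0)|$ is merely uniformly bounded by continuity of $k^u$; this looser bound on $|f(0)|$ will be absorbed by the decay of $e^{-A(\tau)}$. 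For the integrating factor itself, using $k^u \geq \lambda$ together with Lemma \ref{alglem} and the forward component of the hypothesis $(v, t) \in \mathcal{G}_T(\eta)$, one obtains a linear lower bound $A(\tau) \geq c\tau - C_0$ with $c > 0$ depending on $\eta$ and $T$.

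The main obstacle is controlling the convolution term, because the good-orbit-segment hypothesis bounds $\lambda_T$-integrals only from the endpoints of $[0, t]$, while the intermediate difference $A(\tau) - A(s)$ is not a priori linear in $\tau - s$. To handle this I plan to split the convolution at the midpoint $\tau/2$: on $[\tau/2, \tau]$ the forcing is already of size $e^{-\xi_0 \tau/2}$, so the contribution is absorbed directly; on $[0, \tau/2]$ one uses the monotonicity $A(\tau) - A(s) \geq A(\tau) - A(\tau/2)$ and extracts a linear lower bound on $A(\tau) - A(\tau/2)$ by combining the forward and backward $\mathcal{G}_T(\eta)$ inequalities (possibly after a case split by the location of $\tau$ within $[0, t]$). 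The unstable bound then decomposes into two summands: one of order $e^{-\xi\tau}$ coming from the exponentially small initial datum propagated forward by the ODE, and one of order $e^{-\xi(t - \tau)}$ coming from the forcing being small only near $s = t$. The announced reduction to Lemma \ref{lem:710711} presumably packages these estimates into an abstract Riccati-type asymptotic statement, which I would prove by this same variation-of-parameters scheme.
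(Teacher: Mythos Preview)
Your overall scheme is correct and is essentially the mechanism behind the paper's Lemma~\ref{lem:710711}: subtract the Riccati equations, regard $f=U_v^u-U_w^u$ as solving the linear first-order ODE $f'+(U_v^u+U_w^u)f=K(f_\cdot w)-K(f_\cdot v)$, and integrate. The paper does not work with $f$ directly, however. It introduces an auxiliary Riccati solution $U_1$ along $\gamma_v$ with $U_1(0)=U_w^u(0)$ and writes $|U_v^u-U_w^u|\le|U_v^u-U_1|+|U_1-U_w^u|$. The point of this is that in the first summand both functions solve the \emph{same} Riccati equation (along $\gamma_v$), so the Riccati comparison principle gives $U_1\ge U_v^u$ everywhere and hence the one-sided inequality $(U_1-U_v^u)'\le-2k^u(f_\tau v)(U_1-U_v^u)$; the resulting integrating factor $\int_0^\tau 2k^u(f_s v)\,ds$ is anchored at $0$ and is bounded below linearly via the forward $\mathcal{G}_T(\eta)$ condition together with Lemma~\ref{alglem}. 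In the second summand the initial data agree, so a sign-split at the last crossing time $s_0<\tau$ kills the homogeneous part and yields a pure Gronwall bound with kernel $\exp\big(-\int_s^\tau 2k^u(f_a w)\,da\big)$, which the paper then replaces by $\exp(-\eta(\tau-s)/(2T))$ invoking $w\in\mathcal{G}_T(\eta/2)$.

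Your proposed midpoint split for the convolution does not work as written. On $[0,\tau/2]$ you need $A(\tau)-A(\tau/2)=\int_{\tau/2}^{\tau}\big(k^u(f_\theta v)+k^u(f_\theta w)\big)\,d\theta$ to grow linearly in $\tau$, but the $\mathcal{G}_T(\eta)$ hypothesis only controls $\lambda_T$-integrals starting at $0$ or ending at $t$, never over an interior window $[\tau/2,\tau]$. No combination of the forward and backward inequalities can manufacture such a bound: on $[0,3]$ with $\eta=1$, the function $g=2\cdot\mathbf{1}_{[0,1]\cup[2,3]}$ satisfies both $\int_0^\sigma g\ge\sigma$ and $\int_\sigma^3 g\ge 3-\sigma$ for every $\sigma\in[0,3]$, yet $\int_1^2 g=0$. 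So the ``case split by the location of $\tau$ within $[0,t]$'' cannot rescue the estimate. The paper's $U_1$-trick and sign-split are precisely what decouple the homogeneous decay (governed by the benign $\int_0^\tau$ integral, controlled from the left endpoint) from the forcing contribution, and then package the remaining interior-interval control into the convolution kernel of Lemma~\ref{lem:710711}.
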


\begin{lem}
\label{lem:710711} For every $T,\eta>0$, there are $\delta,Q$ such
that given any $(v,t)\in\mathcal{G}_{T}(\eta),w\in B_{\tau}(v,\delta)$,
for every $0\leq\tau\leq t$ we have 
\[
|U_{v}^{u}(\tau)-U_{w}^{u}(\tau)|\leq Q\exp\left(-\frac{\eta\tau}{T}\right)+Q\int_{0}^{\tau}\exp\left(-\frac{\eta(\tau-s)}{2T}\right)|K(f_{s}v)-K(f_{s}w)|ds.
\]
\end{lem}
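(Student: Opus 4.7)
The strategy is to reduce to a scalar linear ODE for the difference. Since $U_v^u(s) = k^u(f_s v)$ satisfies the Riccati equation $(U_v^u)' + (U_v^u)^2 + K(f_s v) = 0$, and likewise for $w$, setting $D(s) := U_v^u(s) - U_w^u(s)$ and $P(s) := U_v^u(s) + U_w^u(s) \geq 0$ (positivity being the horosphere-convexity statement of Proposition \ref{prop:horoshpere}) yields the linear first-order ODE $D' + PD = -(K(f_s v) - K(f_s w))$. Multiplying by the integrating factor $\exp(\int_0^s P\,du)$, integrating, and taking absolute values gives
\[
|D(\tau)| \leq |D(0)|\, e^{-\int_0^\tau P(u)\, du} + \int_0^\tau e^{-\int_s^\tau P(u)\, du}\, |K(f_s v) - K(f_s w)|\, ds.
\]
The initial term $|D(0)| = |k^u(v) - k^u(w)|$ is controlled by uniform continuity of $k^u$ together with $d_K(v,w) < \delta$, so it can be absorbed into the final constant $Q$. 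The remainder of the argument is to produce good lower bounds for $\int_0^\tau P$ and for $\int_s^\tau P$ in terms of $\eta$, $T$, and $\tau - s$.

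Since $\lambda = \min(k^u, k^s) \leq k^u$ and $w \in B_\tau(v,\delta)$, uniform continuity of $\lambda$ yields a pointwise estimate $P(u) \geq 2\lambda(f_u v) - \varepsilon(\delta)$, with $\varepsilon(\delta) \to 0$ as $\delta \to 0$. Hence, by choosing $\delta$ sufficiently small, the $\varepsilon(\delta)$-error is absorbable into multiplicative constants, and it suffices to lower bound $\int \lambda(f_u v)\, du$ on the relevant intervals. For the first summand, the hypothesis $(v,t)\in \mathcal{G}_T(\eta)$ gives $\int_0^\tau \lambda_T(f_u v)\, du \geq \eta \tau$, and the last assertion of Lemma \ref{alglem} converts this to
\[
\int_0^\tau \lambda(f_u v)\, du \geq \frac{1}{2T}\int_0^\tau \lambda_T(f_u v)\, du - 2T\Lambda \geq \frac{\eta \tau}{2T} - 2T\Lambda.
\]
Using $P \geq 2\lambda(f_u v) - \varepsilon$ doubles this estimate, and exponentiating yields $e^{-\int_0^\tau P} \leq e^{4T\Lambda}\, e^{-\eta\tau/T}$, which folds into the first summand $Q\, e^{-\eta \tau/T}$ after enlarging $Q$.

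For the second summand I would repeat the same scheme on $\int_s^\tau P$. The critical feature of Lemma \ref{alglem} is that the additive defect $2T\Lambda$ is independent of the length of the interval, so once I have a lower bound $\int_s^\tau \lambda_T(f_u v)\, du \gtrsim \eta(\tau - s)$, the $\lambda_T \to \lambda$ conversion yields $\int_s^\tau \lambda(f_u v)\, du \geq \eta(\tau-s)/(4T) - C$, and doubling via $P \geq 2\lambda - \varepsilon$ gives the needed $e^{-\eta(\tau-s)/(2T)}$ weight after exponentiation. The main obstacle is precisely obtaining the lower bound on $\int_s^\tau \lambda_T$: the good-set condition $\mathcal{G}_T(\eta)$ is an endpoint condition, not a uniform sub-interval one, so the naive splitting $\int_s^\tau \lambda_T = \int_0^\tau \lambda_T - \int_0^s \lambda_T$ is too crude. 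The resolution is to combine the forward condition with the backward condition $\int_0^{\tau'}\lambda_T(f_{-\theta}f_t v)\,d\theta \geq \eta \tau'$ together with the trivial upper bound $\lambda \leq \Lambda$: this gives $\int_s^\tau \lambda_T \geq \eta(\tau - s) - O(T\Lambda)$, with the $O(T\Lambda)$ defect again independent of $\tau - s$ and hence absorbable into $Q$ upon exponentiation. Plugging the two exponent estimates into the variation-of-parameters formula produces the stated bound.
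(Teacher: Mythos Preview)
Your variation-of-parameters formula for $D=U_v^u-U_w^u$ is correct, but two of your exponent estimates fail, and the paper's proof takes a genuinely different route precisely to avoid these issues.

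\emph{First summand.} From $P(u)\geq 2\lambda(f_uv)-\varepsilon(\delta)$ you get $\int_0^\tau P\geq \eta\tau/T - 4T\Lambda - \varepsilon\tau$, hence only $e^{-\int_0^\tau P}\leq e^{4T\Lambda}e^{-(\eta/T-\varepsilon)\tau}$. Since $\tau$ ranges over an unbounded set, the factor $e^{\varepsilon\tau}$ is \emph{not} absorbable into a fixed $Q$, so you do not obtain the stated rate $e^{-\eta\tau/T}$. The paper circumvents this by introducing an auxiliary solution $U_1$ of the Riccati equation \emph{along $\gamma_v$} with $U_1(0)=U_w^u(0)$; since $U_1$ and $U_v^u$ satisfy the same equation with $U_1(0)\geq U_v^u(0)$, the comparison principle gives $U_1\geq U_v^u$ everywhere, whence $U_1+U_v^u\geq 2k^u(f_\cdot v)$ \emph{exactly}, with no cross-orbit $\varepsilon$. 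This is what buys the clean exponent $\eta\tau/T$.

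\emph{Second summand.} Your claimed bound $\int_s^\tau\lambda_T(f_av)\,da\geq \eta(\tau-s)-O(T\Lambda)$, with defect independent of $s,\tau,t$, does not follow from the definition of $\mathcal{G}_T(\eta)$ and is false in general. The forward and backward conditions control only integrals anchored at the endpoints $0$ and $t$; an interior sub-interval can carry arbitrarily small $\lambda_T$-mass if the endpoints compensate. For example, if $\lambda_T(f_av)$ equals $2\eta$ on $[0,s]\cup[2s,3s]$ and $0$ on $[s,2s]$, one checks that $(v,3s)\in\mathcal{G}_T(\eta)$, yet $\int_s^{2s}\lambda_T=0$ while $\eta(2s-s)=\eta s$ is unbounded in $s$. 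Algebraically, combining forward and backward only gives $\int_s^\tau\lambda_T\geq \eta(\tau+t-s)-\int_0^t\lambda_T$, and the subtracted term is of order $t$, not $O(T\Lambda)$. The paper's $U_1$-split helps here too: since $U_1$ and $U_w^u$ agree at the last crossing time $s_0$, the boundary term vanishes and on $[s_0,\tau]$ one has $U_1+U_w^u\geq 2k^u(f_\cdot w)$; the weight is then controlled along the $w$-orbit by invoking $(w,\cdot)\in\mathcal{G}_T(\eta/2)$ and its backward condition, rather than by trying to bound an interior integral along $v$.
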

\begin{proof}[Proof of Lemma \ref{prop:77}]
We may choose small $\delta$ so that $w_{1},w_{2}\in\mathcal{G}_{T}(\eta/2)$.
We will use $Q$ to denote a uniform constant that is updated as necessary
when the context is clear.

Since $w_{1}\in W_{\delta}^{s}(v)$, the smoothness of $K$ together
with Lemma \ref{lem:312} implies 
\[
|K(f_{\tau}v)-K(f_{\tau}w_{1})|\leq Qd_{K}(f_{\tau}v,f_{\tau}w_{1})\leq Qd^{s}(f_{\tau}v,f_{\tau}w_{1})\leq Q\exp\left(-\frac{\eta\tau}{4T}\right)
\]
for any $\tau\in[0,t]$. Thus by Lemma \ref{lem:710711}, there exists
$Q>0$, 

\begin{alignat*}{1}
|U_{v}^{u}(\tau)-U_{w_{1}}^{u}(\tau)| & \leq Q\exp\left(-\frac{\eta\tau}{T}\right)+Q\int_{0}^{\tau}\exp(-\frac{\eta(\tau-s)}{2T})\exp(-\frac{\eta s}{4T})ds\\
 & \leq Q\exp\left(-\frac{\eta\tau}{T}\right)+Q\exp(-\frac{\eta\tau}{4T})\int_{0}^{\tau}\exp(-\frac{\eta(\tau-s)}{4T})ds\\
 & \leq Q\exp\left(-\frac{\eta\tau}{T}\right)+Q\tau\exp\left(-\frac{\eta\tau}{4T}\right)\\
 & \leq Qe^{-\xi\tau},
\end{alignat*}
once we fix $\xi<\eta/4T$. Hence $|U_{v}^{u}(\tau)-U_{w_{1}}^{u}(\tau)|\leq Qe^{-\xi\tau}$.

For $w_{2}\in f_{-t}W_{\delta}^{u}(f_{t}v)$, we have the following
estimation for $K$: 
\[
|K(f_{\tau}v)-K(f_{\tau}w_{2})|\leq Qd_{K}(f_{\tau}v,f_{\tau}w_{2})\leq Qd^{u}(f_{\tau-t}f_{t}v,f_{\tau-t}f_{t}w_{2})\leq Q\exp\left(-\frac{\eta(t-\tau)}{4T}\right)
\]
for any $\tau\in[0,t]$. We use Lemma \ref{lem:710711} again and
get: 
\begin{eqnarray*}
|U_{v}^{u}(\tau)-U_{w_{2}}^{u}(\tau)| & \leq & Q\exp\left(-\frac{\eta\tau}{T}\right)+Q\int_{0}^{\tau}\exp(-\frac{\eta(\tau-s)}{2T})\exp\left(-\frac{\eta(t-s)}{4T}\right)ds,\\
 & \leq & Q\exp\left(-\frac{\eta\tau}{T}\right)+Q\exp\left(-\frac{\eta(t-\tau)}{4T}\right)\int_{0}^{\tau}\exp\left(-\frac{3\eta(\tau-s)}{4T}\right)ds\\
 & \leq & Q\exp\left(-\frac{\eta\tau}{T}\right)+Q\exp\left(-\frac{\eta(t-\tau)}{4T}\right).
\end{eqnarray*}
This completes the proof.
\end{proof}

\begin{proof}[Proof of Lemma \ref{lem:710711}]
Without loss of generality, we may assume $U_{w}^{u}(0)\geq U_{v}^{u}(0)$
and let $U_{1}$ be the solution of the Riccati equation along $\gamma_{v}$
with $U_{1}(0)=U_{w}^{u}(0)$. We have 
\[
|U_{v}^{u}(\tau)-U_{w}^{u}(\tau)|\leq|U_{v}^{u}(\tau)-U_{1}(\tau)|+|U_{1}(\tau)-U_{w}^{u}(\tau)|.
\]
Since $U_{w}^{u}(0)\geq U_{v}^{u}(0)$ and both $U_{1}$ and $U_{v}^{u}$
are Riccati solutions along $\gamma_{v}$, we have $U_{1}(\tau)\geq U_{v}^{u}(\tau)=k^{u}(f_{\tau}v)$
for all $\tau$. Hence 
\[
(U_{1}-U_{v}^{u})'=-(U_{1}-U_{v}^{u})(U_{1}+U_{v}^{u})\leq-2k^{u}(f_{\tau}v)(U_{1}-U_{v}^{u})\leq-2\lambda(f_{\tau}v)(U_{1}-U_{v}^{u}).
\]
Thus $(U_{1}(\tau)-U_{v}^{u}(\tau))\exp\big(\int_{0}^{\tau}2\lambda(f_{s}v)ds\big)$
is not increasing. By Lemma \ref{lem:312} we have 
\begin{eqnarray*}
0 & \leq & U_{1}(\tau)-U_{v}^{u}(\tau)\leq(U_{w}^{u}(0)-U_{v}^{u}(0))\exp\left(-\int_{0}^{\tau}2\lambda(f_{s}v)ds\right)\\
 & \leq & Q\exp\left(-\frac{1}{T}\int_{0}^{\tau}\lt(f_{s}v)ds\right)\leq Q\exp\left(-\frac{\eta\tau}{T}\right).
\end{eqnarray*}
Now we estimate $|U_{1}(\tau)-U_{w}^{u}(\tau)|$. We may assume $U_{1}(\tau)>U_{w}^{u}(\tau)$
(the other case is similar). Suppose $U_{1}(s_{0})=U_{w}^{u}(s_{0})$
at $s_{0}<\tau$ and $U_{1}(s)>U_{w}^{u}(s)$ for any $s\in(s_{0},t)$.
By taking difference of the corresponding Riccati equations, for any
$s\in(s_{0},t)$, we have: 
\begin{eqnarray*}
(U_{1}-U_{w}^{u})'(s) & = & -(U_{1}(s)-U_{w}^{u}(s))(U_{1}(s)+U_{w}^{u}(s))+K(f_{s}v)-K(f_{s}w)\\
 & \leq & -2k^{u}(f_{s}w)(U_{1}-U_{w}^{u})(s)+|K(f_{s}v)-K(f_{s}w)|.
\end{eqnarray*}
Thus 
\begin{eqnarray*}
 &  & \frac{d}{ds}\left((U_{1}(s)-U_{v}^{u}(s))\exp\left(\int_{s_{0}}^{s}2k^{u}(f_{a}w)da\right)\right),\\
 & = & \exp\left(\int_{s_{0}}^{s}2k^{u}(f_{a}w)da\right)((U_{1}-U_{w}^{u})'(s)+2k^{u}(f_{s}w)(U_{1}-U_{w}^{u})(s)),\\
 & \leq & \exp\left(\int_{s_{0}}^{s}2k^{u}(f_{a}w)da\right)|K(f_{s}v)-K(f_{s}w)|.
\end{eqnarray*}
Together with Lemma \ref{lem:2.9}, we have 
\begin{eqnarray*}
U_{1}(\tau)-U_{v}^{u}(\tau) & \leq & \exp\left(-\int_{s_{0}}^{\tau}2k^{u}(f_{a}w)da\right)\int_{s_{0}}^{\tau}\exp\left(\int_{s_{0}}^{s}2k^{u}(f_{a}w)da\right)|K(f_{s}v)-K(f_{s}w)|ds\\
 & = & \int_{s_{0}}^{\tau}\exp\left(-\int_{s}^{\tau}2k^{u}(f_{a}w)da\right)|K(f_{s}v)-K(f_{s}w)|ds\\
 & \leq & \int_{s_{0}}^{\tau}\exp\left(-\int_{s}^{\tau}2\lambda(f_{a}w)da\right)|K(f_{s}v)-K(f_{s}w)|ds\\
 & \leq Q & \int_{s_{0}}^{\tau}\exp\left(-\frac{1}{T}\int_{s}^{\tau}\lt(f_{a}w)da\right)|K(f_{s}v)-K(f_{s}w)|ds\\
 & \leq Q & \int_{0}^{\tau}\exp\left(-\frac{\eta(\tau-s)}{2T}\right)|K(f_{s}v)-K(f_{s}w)|ds,
\end{eqnarray*}
where the last inequality follows because $w\in\gt(\eta/2)$. 
\end{proof}
Putting together Lemma \ref{lem:Bowen_stable_unstable_all} and Lemma
\ref{prop:77}, we have the following result:
\begin{thm}
\label{thm:geometric-potential-Bowen}The geometric potential $\varphi^{u}$
has the Bowen property with respect to $\gt(\eta)$ for any $T,\eta>0$. 
\end{thm}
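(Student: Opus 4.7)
The plan is to reduce the statement to the Bowen property along stable and unstable leaves and then invoke Lemma \ref{lem:Bowen_stable_unstable_all} to promote this to the full Bowen property on $\gt(\eta)$. The key pointwise inputs will be the Riccati estimates of Lemma \ref{prop:77}, which I combine with the comparison $\bigl|\int_0^t \varphi^u(f_\tau v)\,d\tau - \int_0^t (-U^u_v(\tau))\,d\tau\bigr| \leq K$ provided by \cite[Lemma 7.6]{BCFT2017} (stated immediately before Lemma \ref{prop:77}). Since Lemma \ref{lem:Bowen_stable_unstable_all} demands the leafwise estimates on $\gt(\eta/2)$, I fix $\delta,Q,\xi>0$ as in Lemma \ref{prop:77} applied with $\eta/2$ in place of $\eta$, and take any $(v,t)\in\gt(\eta/2)$.

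First, for $w_1 \in W^s_\delta(v)$, applying the $\varphi^u$-versus-$U^u$ comparison at both $v$ and $w_1$, followed by the first inequality of Lemma \ref{prop:77}, gives
\[
\left|\int_0^t \varphi^u(f_\tau v)\,d\tau - \int_0^t \varphi^u(f_\tau w_1)\,d\tau\right| \;\leq\; 2K + \int_0^t Q e^{-\xi\tau}\,d\tau \;\leq\; 2K + Q/\xi,
\]
a bound independent of $t$. For $w_2 \in f_{-t}W^u_\delta(f_t v)$, the analogous argument using the second inequality of Lemma \ref{prop:77} yields
\[
\left|\int_0^t \varphi^u(f_\tau v)\,d\tau - \int_0^t \varphi^u(f_\tau w_2)\,d\tau\right| \;\leq\; 2K + \int_0^t Q\bigl(e^{-\xi\tau}+e^{-\xi(t-\tau)}\bigr)\,d\tau \;\leq\; 2K + 2Q/\xi,
\]
again uniform in $t$. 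Together, these two uniform bounds say precisely that $\varphi^u$ enjoys the Bowen property along stable and along unstable leaves with respect to $\gt(\eta/2)$.

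Finally, Lemma \ref{lem:Bowen_stable_unstable_all} applied with the given $\eta$ then combines the stable and unstable leafwise Bowen bounds into the full Bowen property on $\gt(\eta)$, which is the conclusion. The main technical content is already absorbed into Lemma \ref{prop:77}, which in turn rests on the Riccati comparison of Lemma \ref{lem:710711}; there, the accumulated hyperbolicity $\lambda_T \geq \eta$ along orbits in $\gt(\eta)$ is what forces $(U_1 - U^u_v)\exp\bigl(\int_0^\tau 2\lambda(f_s v)\,ds\bigr)$ to be non-increasing and makes the exponentially decaying kernel $\exp(-\eta(\tau-s)/2T)$ appear. The hard part is therefore not this final synthesis but the Riccati argument already carried out in Lemma \ref{lem:710711}; once those pointwise bounds are in place, the proof of Theorem \ref{thm:geometric-potential-Bowen} is essentially a bookkeeping step.
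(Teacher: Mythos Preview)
Your proof is correct and follows exactly the paper's own approach: the paper's one-line proof simply says ``Putting together Lemma \ref{lem:Bowen_stable_unstable_all} and Lemma \ref{prop:77}'', and you have spelled out precisely that synthesis, including the correct $\eta/2$ bookkeeping needed to feed into Lemma \ref{lem:Bowen_stable_unstable_all}. The only content you have added is the explicit integration of the pointwise Riccati bounds from Lemma \ref{prop:77} (together with the $\varphi^u$-vs-$U^u$ comparison) to obtain the uniform constants $2K+Q/\xi$ and $2K+2Q/\xi$, which is exactly what the paper leaves implicit.
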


\section{Pressure gap and the proof of Theorem A\label{sec:Pressure-gap-and_Thm_A}}

The aim of this section is to prove Theorem A. In order to do that,
we spend most part of this section on related estimates on pressures,
such as $P(\cdot)$, $P(\Sing,\cdot)$, $P_{\exp}^{\perp}(\cdot),$
and relations between them.

We know when the collection ${\cal C}=X\times[0,\infty)$ we can use
the variational principle to understand the topological pressure $P(\cdot)$.
However, when the collection ${\cal C}$ is not the set of all finite
orbits, the variational principle does not hold any more. Nevertheless,
one can still use empirical measures along orbits segments in ${\cal C}$
to ``understand'' $P({\cal C},\cdot)$. To be more precise, we start
from recalling related terms and estimates given in \cite{BCFT2017}. 

Let $X$ be a compact metric space, ${\cal F}$ be a continuous flow,
and $\vp:X\to\R$ be a continuous potential. Given a collection of
finite orbit segments ${\cal C}\subset X\times[0,\infty)$, for $(x,t)\in{\cal C}$
the \textit{empirical measure} $\delta_{(x,t)}$ is defined as, for
any $\psi\in C(X)$, 
\[
\int\psi d{\cal \delta}_{(x,t)}=\frac{1}{t}\int_{0}^{t}\psi(f_{\tau}x)d\tau.
\]

We further write ${\cal M}_{t}({\cal C})$ for the convex linear combinations
of empirical measures of length $t$, that is, 
\[
{\cal M}_{t}({\cal C}):=\{\sum_{i=1}^{k}a_{i}\delta_{(x_{i},t)}:\ a_{i}\geq0,\ \sum a_{i}=1,\ (x_{i},t)\in{\cal C}\}.
\]
 Finally, let ${\cal M}({\cal {\cal C}})$ denote the set of ${\cal F}-$invariant
Borel probability measures which are limits of measures in ${\cal M_{t}}$,
i.e., 
\[
{\cal M}({\cal C}):=\{\lim_{k\to\infty}\mu_{t_{k}}:\ t_{k}\to\infty,\ \mu_{t_{k}}\in{\cal M}_{t_{k}}({\cal C})\}.
\]

Notice that when ${\cal C}$ contains arbitrary long orbit segments,
${\cal M}({\cal C})$ is a nonempty set.

We recall a useful general result from \cite{BCFT2017}:

\begin{prop}
\cite[Proposition 5.1]{BCFT2017} Suppose $\vp$ is a continuous function,
then 
\[
P({\cal C},\vp)\leq\sup_{\mu\in{\cal M}({\cal C})}P_{\mu}(\vp)
\]
 where $P_{\mu}(\vp):=h_{\mu}+\int\vp d\mu$.
\end{prop}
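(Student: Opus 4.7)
The plan is to mimic the upper-bound half of the classical variational principle (Misiurewicz's argument as in Walters \cite{Walters:2000vc}), restricted to the collection $\mathcal{C}$. The point is that the separated sets used to compute $P(\mathcal{C},\vp)$ can themselves be packaged into invariant measures lying in $\mathcal{M}(\mathcal{C})$.

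First I would fix $\delta>0$ and, for each $t>0$, choose a $(t,\delta)$-separated set $E_{t}\subset\mathcal{C}_{t}$ with $\sum_{x\in E_{t}}e^{\Phi(x,t)}\geq\tfrac{1}{2}\Lambda(\mathcal{C},\vp,\delta,t)$. Set $\Lambda_{t}:=\sum_{x\in E_{t}}e^{\Phi(x,t)}$, $p_{x}:=e^{\Phi(x,t)}/\Lambda_{t}$, and form the weighted empirical average
\[
\mu_{t}\;:=\;\sum_{x\in E_{t}}p_{x}\,\delta_{(x,t)}\;\in\;\mathcal{M}_{t}(\mathcal{C}).
\]
Choose a sequence $t_{k}\to\infty$ realizing the $\limsup$ defining $P(\mathcal{C},\vp)$ at scale $\delta$, pass to a subsequence (not relabeled) so that $\mu_{t_{k}}\to\mu$ in the weak-$*$ topology, and note that the $\tau$-averaging forces $\mu$ to be $\mathcal{F}$-invariant; hence $\mu\in\mathcal{M}(\mathcal{C})$.

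Next comes the core entropy estimate. Pick a finite Borel partition $\xi$ with $\mu(\partial\xi)=0$ and diameter strictly less than $\delta/L$, where $L$ is a uniform modulus of continuity controlling how far points can move under $f_{\tau}$ for $\tau\in[0,1]$. For $n=\lfloor t\rfloor$, each atom of $\xi_{0}^{n-1}:=\bigvee_{i=0}^{n-1}f_{-i}\xi$ is small enough in the continuous-time $d_{t}$-metric that it contains at most one element of $E_{t}$. The standard identity for the entropy of a weighted point mass then gives
\[
H_{\nu_{t}}(\xi_{0}^{n-1})\;=\;-\sum_{x\in E_{t}}p_{x}\log p_{x}\;=\;\log\Lambda_{t}\,-\,\sum_{x\in E_{t}}p_{x}\Phi(x,t)\;=\;\log\Lambda_{t}\,-\,t\!\int\vp\,d\mu_{t},
\]
where $\nu_{t}=\sum p_{x}\delta_{x}$. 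Concavity of $-x\log x$ (Walters \cite{Walters:2000vc}, Theorem 8.6) transfers this lower bound from $\nu_{t}$ to the averaged measure $\mu_{t}$ at the cost of an additive $O(\log n)$ error. Dividing by $t$, letting $t=t_{k}\to\infty$ with $n=\lfloor t_{k}\rfloor$, using $h_{\mu}(\mathcal{F})=h_{\mu}(f_{1})\geq h_{\mu}(f_{1},\xi)$ together with upper semi-continuity of the entropy map (valid in our geodesic-flow setting thanks to entropy-expansivity \cite{Liu:2016de}), and finally letting $\operatorname{diam}(\xi)\to 0$ yields $h_{\mu}(\mathcal{F})+\int\vp\,d\mu\geq \limsup_{k}\frac{1}{t_{k}}\log\Lambda(\mathcal{C},\vp,\delta,t_{k})$. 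Sending $\delta\to 0$ along a diagonal subsequence produces the desired measure in $\mathcal{M}(\mathcal{C})$, completing the inequality.

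The main obstacle is the very last step: the measure $\mu$ produced above depends a priori on the fixed scale $\delta$, but the pressure requires $\delta\to 0$. The remedy is a standard diagonalization—choose a sequence $\delta_{j}\searrow 0$, produce measures $\mu^{(j)}\in\mathcal{M}(\mathcal{C})$ for each scale, and extract a weak-$*$ limit $\mu^{\infty}\in\mathcal{M}(\mathcal{C})$ (the collection $\mathcal{M}(\mathcal{C})$ is itself weak-$*$ closed, being defined by double limits of $\mu_{t}\in\mathcal{M}_{t}(\mathcal{C})$); upper semi-continuity of $\mu\mapsto h_{\mu}(\mathcal{F})+\int\vp\,d\mu$ preserves the inequality in the limit. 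A secondary but routine wrinkle is the reconciliation of continuous-time $(t,\delta)$-separation with discrete-time partition refinements, handled by the modulus-of-continuity trick above.
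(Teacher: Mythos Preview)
The paper does not give its own proof of this proposition; it simply quotes \cite[Proposition 5.1]{BCFT2017}, whose argument is precisely the Misiurewicz half of the variational principle run inside $\mathcal{C}$. Your outline is that argument, so the approach is correct. Two points, however, are unnecessary and the first actually weakens the statement.

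First, upper semi-continuity of the entropy map is not used in the Misiurewicz argument, and invoking it (via entropy-expansivity of the geodesic flow) makes the proposition look setting-specific when it is completely general. The concavity step does not compare $\frac{1}{n}H_{\nu_t}(\xi_0^{n-1})$ directly to $h_\mu$; it compares it to $\frac{1}{q}H_{\mu_t}(\xi_0^{q-1})$ for a \emph{fixed} $q$, up to an error $O(q/n)$. Passing to the weak-$*$ limit $\mu_{t_k}\to\mu$ only requires continuity of $\mu\mapsto H_\mu(\xi_0^{q-1})$ on a fixed finite partition with $\mu(\partial\xi)=0$, which is automatic. You then let $q\to\infty$ afterwards to get $h_\mu(f_1,\xi)$, and bound this above by $h_\mu(f_1)$. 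No $\operatorname{diam}(\xi)\to 0$ step is needed either.

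Second, the closing diagonalization is superfluous. For each fixed $\delta$ you have produced $\mu_\delta\in\mathcal{M}(\mathcal{C})$ with
\[
\limsup_{t\to\infty}\frac{1}{t}\log\Lambda(\mathcal{C},\vp,\delta,t)\;\leq\;P_{\mu_\delta}(\vp)\;\leq\;\sup_{\mu\in\mathcal{M}(\mathcal{C})}P_\mu(\vp).
\]
The right-hand side is already independent of $\delta$, so letting $\delta\to 0$ on the left finishes the proof immediately; there is no need to manufacture a single limiting measure, and hence no second appeal to upper semi-continuity.
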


Let us apply above results to our specific setting: $S$ a closed
surface of genus greater than or equal to 2 without focal points,
${\cal F}$ the geodesic flow for $S$, and $\vp:T^{1}S\to\R$ a continuous
potential.

The following lemma establishes that the pressure of the obstruction
to expansivity is strictly less than the entire pressure. It is a
direct consequence of the flat strip theorem.

\begin{prop}
\cite[Proposition 5.4]{BCFT2017} \label{prop:exp<sing}For a continuous
potential $\varphi$, $P_{{\rm exp}}^{\perp}(\vp)\leq P(\Sing,\vp)$. 
\end{prop}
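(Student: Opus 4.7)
The plan is to establish the inequality by showing that $NE(\epsilon)\subseteq \Sing$ for every sufficiently small $\epsilon>0$. Granted this inclusion, any $\F$-invariant ergodic Borel probability measure $\mu$ with $\mu(NE(\epsilon))=1$ automatically satisfies $\mu(\Sing)=1$. Hence
\[
P_{\exp}^{\perp}(\varphi,\epsilon)=\sup\bigl\{h_\mu(f_1)+\int\varphi\,d\mu:\ \mu\in\mathcal{M}^e(\F),\ \mu(NE(\epsilon))=1\bigr\}\le P(\Sing,\varphi),
\]
since the right-hand supremum ranges over a smaller collection than the one defining $P(\Sing,\varphi)$ via the variational principle restricted to the closed invariant set $\Sing$. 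Letting $\epsilon\to 0$ yields the claim.

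To prove $NE(\epsilon)\subseteq\Sing$, fix $v\in NE(\epsilon)$. By definition, for each $s>0$ there exists $w_s\in\Gamma_\epsilon(v)\setminus f_{[-s,s]}(v)$. Taking $s_n\to\infty$ and invoking compactness of $T^1S$, extract a convergent subsequence $w_{s_n}\to w\in\Gamma_\epsilon(v)$. Because $\Gamma_\epsilon(v)$ is defined by $d_K(f_tv,f_tw)\le\epsilon$ for all $t\in\R$, the projected geodesics $\gamma_v$ and $\gamma_w$ stay within uniformly bounded Riemannian distance for all time. Choosing suitable lifts $\widetilde{\gamma}_v$ and $\widetilde{\gamma}_{w_{s_n}}$ to the universal cover $\widetilde{S}$ that pass close to each other at time $0$, the condition $w_{s_n}\notin f_{[-s_n,s_n]}(v)$ ensures these lifts are pairwise distinct and at uniformly bounded distance. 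Passing to a limit (bounded distance between geodesics is a closed condition in $\widetilde{S}$), we obtain a geodesic $\widetilde{\gamma}\ne\widetilde{\gamma}_v$ bi-asymptotic to $\widetilde{\gamma}_v$. The flat strip theorem (Proposition \ref{prop:no_focal_points}(5)) then produces a flat totally geodesic strip containing $\widetilde{\gamma}_v$, so $K(\pi f_tv)=0$ for all $t\in\R$. By Proposition \ref{prop:no_focal_points}(7), this gives $v\in\Sing$.

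The main obstacle is the degenerate case in which the single limit $w$ happens to lie on the $\F$-orbit of $v$, providing no new geodesic. This is precisely where one must work at the level of the pre-limit sequence $\{w_{s_n}\}$ in the universal cover rather than only with the limit $w$ downstairs: each lift $\widetilde{\gamma}_{w_{s_n}}$ is genuinely distinct from $\widetilde{\gamma}_v$ since $w_{s_n}$ escapes the $2s_n$-orbit window of $v$, and the closedness of the flat-strip condition under uniform convergence on compact sets lets us take a limiting flat strip. With this in hand the rest of the argument is parallel to the proof of \cite[Proposition 5.4]{BCFT2017}, with our Proposition \ref{prop:no_focal_points} substituted for the nonpositively curved analogues used there.
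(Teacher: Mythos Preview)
Your approach via the flat strip theorem is exactly the one the paper invokes (it simply cites the verbatim argument of \cite[Proposition~5.4]{BCFT2017}), so the strategy is correct. The execution, however, is more convoluted than necessary and the patch you propose for the ``degenerate case'' does not quite close the gap as written.

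No sequence or limit is needed. A single $w\in\Gamma_\epsilon(v)\setminus f_{[-s,s]}(v)$ for any fixed $s>\epsilon$ (with $\epsilon$ below the injectivity radius) already does the job: lifting $v,w$ to nearby $\widetilde v,\widetilde w\in T^1\widetilde S$, the geodesics $\gamma_{\widetilde v},\gamma_{\widetilde w}$ stay within $\epsilon$ for all time, hence are bi-asymptotic. If they coincided as unparametrized geodesics then $\widetilde w=f_\tau\widetilde v$, and since geodesics in $\widetilde S$ are globally minimizing one has $|\tau|=d_K(\widetilde v,f_\tau\widetilde v)\le\epsilon$, forcing $w=f_\tau v\in f_{[-\epsilon,\epsilon]}(v)\subset f_{[-s,s]}(v)$, a contradiction. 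Thus $\gamma_{\widetilde v}\ne\gamma_{\widetilde w}$ and the flat strip theorem yields $K\equiv 0$ along $\gamma_{\widetilde v}$, so $v\in\Sing$ by Proposition~\ref{prop:no_focal_points}(7).

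Your ``limiting flat strip'' argument is the weak point: if $\widetilde\gamma_{w_{s_n}}\to\widetilde\gamma_v$, the strips may have widths tending to $0$ and the limit degenerates to a single geodesic, which is not a flat strip. The right observation is that no limit is required --- any \emph{one} of your strips already contains $\gamma_{\widetilde v}$ on its boundary, and that alone forces $K(\pi f_t v)=0$ for all $t$. Once you notice this, your own argument collapses to the direct one above.
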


\begin{proof}
It is a straightforward consequence of the flat strip theorem. Since
the flat strip theorem holds for manifolds without focal points (see
Proposition \ref{prop:no_focal_points}), the proof goes verbatim
as in \cite[Proposition 5.4]{BCFT2017}. 
\end{proof}

\begin{prop}
\label{prop:pressue_gap_bad_orbits}Let ${\cal B}_{T}(\eta)$ be the
collection of bad orbit segments defined as in ${\rm Definition}$$\ $\ref{def:decomposition}.
Then there exist $T_{0}$, $\eta_{0}>0$ such that 
\[
P([{\cal B}_{T_{0}}(\eta_{0})],\vp)<P(\varphi).
\]
\end{prop}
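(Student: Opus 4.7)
The plan is to argue by contradiction, combining the variational-type inequality $P(\mathcal{C}, \vp) \le \sup_{\mu \in \mathcal{M}(\mathcal{C})} P_\mu(\vp)$ recalled above with the pressure gap $P(\Sing, \vp) < P(\vp)$ supplied by the hypothesis of Theorem A. Suppose the conclusion fails, so that $P([\cal{B}_T(\eta)], \vp) \ge P(\vp)$ for every $T, \eta > 0$. Fix sequences $T_n \to \infty$ and $\eta_n > 0$ with $\eta_n/T_n \to 0$, and for each $n$ choose an $\F$-invariant measure $\mu_n \in \mathcal{M}([\cal{B}_{T_n}(\eta_n)])$ with $P_{\mu_n}(\vp) \ge P(\vp) - 1/n$.

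The crucial step is to show that the $\mu_n$ become increasingly concentrated on $\Sing$. For any $(x, m) \in [\cal{B}_T(\eta)]$, the defining property of the bracket enlargement supplies $s, t \in [0,1]$ with $\int_{-s}^{m+t} \lt(f_\tau x)\, d\tau \le (m+s+t)\eta$, which implies $\int \lt \, d\delta_{(x,m)} \le \tfrac{m+2}{m}\eta$ for the empirical measure along $(x,m)$. Taking convex combinations and passing to the weak-$*$ limit (using continuity of $\lt$) yields $\int \lt \, d\mu_n \le \eta_n$. Since $\mu_n$ is $\F$-invariant and $\lt(v) = \int_{-T_n}^{T_n} \lambda(f_\tau v) \, d\tau$, Fubini gives $\int \lt \, d\mu_n = 2T_n \int \lambda \, d\mu_n$, and therefore $\int \lambda \, d\mu_n \le \eta_n/(2T_n) \to 0$.

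Passing to a weak-$*$ subsequential limit $\mu_\infty$ of $(\mu_n)$, continuity of $\lambda$ combined with the bound above yields $\int \lambda \, d\mu_\infty = 0$; hence $\lambda = 0$ $\mu_\infty$-almost everywhere, and Lemma \ref{lem:supp_in_sing} forces $\mathrm{supp}(\mu_\infty) \subset \Sing$. On the other hand, upper semi-continuity of the entropy map (valid in our setting via entropy-expansivity) and continuity of $\vp$ yield $P_{\mu_\infty}(\vp) \ge \limsup_n P_{\mu_n}(\vp) \ge P(\vp)$. Since $\mu_\infty$ is supported on the invariant closed set $\Sing$, we also have $P_{\mu_\infty}(\vp) \le P(\Sing, \vp)$, contradicting the pressure gap.

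The main obstacle is the bookkeeping around the mismatch between $\cal{B}_T(\eta)$ and its bracket enlargement $[\cal{B}_T(\eta)]$: the empirical measures $\delta_{(x,m)}$ are not $\F$-invariant, so the Fubini identity $\int \lt \, d\mu = 2T \int \lambda \, d\mu$ can only be invoked after passing to the invariant weak-$*$ limit $\mu_n$. The factor $(m+2)/m$ in the empirical bound tends to $1$ as $m \to \infty$, so no error is introduced in the limit, and with this handled cleanly the argument follows the template of \cite[Lemma 5.10]{BCFT2017}.
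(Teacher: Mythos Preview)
Your argument is correct and uses the same ingredients as the paper's proof: the variational-type bound $P(\mathcal{C},\vp)\le\sup_{\mu\in\mathcal{M}(\mathcal{C})}P_\mu(\vp)$, the observation that measures in $\mathcal{M}([\mathcal{B}_T(\eta)])$ satisfy $\int\lambda_T\,d\mu\le\eta$ (hence $\int\lambda\,d\mu\le\eta/2T$ by invariance and Fubini), upper semi-continuity of the entropy map, and Lemma~\ref{lem:supp_in_sing}.

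The only real difference is organizational. The paper argues directly: it fixes $\delta<P(\vp)-P(\Sing,\vp)$, uses upper semi-continuity to find a weak-$*$ neighborhood of $\mathcal{M}(\Sing)$ on which $P_\mu(\vp)<P(\Sing,\vp)+\delta$, identifies $\mathcal{M}(\Sing)=\bigcap_{T,\eta}\mathcal{M}_{\lambda_T}(\eta)$, and then picks a single $(T_0,\eta_0)$ for which $\mathcal{M}_{\lambda_{T_0}}(\eta_0)$ already lies inside that neighborhood. You instead run a contradiction along sequences $(T_n,\eta_n)$ and extract a limit measure. Both routes are equivalent; the paper's direct version has the mild advantage of producing $(T_0,\eta_0)$ constructively and avoiding the double limit, while your version makes the role of the Fubini identity $\int\lambda_T\,d\mu=2T\int\lambda\,d\mu$ more explicit. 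One small simplification in your argument: since that Fubini identity holds for any fixed $T>0$, you do not actually need $T_n\to\infty$; fixing $T$ and sending $\eta_n\to 0$ already forces $\int\lambda\,d\mu_n\to 0$.
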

\begin{proof}
Let $D$ be the metric compatible with the weak{*} topology on the
space of $\mathcal{F}$-invariant probability measures $\mathcal{M}({\cal F})$.
Fix $\delta<P(\varphi)-P(\text{Sing},\varphi)$ and choose $\varepsilon>0$
such that 
\[
\mu\in\mathcal{M}({\cal F})\text{ with }D(\mu,\mathcal{M}(\text{Sing}))<\varepsilon\implies P_{\mu}(\varphi)-P(\text{Sing})<\delta.
\]
The existence of such $\varepsilon$ is guaranteed by the upper semi-continuity
of the entropy map $\mathcal{M}({\cal F})\ni\mu\mapsto h_{\mu}(f)$
which follows from the geodesic ${\cal F}:T^{1}S\to T^{1}S$ being
entropy-expansive (see Liu-Wang \cite{Liu:2016de}). From Lemma \ref{lem:lamb_T_and_sing}
and \ref{lem:supp_in_sing}, we have 
\[
\mathcal{M}(\text{Sing})=\bigcap\limits _{\substack{\eta>0,T>0}
}\mathcal{M}_{\lt}(\eta),
\]
where $\mathcal{M}_{\lt}(\eta)=\{\mu\in\mathcal{M}({\cal F})\colon\int\lt d\mu\leq\eta\}$.
Hence, we can find $T_{0},\eta_{0}>0$ such that 
\[
D(\mathcal{M}(\text{Sing}),\mathcal{M}_{\lambda_{T_{0}}}(\eta_{0}))<\varepsilon.
\]
In particular, for any $\mu\in{\cal M}_{\lambda_{T_{0}}}(\eta_{0})$,
we have 
\[
P_{\mu}(\varphi)<P(\text{Sing},\varphi)+\delta.
\]

Since it follows from the definition that ${\cal M}([{\cal B}_{T}(\eta))]\subset{\cal M}_{\lambda_{T}}(\eta)$,
we can verify that the pressure gap $P([{\cal B}_{T_{0}}(\eta_{0})],\varphi)<P(\varphi)$
holds for such choice of $\eta_{0}$ and $T_{0}$:

\[
P([{\cal B}_{T_{0}}(\eta_{0})],\varphi)\leq\sup\limits _{\mu\in{\cal M}([{\cal B}_{T_{0}}(\eta_{0})])}P_{\mu}(\vp)\leq\sup\limits _{\mu\in{\cal M}_{\lambda_{T_{0}}}(\eta_{0})}P_{\mu}(\varphi)\leq\delta+P(\text{Sing},\varphi)<P(\varphi).
\]
This proves the proposition.
\end{proof}

\begin{rem}
We remark that the conclusion of Proposition 7.3 remains to hold if
we take $(T_{0},\eta_{1})$ for any $\eta_{1}\in(0,\eta_{0})$.
\end{rem}

Now, we are ready to prove our first main theorem.

\begin{thm*}
[Theorem A] Let $S$ be a surface of genus greater than or equal
to 2 without focal points and ${\cal F}$ be the geodesic flow over
$S$. Let $\vphi:T^{1}S\to\R$ be a H\"{o}lder continuous potential
or $\vp=q\cdot\vp^{u}$ for some $q\in\R$. Suppose $\vp$ verifies
the pressure gap property $P({\rm Sing},\vphi)<P(\vphi)$, then $\vphi$
has a unique equilibrium state $\mu_{\vphi}$.
\end{thm*}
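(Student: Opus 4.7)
The strategy is to apply the Climenhaga--Thompson criteria (Theorem \ref{thm:C-T_criteria}) with the decomposition $(\mathcal{P},\mathcal{G},\mathcal{S})=(\mathcal{B}_T(\eta),\mathcal{G}_T(\eta),\mathcal{B}_T(\eta))$ introduced in Definition \ref{def:decomposition}, for suitably chosen parameters $T,\eta>0$. Under the standing pressure gap $P(\mathrm{Sing},\varphi)<P(\varphi)$, I need to check the four hypotheses of Theorem \ref{thm:C-T_criteria}: the decomposition axioms, specification on $\mathcal{G}$, the Bowen property on $\mathcal{G}$, the pressure gap for $[\mathcal{P}]\cup[\mathcal{S}]$, and the expansivity gap $P_{\exp}^{\perp}(\varphi)<P(\varphi)$.

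The plan is as follows. First I would fix $T=T_0$ and $\eta=\eta_0$ exactly as produced by Proposition \ref{prop:pressue_gap_bad_orbits}, so that $P([\mathcal{B}_{T_0}(\eta_0)],\varphi)<P(\varphi)$; since $[\mathcal{P}]=[\mathcal{S}]=[\mathcal{B}_{T_0}(\eta_0)]$ this immediately yields hypothesis (III), $P([\mathcal{P}]\cup[\mathcal{S}],\varphi)<P(\varphi)$. Next, specification for $\mathcal{G}_{T_0}(\eta_0)$ follows directly from Proposition \ref{prop: specification}, giving hypothesis (I). For hypothesis (II), I split on the form of $\varphi$: if $\varphi$ is H\"older, Theorem \ref{thm:Holder_Bowen} shows $\varphi$ has the Bowen property on $\mathcal{G}_{T_0}(\eta_0)$; if $\varphi=q\varphi^u$ is a scalar multiple of the geometric potential, Theorem \ref{thm:geometric-potential-Bowen} gives the Bowen property for $\varphi^u$, and multiplying a Bowen function by a scalar preserves the Bowen property. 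The fact that the decomposition triple genuinely decomposes every orbit segment is built into the definition of $p,g,s$ in Section \ref{sec:Decompositions-for-finite}, so the decomposition axioms are automatic.

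It remains to verify the expansivity obstruction bound $P_{\exp}^{\perp}(\varphi)<P(\varphi)$. By Proposition \ref{prop:exp<sing} (which uses the flat strip theorem, valid in the no focal points setting by Proposition \ref{prop:no_focal_points}(6)), we have
\[
P_{\exp}^{\perp}(\varphi)\le P(\mathrm{Sing},\varphi)<P(\varphi),
\]
where the strict inequality is the hypothesized pressure gap. With all four conditions of Theorem \ref{thm:C-T_criteria} verified, we conclude that $\varphi$ admits a unique equilibrium state $\mu_\varphi$.

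The main subtlety I anticipate is ensuring the parameters $T_0,\eta_0$ supplied by Proposition \ref{prop:pressue_gap_bad_orbits} are simultaneously compatible with the hypotheses of the specification and Bowen property results. Since Proposition \ref{prop: specification} and Theorems \ref{thm:Holder_Bowen}, \ref{thm:geometric-potential-Bowen} each hold for \emph{every} $T,\eta>0$, there is in fact no tension: any choice of $(T_0,\eta_0)$ witnessing the bad orbit pressure gap works for all three. Thus the proof truly is an assembly of the ingredients established in Sections \ref{sec:Decompositions-for-finite}--\ref{sec:The-Bowen-property}, invoked through the Climenhaga--Thompson machinery, with the only real input beyond those sections being Proposition \ref{prop:exp<sing} to convert the singular-set pressure gap into an expansivity pressure gap.
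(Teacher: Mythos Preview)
Your proposal is correct and follows essentially the same route as the paper's own proof: both verify the Climenhaga--Thompson criteria (Theorem~\ref{thm:C-T_criteria}) using the decomposition $(\mathcal{B}_{T_0}(\eta_0),\mathcal{G}_{T_0}(\eta_0),\mathcal{B}_{T_0}(\eta_0))$, invoking Proposition~\ref{prop: specification} for specification, Theorems~\ref{thm:Holder_Bowen} and~\ref{thm:geometric-potential-Bowen} for the Bowen property, Proposition~\ref{prop:pressue_gap_bad_orbits} for the bad-orbit pressure gap, and Proposition~\ref{prop:exp<sing} for the expansivity bound. Your additional remarks on parameter compatibility and on scalar multiples preserving the Bowen property are accurate and make the argument slightly more explicit than the paper's terse version.
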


\begin{proof}
This follows from Theorem \ref{thm:C-T_criteria} (Climenhaga-Thompson's
criteria for uniqueness of equilibrium states). 

We first notice that by Proposition \ref{prop:exp<sing}, $\vp$ satisfies
the first assumption in Theorem \ref{thm:C-T_criteria}. We take the
decomposition $({\cal P},{\cal G},{\cal S})=({\cal B}_{T}(\eta),{\cal G}_{T}(\eta),{\cal B}_{T}(\eta))$
given in Definition \ref{def:decomposition}, then by Proposition
\ref{prop: specification}, Theorem \ref{thm:Holder_Bowen}, and Theorem
\ref{thm:geometric-potential-Bowen}, the conditions $({\rm I})$
and $({\rm II})$ of Theorem \ref{thm:C-T_criteria} are verified. 

Lastly, by Proposition \ref{prop:pressue_gap_bad_orbits}, we know
there exists $(T,\eta)=(T_{0},\eta_{0})$ such that the set of bad
orbit segments has strictly less pressure than that of $\vp$, that
is, $P([{\cal B}_{T_{0}}(\eta_{0})],\vp)<P(\varphi)$, which verifies
the condition $({\rm III})$ of Theorem \ref{thm:C-T_criteria}.
\end{proof}

\section{Properties of the equilibrium states and the proof of Theorem B\label{sec:Prop_eq_states_Thm_B}}

In this section, we aim to prove Theorem B.

\begin{thm*}[Theorem B]
 Let $\vphi:T^{1}S\to\R$ be a H\"{o}lder continuous function or
$\vp=q\cdot\vp^{u}$ satisfying $P({\rm Sing},\vphi)<P(\vphi)$. Then
the equilibrium state $\mu_{\vphi}$ is fully supported, $\mu_{\varphi}({\rm Reg})=1$,
Bernoulli, and is the weak$^{*}$ limit of the weighted regular periodic
orbits.
\end{thm*}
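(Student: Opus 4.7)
The plan is to establish the four properties of $\mu_{\varphi}$ in turn, leveraging the decomposition $(\mathcal{B}_{T_0}(\eta_0), \mathcal{G}_{T_0}(\eta_0), \mathcal{B}_{T_0}(\eta_0))$ from Proposition \ref{prop:pressue_gap_bad_orbits} together with the Bowen property (Theorems \ref{thm:Holder_Bowen} and \ref{thm:geometric-potential-Bowen}), the specification property (Proposition \ref{prop: specification}), the closing lemma (Corollary \ref{cor: closing_lemma_regular}), and the ergodicity of $\mu_{\varphi}$ remarked after Theorem \ref{thm:C-T_criteria}.

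First I would prove $\mu_{\varphi}(\mathrm{Reg})=1$, which is the easiest step. Since $\mathrm{Sing}$ is flow-invariant and $\mu_{\varphi}$ is ergodic, $\mu_{\varphi}(\mathrm{Sing})\in\{0,1\}$. If it equals $1$, then $\mu_{\varphi}\in\mathcal{M}(\mathrm{Sing})$, hence
\[
P(\varphi)=h_{\mu_{\varphi}}(\mathcal{F})+\int\varphi\,d\mu_{\varphi}\leq P(\mathrm{Sing},\varphi)<P(\varphi),
\]
contradicting the pressure gap hypothesis. Next, for full support: the Climenhaga--Thompson machinery under hypotheses (I)--(III) yields a quasi-Gibbs lower bound on $\mathcal{G}_{T_0}(\eta_0)$, namely there exist $\delta,Q>0$ with $\mu_{\varphi}(B_t(v,\delta))\geq Q e^{\Phi(v,t)-tP(\varphi)}$ for every $(v,t)\in\mathcal{G}_{T_0}(\eta_0)$. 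Combined with the transitivity of $\mathcal{F}$ (Proposition \ref{prop:no_focal_points}) and the specification property, every non-empty open set $U\subset T^1S$ contains a Bowen ball $B_t(v,\delta)$ with $(v,t)\in\mathcal{G}_{T_0}(\eta_0)$, hence $\mu_{\varphi}(U)>0$.

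For the equidistribution statement I would apply Proposition \ref{prop:pressure_equal_equidistribution}: it suffices to show there exists $\Delta>0$ with $P^{*}_{\mathrm{Reg},\Delta}(\varphi)=P(\varphi)$. The upper bound is immediate from the variational principle since closed regular geodesics of length in $(t-\Delta,t]$ form a $(t-\Delta,\delta)$-separated set for $\delta$ smaller than the injectivity radius on regular periodic orbits. For the lower bound, I would take a maximal $(t,\delta)$-separated subset $E_t\subset\mathcal{G}_{T_0}(\eta_0)_t$; the pressure-gap estimate $P([\mathcal{B}_{T_0}(\eta_0)],\varphi)<P(\varphi)$ guarantees that $\sum_{v\in E_t}e^{\Phi(v,t)}$ still grows at rate $P(\varphi)$. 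The refined closing lemma (Corollary \ref{cor: closing_lemma_regular}) converts each $(v,t)\in E_t$ into a regular closed geodesic $\gamma_v$ of period in $(t,t+s(\delta)]$ shadowing $v$ at scale $\delta$, and the Bowen property (on the slightly enlarged collection) yields $|\Phi(\gamma_v)-\Phi(v,t)|\leq K$. Bounding the multiplicity of this assignment by a uniform constant (since any periodic orbit $\gamma$ can shadow only boundedly many points of a $(t,\delta)$-separated set at scale $\delta$) gives the required lower bound, hence the pressures coincide.

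The Bernoulli property is the main obstacle and will occupy most of the work. The strategy is the standard two-stage argument: first establish the Kolmogorov property for $(\mathcal{F},\mu_{\varphi})$, then upgrade K to Bernoulli. For the K-property I would verify Ledrappier's criterion: the product system $(\mathcal{F}\times\mathcal{F},\mu_{\varphi}\times\mu_{\varphi})$ on $T^1S\times T^1S$ is ergodic. This reduces to showing that $\mu_{\varphi}\times\mu_{\varphi}$ is the unique equilibrium state for the potential $\widetilde{\varphi}(v,w):=\varphi(v)+\varphi(w)$ under the product flow, which one obtains by re-running the Climenhaga--Thompson program on the product: the product decomposition $\mathcal{G}_{T_0}(\eta_0)\times\mathcal{G}_{T_0}(\eta_0)$ has specification, $\widetilde{\varphi}$ has the Bowen property there, and the product pressure gap $P(\widetilde{\varphi})-P(\mathrm{Sing}\times T^1S\cup T^1S\times\mathrm{Sing},\widetilde{\varphi})>0$ follows from the gap for $\varphi$ together with $\mu_{\varphi}(\mathrm{Reg})=1$. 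Once K is established, Bernoulli follows from the theorem of Call--Thompson (or the classical Ornstein--Weiss result for smooth flows with local product structure), applied to the non-uniformly hyperbolic geodesic flow on $T^1S$ restricted to the full-measure set $\mathrm{Reg}$, using that the stable and unstable horocycle foliations are $C^2$ (Proposition \ref{prop:horoshpere}) so that the local product structure on $\mathcal{G}_{T_0}(\eta_0)$ yields Pesin-theoretic Bernoulli partitions. The delicacy here, compared with the nonpositively curved case of \cite{BCFT2017}, is that the weaker convexity of Jacobi fields in our setting forces us to work through the coarser function $\lambda_T$ rather than $\lambda$, so all product-system estimates must be written in terms of $\lambda_T\oplus\lambda_T$, but the structural arguments go through essentially unchanged.
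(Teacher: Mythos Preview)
Your treatment of $\mu_{\varphi}(\mathrm{Reg})=1$, full support, and equidistribution is essentially the paper's own argument (Propositions \ref{prop:full_measure}, \ref{prop:fully_supported}, \ref{prop:weight_average_regular_orbits}); the only cosmetic difference is that the paper routes the lower Gibbs bound through $\mathcal{G}^{M}$ (Lemma \ref{lem:6.1}) and a density lemma (Lemma \ref{lem:6.2}) rather than phrasing it as ``every open set contains a good Bowen ball,'' and the equidistribution lower bound goes through $\mathcal{G}^{M}$ rather than $\mathcal{G}_{T_0}(\eta_0)$ directly, but the content is the same.

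The Bernoulli argument, however, takes a genuinely different and much harder route than the paper. The paper bypasses the entire product-system/K-property machinery: since $T^{1}S$ is $3$-dimensional, it suffices to show that $\mu_{\varphi}$ is a \emph{hyperbolic} measure (nonzero Lyapunov exponent $\mu_{\varphi}$-a.e.), and then invoke Ledrappier--Lima--Sarig \cite{Ledrappier:2016cn} (improved in \cite{Lima:2014tc}) as a black box to conclude Bernoulli directly. Hyperbolicity is a two-line consequence of what you have already proved: ergodicity together with $\mu_{\varphi}(\mathrm{Reg})=1$ and the inequality $\chi(v)\geq\lim_{t\to\infty}\frac{1}{t}\int_{0}^{t}k^{u}(f_{\tau}v)\,d\tau$ (Lemma \ref{lem:2.9}, Lemma \ref{lem:0LE_implies_Sing}) force $\chi(\mu_{\varphi})>0$. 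Your approach via product ergodicity is the one later developed by Call--Thompson precisely because Ledrappier--Lima--Sarig is unavailable above dimension $3$; here it is unnecessary overhead.

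Moreover, your K$\Rightarrow$Bernoulli step has a real gap as written. The classical Ornstein--Weiss theorem is for Anosov flows and does not apply to this non-uniformly hyperbolic situation; ``$C^{2}$ horocycles'' and ``Pesin-theoretic Bernoulli partitions'' do not by themselves upgrade K to Bernoulli without a Chernov--Haskell-type argument establishing very weak Bernoulli partitions, which is substantial additional work (and is exactly what Call--Thompson carry out). So either you end up doing that work, or you cite Ledrappier--Lima--Sarig after all --- in which case you may as well use it from the start, as the paper does.
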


\begin{proof}
The proof is separated into following propositions, namely, Proposition
\ref{prop:full_measure}, \ref{prop:weight_average_regular_orbits},
\ref{prop:fully_supported} and \ref{prop:Bernoulli}.
\end{proof}

\subsection{$\mu_{\vp}(\Reg)=1$ and $\mu_{\vp}$ is Bernoulli }

\begin{prop}
\label{prop:full_measure}$\mu_{\vphi}({\rm Reg})=1$. \end{prop}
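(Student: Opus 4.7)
The plan is a short ergodic-theoretic argument that turns the pressure gap hypothesis into a direct contradiction if $\mu_\varphi$ were to assign positive mass to $\Sing$. The two ingredients I would use are (a) flow-invariance (and closedness) of $\Sing$ established in Proposition 4.2, and (b) ergodicity of $\mu_\varphi$, which is noted in the Remark following Theorem \ref{thm:C-T_criteria} as a consequence of \cite[Proposition 4.19]{Climenhaga:2016ut}, since $\mu_\varphi$ was produced by the Climenhaga--Thompson criteria in the proof of Theorem A.

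Combining (a) and (b), the number $\mu_\varphi(\Sing)$ is either $0$ or $1$. I would then argue by contradiction: assume $\mu_\varphi(\Sing)=1$. Then $\mu_\varphi$ is supported on the closed invariant subsystem $\Sing$, so by the variational principle applied to $(\Sing,\mathcal{F}\vert_{\Sing})$,
\[
h_{\mu_\varphi}(\mathcal{F}) + \int \varphi\, d\mu_\varphi \;\leq\; P(\Sing,\varphi).
\]
On the other hand, $\mu_\varphi$ is an equilibrium state, so the left-hand side equals $P(\varphi)$. The pressure gap hypothesis $P(\Sing,\varphi)<P(\varphi)$ then forces $P(\varphi) < P(\varphi)$, a contradiction. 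Therefore $\mu_\varphi(\Sing)=0$ and $\mu_\varphi(\Reg)=1$.

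There is essentially no technical obstacle in this step; it is a clean consequence of the pressure gap, ergodicity, and invariance of $\Sing$. I note that ergodicity is not strictly necessary: even without it, one could decompose $\mu_\varphi = a\,\mu_1 + (1-a)\,\mu_2$ with $\mu_1 := a^{-1}\mu_\varphi\vert_{\Sing}$ and $\mu_2 := (1-a)^{-1}\mu_\varphi\vert_{\Reg}$ (both $\mathcal{F}$-invariant probability measures by invariance of $\Sing$), and use affinity of the entropy map together with the two variational bounds $P_{\mu_1}(\varphi)\leq P(\Sing,\varphi)$ and $P_{\mu_2}(\varphi)\leq P(\varphi)$ to derive $P(\varphi)\leq a\,P(\Sing,\varphi)+(1-a)\,P(\varphi)$, which again contradicts the pressure gap whenever $a>0$. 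Either route works, but the ergodic version is the more economical one to record.
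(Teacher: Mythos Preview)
Your argument is correct and is essentially identical to the paper's own proof: ergodicity of $\mu_\varphi$ (from \cite[Proposition 4.19]{Climenhaga:2016ut}) plus $\mathcal{F}$-invariance of $\Sing$ forces $\mu_\varphi(\Sing)\in\{0,1\}$, and $\mu_\varphi(\Sing)=1$ contradicts the pressure gap via the variational principle on $\Sing$. Your additional remark giving an ergodicity-free route via the affine decomposition is a nice observation but not used in the paper.
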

\begin{proof}
Since $\mu_{\varphi}$ is the unique equilibrium state for $\vphi$,
we have that $\mu_{\varphi}$ is ergodic (cf. \cite{Climenhaga:2016ut}
Proposition 4.19). Because ${\rm Sing}$ is ${\cal F}-$invariant
we have either ${\rm \mu_{\varphi}({\rm Sing)=1}}$ or ${\rm \mu_{\varphi}({\rm Sing)=0}}$.
Suppose $\mu_{\varphi}({\rm Sing})=1$, then 
\[
P({\rm Sing},\varphi)\geq h_{\mu_{\varphi}}({\cal F})+\int\left.\varphi\right|_{{\rm Sing}}\dd\mu_{\varphi}=P(\varphi),
\]
which contradicts to the pressure gap condition. Thus $\mu_{\varphi}({\rm Reg})=1$. 
\end{proof}

\begin{defn}
[Bernoulli] Let $X$ be a compact metric space and ${\cal F}=(f_{t})_{t\in\R}$
be a continuous flow on $X$ . We call a ${\cal F}-$invariant measure
$\mu$ \textit{Bernoulli} if the system $(X,f_{1},\mu)$ is measurably
isomorphic to a Bernoulli shift, where $f_{1}$ is the time-1 map
of the flow ${\cal F}=(f_{t})_{t\in\R}$. 
\end{defn}

To prove $\mu_{\vp}$ is Bernoulli, we use a result in Ledrappier-Lima-Sarig
\cite{Ledrappier:2016cn}. In order to apply their result, we recall
that for $v\in T^{1}S$, the \textit{Lyapunov exponent} at $v$ is
given by 
\[
\chi(v)=\lim_{t\to\pm\infty}\frac{1}{t}\log\left\Vert \left.df_{t}\right|_{E^{u}(v)}\right\Vert 
\]
whenever both limits exist and are equal. For those $v\in T^{1}S$
such that the Lyapunov exponent $\chi(v)$ exists at $v$ are called
\textit{Lyapunov regular} vectors. It is well-known (by Oseledec multiplicative
ergodic theorem) that the set of Lyapunov regular vectors has full
measure for any ${\cal F}$-invariant probability measure. We denote
$\chi^{+}(v)$ the \textit{positive Lyapunov exponent} at $v$, when
the limit exists, from the Oseledec decomposition.\textbf{ }

\begin{rem}
\label{rem:LE=00003D0_on_Sing}For $v\in\Sing,$ notice $f_{t}$ does
not expand along the unstable bundle $E^{u}(v)$; indeed, the unstable
Jacobi field $J_{v}^{u}$ has constant length for $v\in\Sing$. Thus
we have $\left.\chi\right|_{\Sing}=0$. 
\end{rem}

Using following lemmas, we can show that the unique equilibrium state
for $\mu_{\vp}$ is a hyperbolic measure (i.e., $\chi(v)\neq0$ for
$\mu_{\vp}-$a.e. $v\in T^{1}S$, which is equivalent to $\chi(\mu_{\vp}):=\int\chi(v)d\mu_{\vp}\neq0$
from the ergodicity of $\mu_{\vp})$ which allows us to use Ledrappier-Lima-Sarig
\cite{Ledrappier:2016cn} to conclude $\mu_{\vp}$ is Bernoulli.

\begin{lem}
\label{lem:0LE_implies_Sing}Let $\mu$ be a ${\cal F}$-invariant
probability measure. Suppose for $\mu-$a.e. $v\in T^{1}S$ such that
$\chi(v)=0$ then ${\rm supp}(\mu)\subset\Sing$. 
\end{lem}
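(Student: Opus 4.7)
The plan is to relate the Lyapunov exponent $\chi$ to the time average of the horocycle curvature $k^u$, so that the vanishing of $\chi$ forces $\lambda$ to vanish $\mu$-almost everywhere, at which point Lemma~\ref{lem:supp_in_sing} finishes the argument.

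First, I would reduce to the case that $\mu$ is ergodic. By the ergodic decomposition $\mu=\int\mu_\omega\,d\mathbb P(\omega)$, the hypothesis $\chi(v)=0$ for $\mu$-a.e.~$v$ passes to $\mathbb P$-a.e.~component $\mu_\omega$. If we have proved the conclusion for each such ergodic component $\mu_\omega$, then $\mathrm{supp}(\mu_\omega)\subset\mathrm{Sing}$ for $\mathbb P$-a.e.~$\omega$; since $\mathrm{Sing}$ is closed (Proposition~\ref{prop:no_focal_points}), $\mathrm{supp}(\mu)\subset\mathrm{Sing}$ as well.

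Next, assume $\mu$ is ergodic. Since $S$ is a surface, $E^u(v)$ is one-dimensional (Proposition~\ref{prop:no_focal_points}(2)). Fix a non-zero $\xi\in E^u(v)$; by Proposition~\ref{prop:k^u_and_Coddazi_eq}, the unstable Jacobi field $J_\xi$ satisfies $J_\xi'(t)=k^u(f_t v)J_\xi(t)$, hence
\[
\|J_\xi(t)\|=\|J_\xi(0)\|\exp\!\Big(\!\int_0^t k^u(f_\tau v)\,d\tau\Big).
\]
Combining this with the Sasaki-metric identity $\|df_t\xi\|^2=\|J_\xi(t)\|^2+\|J_\xi'(t)\|^2=\|J_\xi(t)\|^2(1+k^u(f_t v)^2)$ and using that $k^u$ is uniformly bounded on $T^1 S$ (so the $\log(1+k^u)$ boundary terms are $O(1)$), dividing by $t$ and letting $t\to\infty$ yields, for $\mu$-a.e.~$v$,
\[
\chi(v)=\lim_{t\to\infty}\frac{1}{t}\int_0^t k^u(f_\tau v)\,d\tau.
\]

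Finally, by the Birkhoff ergodic theorem together with ergodicity of $\mu$, the right-hand side equals $\int k^u\,d\mu$ for $\mu$-a.e.~$v$. The hypothesis $\chi(v)=0$ $\mu$-a.e.~combined with $k^u\ge 0$ forces $\int k^u\,d\mu=0$, and therefore $k^u=0$ $\mu$-a.e. Since $\lambda=\min(k^u,k^s)\le k^u$, we conclude $\lambda=0$ $\mu$-a.e., and Lemma~\ref{lem:supp_in_sing} gives $\mathrm{supp}(\mu)\subset\mathrm{Sing}$. The main technical point is the identification of $\chi(v)$ with the time average of $k^u$; this is where both the one-dimensionality of $E^u$ (surface hypothesis) and the Riccati-type relation of Proposition~\ref{prop:k^u_and_Coddazi_eq} (no-focal-points hypothesis) are essential.
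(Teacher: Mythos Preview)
Your proof is correct and follows essentially the same route as the paper: both relate $\chi(v)$ to the time average of $k^{u}$ along the orbit, use $k^{u}\geq 0$ to conclude $k^{u}=0$ (hence $\lambda=0$) $\mu$-a.e., and then invoke Lemma~\ref{lem:supp_in_sing}. The only cosmetic differences are that you obtain the exact identity $\chi(v)=\lim_{t\to\infty}\frac{1}{t}\int_{0}^{t}k^{u}(f_{\tau}v)\,d\tau$ (the paper is content with the inequality $\chi(v)\geq\cdots$ from Lemma~\ref{lem:2.9}), and your preliminary reduction to ergodic $\mu$ is unnecessary since integrating the pointwise relation against $\mu$ and applying Birkhoff already yields $\int k^{u}\,d\mu=0$ without any ergodicity assumption.
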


\begin{proof}
We first recall that for $\xi\in T_{v}T^{1}S$ we have $||J_{\xi}(t)||^{2}\leq||df_{t}\xi||^{2}$.
Let $\mu\in{\cal M}({\cal F})$ and, without loss of generality, we
may assume $v$ is a Lyapunov regular vector for $\xi\in E_{v}^{u}$.
Then, by Lemma \ref{lem:2.9}
\begin{alignat*}{1}
\chi(v) & =\lim_{t\to\infty}\frac{1}{t}\log||df_{t}|_{E_{v}^{u}}||\\
 & \geq\lim_{t\to\infty}\frac{1}{t}\log||J_{\xi}^{u}(t)||\\
 & \geq\lim_{t\to\infty}\frac{1}{t}\log\left(e^{\int_{0}^{t}k^{u}(f_{\tau}v)d\tau}||J_{\xi}^{u}(0)||\right)\\
 & =\lim_{t\to\infty}\frac{1}{t}\int_{0}^{t}k^{u}(f_{\tau}v)d\tau\geq0.
\end{alignat*}
Integrating with respect to $\mu$, the Birkhoff ergodic theorem gives
$\int\chi(v)d\mu\geq\int k^{u}(v)d\mu\geq0$. Therefore, if $\chi(v)=0$
for $\mu-a.e.$ $v\in T^{1}S$, then $k^{u}(v)=0$ for $\mu-a.e.$
$v\in T^{1}S$; hence, $\lambda(v)=0$ for $\mu-a.e.$ $v\in T^{1}S$.
By Lemma \ref{lem:supp_in_sing}, we are done. 
\end{proof}

\begin{rem}
\label{rem:ergodic_Reg>0_hyperbolic}$\ $

\begin{enumerate}[font=\normalfont]

\item\label{rem:chi(v)>=00003D0}The computation in the above lemma
also points out that if $\mu$ is a ${\cal F}-$invariant probability
measure and $v$ is a Lyapunov regular vector with respect to $\mu$,
then $\chi(v)\geq0$.

\item \label{rem:ergodic+Reg>0=00003D>hyperbolic}If, in addition,
$\mu$ is ergodic and $\mu(\Reg)>0$, we have $\mu$ is hyperbolic.
Indeed, otherwise, there exists $A\subset T^{1}S$ such that $\mu(A)>0$
and $\left.\chi\right|_{A}=0$. Then by the ergodicity of $\mu$ we
have that $\mu(A)=1$. Hence, by Lemma \ref{lem:0LE_implies_Sing},
we get ${\rm supp}\mu\subset{\rm Sing}$ which contradicts $\mu(\Reg)>0$. 

\end{enumerate}
\end{rem}

\begin{prop}
\label{prop:Bernoulli}The unique equilibrium state $\mu_{\varphi}$
is Bernoulli.\end{prop}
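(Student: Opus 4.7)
The plan is to reduce to the Bernoulli criterion of Ledrappier--Lima--Sarig \cite{Ledrappier:2016cn}, which asserts that for a $C^{\infty}$ surface, any ergodic hyperbolic invariant measure of positive entropy for the geodesic flow is Bernoulli. Thus the task reduces to checking that $\mu_{\varphi}$ is (i) ergodic, (ii) hyperbolic, and (iii) of positive measure-theoretic entropy.

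First I would recall that $\mu_{\varphi}$ is ergodic: this is the standard consequence of the Climenhaga--Thompson uniqueness theorem already noted in the remark after Theorem~\ref{thm:C-T_criteria}, since uniqueness of an equilibrium state in our setting forces it to be an extremal point of $\mathcal{M}({\cal F})$. Next, Proposition~\ref{prop:full_measure} gives $\mu_{\varphi}({\rm Reg})=1$, so in particular $\mu_{\varphi}({\rm Reg})>0$. Combining ergodicity with this fact, Remark~\ref{rem:ergodic_Reg>0_hyperbolic}(\ref{rem:ergodic+Reg>0=00003D>hyperbolic}) applies verbatim and yields that $\mu_{\varphi}$ is hyperbolic, i.e.\ $\chi(v)\neq 0$ for $\mu_{\varphi}$-a.e.\ $v$. (The content of that remark is simply that if the nonnegative Lyapunov exponent vanished on a positive-measure set, ergodicity would force it to vanish $\mu$-a.e., and then Lemma~\ref{lem:0LE_implies_Sing} would push the support into $\text{Sing}$, contradicting $\mu_{\varphi}({\rm Reg})=1$.)

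For positive entropy, the cleanest route is the pressure gap itself. If $h_{\mu_{\varphi}}({\cal F})=0$, then by the variational principle applied to the trivial decomposition of $T^{1}S$ into $\Sing$ and $\Reg$, we would have
\[
P(\varphi) \;=\; h_{\mu_{\varphi}}({\cal F}) + \int \varphi\, d\mu_{\varphi} \;=\; \int \varphi\, d\mu_{\varphi} \;\leq\; \sup_{\mu \in \mathcal{M}({\cal F})} \int \varphi\, d\mu.
\]
Together with hyperbolicity and the Ruelle/Pesin inequality $h_{\mu_{\varphi}}({\cal F})\geq \int \chi^{+} d\mu_{\varphi} > 0$ (valid because $\mu_{\varphi}$ is ergodic, hyperbolic, and charges $\Reg$ where $\chi^{+}>0$), we directly obtain $h_{\mu_{\varphi}}({\cal F})>0$. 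Alternatively one may argue by contradiction: a zero-entropy ergodic equilibrium state would, by Lemma~\ref{lem:supp_in_sing} and Remark~\ref{rem:LE=00003D0_on_Sing}, be supported on $\Sing$, violating the pressure gap $P(\Sing,\varphi)<P(\varphi)$.

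With ergodicity, hyperbolicity, and positive entropy established, the Ledrappier--Lima--Sarig theorem applies to the $C^{\infty}$ surface $S$ and delivers that $(T^{1}S, f_{1}, \mu_{\varphi})$ is measurably isomorphic to a Bernoulli shift. The only real subtlety I anticipate is verifying that the Ledrappier--Lima--Sarig framework (originally stated for nonpositive curvature or negative curvature in its cleanest forms) genuinely covers the no-focal-points setting; however, their symbolic coding argument only uses that the flow is a $C^{\infty}$ surface geodesic flow and that the measure is hyperbolic and of positive entropy, so no curvature hypothesis enters, and the result applies to $\mu_{\varphi}$ directly.
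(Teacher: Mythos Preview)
Your overall route---ergodicity from Climenhaga--Thompson, hyperbolicity via Proposition~\ref{prop:full_measure} and Remark~\ref{rem:ergodic_Reg>0_hyperbolic}, then Ledrappier--Lima--Sarig---is exactly the paper's argument. The divergence is in the extra step you insert to secure positive entropy, and that step contains a genuine error.

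You write ``the Ruelle/Pesin inequality $h_{\mu_{\varphi}}({\cal F})\geq \int \chi^{+}\, d\mu_{\varphi}$''. Ruelle's inequality goes the other way: $h_{\mu}\leq \int \chi^{+}\, d\mu$. Pesin's formula gives equality only for measures absolutely continuous along unstables (e.g.\ Liouville), not for a general equilibrium state, so you cannot conclude $h_{\mu_{\varphi}}>0$ from $\int\chi^{+}\,d\mu_{\varphi}>0$. Your alternative argument is also broken: zero entropy does not, via Lemma~\ref{lem:supp_in_sing} and Remark~\ref{rem:LE=00003D0_on_Sing}, force the support into $\Sing$---those results link vanishing of $\lambda$ (or of $\chi$) to $\Sing$, and neither is implied by $h_{\mu}=0$.

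The paper sidesteps this entirely: it notes (in the remark following the proof) that while \cite{Ledrappier:2016cn} originally assumed $h_{\mu}>0$, the improved result of Lima--Sarig \cite[Theorem~1.3]{Lima:2014tc} requires only that $\mu$ be hyperbolic. So once ergodicity and hyperbolicity are in hand, no separate positive-entropy verification is needed. If you want to keep your write-up self-contained under the original LLS hypothesis, you would need a different (and not entirely trivial) argument for $h_{\mu_{\varphi}}>0$; the cleaner fix is simply to cite Lima--Sarig as the paper does.
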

\begin{proof}
\cite[Proposition 4.19]{Climenhaga:2016ut} shows that the unique
equilibrium state $\mu_{\vp}$ is ergodic, thus by Proposition \ref{prop:full_measure}
and Remark \ref{rem:ergodic_Reg>0_hyperbolic} (\ref{rem:ergodic+Reg>0=00003D>hyperbolic})
we get that $\mu_{\vp}$ is hyperbolic. Therefore, applying results
in \cite{Ledrappier:2016cn}, we have that $\mu_{\vp}$ is Bernoulli. 
\end{proof}

\begin{rem}
Originally Ledrappier-Lima-Sarig \cite{Ledrappier:2016cn} required
that $h_{\mu}({\cal F})>0$; nevertheless, it has been improved in
Lima-Sarig \cite[Theorem 1.3]{Lima:2014tc} that one only needs to
check $\mu$ is hyperbolic.
\end{rem}

\subsection{$\mu_{\vp}$ is fully supported}

In this subsection, unless stated otherwise, we fix the decomposition
$({\cal P},$ ${\cal G}$, ${\cal S})$ to be $({\cal B}_{T_{0}}(\eta_{0}),{\cal G}_{T_{0}}(\eta_{0}),{\cal B}_{T_{0}}(\eta_{0}))$
where $T_{0}$ and $\eta_{0}$ are given in Proposition \ref{prop:pressue_gap_bad_orbits}.
We notice that this decomposition $({\cal B}_{T_{0}}(\eta_{0}),{\cal G}_{T_{0}}(\eta_{0}),{\cal B}_{T_{0}}(\eta_{0}))$
satisfies the the Climenhaga-Thompson criteria for the uniqueness
of equilibrium states (i.e., Theorem \ref{thm:C-T_criteria}).

For any decomposition $({\cal P},$ ${\cal G}$, ${\cal S})$ and
$M>0$, the collection ${\cal G}^{M}$ is defined as 
\[
{\cal G}^{M}:=\{(x,t):\ s(x,t),p(x,t)\leq M\}.
\]

The following lemma shows that if the decomposition $({\cal P},$
${\cal G}$, ${\cal S})$ satisfies Theorem \ref{thm:C-T_criteria},
then ${\cal G}^{M}$ captures much thermodynamic information whenever
$M$ is large enough.

\begin{lem}
\cite[Lemma 6.1]{BCFT2017}\label{lem:6.1} There exists $M,C,\delta>0$
such that for all $t>0$, 
\begin{equation}
\Lambda({\cal G}^{M},\delta,t)>Ce^{tP(\vp)}.\label{eq:g^M_pressure_lower_bound}
\end{equation}
Hence, for large enough $M$, we have $P({\cal G}^{M},\vphi)=P(\vphi)$.
Moreover, the equilibrium state $\mu_{\phi}$ has the lower Gibbs
property on ${\cal G}^{M}$. More precisely, for any $\rho>0$, there
exist $Q,\tau,M>0$ such that for every $(v,t)\in{\cal G}^{M}$ with
$t\geq\tau$, 
\[
\mu_{\vphi}(B_{t}(v,\rho))\geq Qe^{-tP(\vphi)+\int_{0}^{t}\vphi(f_{s}v)\dd s}.
\]
Therefore, for any $(v,t)\in{\cal G}$ with large $t$ we have $\mu_{\vphi}(B(v,\rho))>0.$ 
\end{lem}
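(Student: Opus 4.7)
The plan is to follow the two-step strategy of \cite[Lemma 6.1]{BCFT2017}. First I would establish the counting lower bound $\Lambda(\mathcal{G}^M, \delta, t) > C e^{tP(\varphi)}$, from which the pressure equality $P(\mathcal{G}^M, \varphi) = P(\varphi)$ is immediate. Second, I would derive the lower Gibbs bound for $\mu_\varphi$ on $\mathcal{G}^M$, and the positivity of $\mu_\varphi(B(v,\rho))$ for $(v,t) \in \mathcal{G}$ with $t$ large will then follow because every such $(v,t)$ contains a sub-segment in $\mathcal{G}^M$ once $t > 2M$.

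For the counting estimate, fix $\delta$ smaller than the scale at which specification holds for $\mathcal{G}$. By the variational definition of pressure at a scale there is a constant $C_0$ with $\Lambda(X \times [0,\infty), \delta, t) \geq C_0 e^{tP(\varphi)}$ for all $t$. Given a $(t,\delta)$-separated set $E_t$ witnessing this sum, decompose each $(v,t) \in E_t$ as $p(v,t) + g(v,t) + s(v,t) = t$ via the decomposition $(\mathcal{P},\mathcal{G},\mathcal{S}) = (\mathcal{B}_{T_0}(\eta_0),\mathcal{G}_{T_0}(\eta_0),\mathcal{B}_{T_0}(\eta_0))$. Grouping $E_t$ by the integer parts of $(p,s)$ and using the telescoping estimate (as in \cite[Lemma~4.14]{Climenhaga:2016ut} or \cite[Lemma 6.1]{BCFT2017}), one obtains, for each such pair $(p,s)$,
\[
\sum_{(v,t) \in E_t,\ \lfloor p(v,t) \rfloor = p,\ \lfloor s(v,t)\rfloor = s} e^{\Phi(v,t)} \;\leq\; \Lambda([\mathcal{P}],\varphi,\delta/3,p)\cdot \Lambda(\mathcal{G},\varphi,\delta/3,g)\cdot \Lambda([\mathcal{S}],\varphi,\delta/3,s) \cdot e^{K},
\]
where $K$ is a bound from the Bowen property and the potential's oscillation over unit time. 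By the pressure gap Proposition \ref{prop:pressue_gap_bad_orbits}, pick $\gamma>0$ and $C_1>0$ with $\Lambda([\mathcal{P}]\cup[\mathcal{S}],\varphi,\delta/3,r)\leq C_1 e^{r(P(\varphi)-\gamma)}$ for all $r$, while $\Lambda(\mathcal{G},\varphi,\delta/3,g)\leq C_1 e^{gP(\varphi)}$ from the variational principle for the full flow. Summing the grouped contributions over $(p,s)$ with $\max(p,s)>M$ gives a bound $O(t^2 e^{-M\gamma}) e^{tP(\varphi)}$, which is strictly less than $\tfrac{C_0}{2}e^{tP(\varphi)}$ once $M = M(\gamma,C_0,C_1)$ is chosen sufficiently large. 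The remaining contribution is supported on orbits with $p,s \leq M$, i.e.\ in $\mathcal{G}^M$ (up to the boundary thickening $[\cdot]$), yielding $\Lambda(\mathcal{G}^M, \varphi,\delta,t) \geq \tfrac{C_0}{2} e^{tP(\varphi)}$.

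For the lower Gibbs property, I would apply \cite[Proposition 4.6 and Theorem 5.5]{Climenhaga:2016ut}: once Theorem \ref{thm:C-T_criteria} produces a unique equilibrium state, that state automatically satisfies a lower Gibbs bound on $\mathcal{G}^M$. Concretely, given $\rho>0$ choose scale $\delta < \rho/2$ smaller than the Bowen constant for $\varphi$ on $\mathcal{G}$ and the specification scale. For any $(v,t) \in \mathcal{G}^M$ with $t \geq \tau$, use the specification property (Proposition \ref{prop: specification}) applied to the orbit segment $(v,t)$ together with an arbitrary $(u,t')\in \mathcal{G}^{M}$ from a $(t',\delta)$-separated set realizing $\Lambda(\mathcal{G}^M,\varphi,\delta,t')$: the glued orbit lies in $B_t(v,\rho)$, and combining the lower count from Step~1 with the Bowen property (Theorems \ref{thm:Holder_Bowen} and \ref{thm:geometric-potential-Bowen}) bounds the $\Phi$-weighted count of glued orbits inside $B_t(v,\rho)$ from below by $Q e^{-tP(\varphi) + \Phi(v,t)}\cdot e^{t'P(\varphi)}$. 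Normalizing and passing to the weak-$*$ limit that defines $\mu_\varphi$ then yields $\mu_\varphi(B_t(v,\rho)) \geq Q e^{-tP(\varphi)+\Phi(v,t)}$.

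The main obstacle is the bookkeeping in the telescoping decomposition argument that separates the pressure contributions of $[\mathcal{P}]$, $\mathcal{G}$, and $[\mathcal{S}]$: the orbits being counted do not factor cleanly into independent prefix/middle/suffix pieces because the decomposition functions $p,g,s$ depend on the whole orbit, so one must group by $(p,s)$-classes and verify that each group can be $(r,\delta/3)$-separated in the three constituent pieces without double-counting. This is exactly the technical step carried out in \cite[Lemma 6.1]{BCFT2017}; since our decomposition $(\mathcal{B}_{T_0}(\eta_0),\mathcal{G}_{T_0}(\eta_0),\mathcal{B}_{T_0}(\eta_0))$ together with specification (Proposition \ref{prop: specification}), the Bowen property (Theorems \ref{thm:Holder_Bowen} and \ref{thm:geometric-potential-Bowen}), and the pressure gap (Proposition \ref{prop:pressue_gap_bad_orbits}) provide the same structural inputs as in the nonpositively curved case, the argument transfers mutatis mutandis.
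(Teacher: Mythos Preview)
The paper does not supply its own proof of this lemma; it simply cites \cite[Lemma 6.1]{BCFT2017} and states the conclusion, relying on the fact that the decomposition $(\mathcal{B}_{T_0}(\eta_0),\mathcal{G}_{T_0}(\eta_0),\mathcal{B}_{T_0}(\eta_0))$ together with specification, the Bowen property, and the pressure gap furnish exactly the same inputs as in the nonpositive curvature case. Your proposal is a faithful reconstruction of that argument (the telescoping/grouping estimate from \cite{Climenhaga:2016ut} combined with the pressure gap to isolate $\mathcal{G}^M$, and the lower Gibbs bound from \cite[Theorem 5.5]{Climenhaga:2016ut}), so it matches the intended approach. One minor remark: you do not need the ``sub-segment'' reasoning for the final claim, since any $(v,t)\in\mathcal{G}=\mathcal{G}_{T_0}(\eta_0)$ already has $p(v,t)=s(v,t)=0$ by the definition of the decomposition, and hence lies in $\mathcal{G}^M$ for every $M\geq 0$; the positivity of $\mu_\varphi(B(v,\rho))$ then follows directly from $B_t(v,\rho)\subset B(v,\rho)$ and the lower Gibbs bound.
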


\begin{lem}
\cite[Lemma 6.2]{BCFT2017}\label{lem:6.2}Given $\rho,\eta,T>0$,
there exists $\eta_{1}>0$ so that for any $v\in{\rm Reg}_{T}(\eta)$,
$t>0$, there are $s\geq t$ and $w\in B(v,\rho)$ such that $(w,s)\in{\cal G}_{T}(\eta_{1})$.
In particular, we can choose $\eta_{1}\leq\eta_{0}$ where $\eta_{0}$
is given in Proposition \ref{prop:pressue_gap_bad_orbits}\end{lem}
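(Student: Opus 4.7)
The plan is to construct $w$ near $v$ whose orbit of a suitable length $s \geq t$ shadows a fixed reference closed regular geodesic for most of the interval $[0,s]$. Along the reference, $\lambda_T$ is bounded below by a positive constant, so partial averages of $\lambda_T$ along the shadowing portion are controlled; the short initial transition near $v$ and the terminal portion near the reference are handled via uniform continuity of $\lambda_T$.

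First I fix $v_0$ tangent to a closed regular geodesic of period $\pi_0$, whose existence is guaranteed by Corollary \ref{cor: closing_lemma_regular}. Since $v_0 \in \Reg$ and $\Sing$ is flow-invariant, the entire orbit of $v_0$ lies in $\Reg$, and Lemma \ref{lem:lamb_T_and_sing} together with compactness of the closed orbit yield $\bar\eta := \inf_\theta \lambda_T(f_\theta v_0) > 0$. Set $\eta' := \min(\eta,\bar\eta)$, so both $v$ and $v_0$ lie in $\Reg_T(\eta')$. I apply Proposition \ref{cor: connecting orbits} with parameter $\eta'$ at a small scale $\rho' \leq \rho$ to the pair $(v, v_0)$, obtaining $\tau_0 \in [0, a(\rho')]$ and $w := [v, v_0]_{\tau_0}$ with $w \in W^u_{\rho'}(v)$ and $f_{\tau_0} w \in W^{cs}_{\rho'}(v_0)$. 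I then choose $N \in \mathbb{N}$ so that $s := \tau_0 + N\pi_0 \geq t$ and $N \pi_0 \gg \tau_0$. Writing $f_{\tau_0} w = f_\sigma u$ with $u \in W^s_{\rho'}(v_0)$ and $|\sigma| \leq \rho'$, the non-increase of $d^s$ along the forward flow (a feature of no-focal-points manifolds) combined with the periodicity of $v_0$ gives $d_K(f_\tau w, f_{\tau - \tau_0 - \sigma} v_0) \leq 2\rho'$ for every $\tau \in [\tau_0, s]$; in particular $f_s w$ lies within $2\rho'$ of $v_0$ itself.

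To verify $(w,s) \in \mathcal{G}_T(\eta_1)$, I estimate partial averages from both ends. Uniform continuity of $\lambda_T$ and unit speed of the geodesic flow supply constants $\delta_0, \theta_0 > 0$ (depending only on $\eta', T$) such that $\lambda_T(f_\theta w') \geq \eta'/2$ whenever $w' \in B(u, \delta_0)$ for some $u \in \Reg_T(\eta')$ and $\theta \in [0, \theta_0]$. Applied at $u = v$ (taking $\rho' < \delta_0$) and at $u = v_0$, this produces good initial and terminal segments of length $\theta_0$ along $w$'s orbit, each contributing $\theta_0 \eta'/2$ to the relevant integral. On the shadowing interval $\tau \in [\tau_0, s]$, the estimate $d_K(f_\tau w, f_{\tau - \tau_0 - \sigma} v_0) \leq 2\rho'$ and uniform continuity of $\lambda_T$ give $\lambda_T(f_\tau w) \geq \bar\eta - \epsilon$ for arbitrarily small $\epsilon$. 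Combining, $\int_0^{\tau'} \lambda_T(f_\theta w) d\theta \geq \tau' \eta_1$ for every $\tau' \in [0, s]$, with $\eta_1 := \min\bigl(\eta'/2,\, (\bar\eta-\epsilon)/2,\, \theta_0 \eta'/(2 a(\rho'))\bigr)$; the backward inequality follows by the symmetric argument from the terminal endpoint. Since $\mathcal{G}_T(\eta_1') \supset \mathcal{G}_T(\eta_1)$ whenever $\eta_1' \leq \eta_1$, we can freely decrease $\eta_1$ to ensure $\eta_1 \leq \eta_0$.

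The main obstacle is to keep the partial average above $\eta_1$ at intermediate times $\tau' \approx \tau_0$, where the orbit has just left a neighborhood of $v$ but has not yet accumulated meaningful time shadowing $v_0$; during this transition $\lambda_T(f_\theta w)$ could in principle drop to zero. The uniform good-initial-segment of length $\theta_0$ is precisely what handles this: it supplies the contribution $\theta_0 \eta'/2$ to $\int_0^{\tau'} \lambda_T$ regardless of what happens on the remainder $(\theta_0, \tau_0]$, yielding the worst-case lower bound $\theta_0 \eta'/(2 a(\rho'))$ on the partial average at $\tau' = \tau_0$. Since $\theta_0$ and $a(\rho')$ depend only on $\eta, T, \rho$, this produces a uniform positive $\eta_1$ independent of the particular $v \in \Reg_T(\eta)$ and $t > 0$.
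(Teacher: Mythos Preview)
Your argument is correct. The paper does not spell out its own proof, deferring to \cite[Lemma~6.2]{BCFT2017} with the single instruction to replace their Corollary~3.11 by Lemma~\ref{lem:39} (the exponential contraction estimate on $\mathcal{G}_T(\eta)$). Your route follows the same overall strategy---connect $v$ to a fixed periodic regular reference orbit via Proposition~\ref{cor: connecting orbits}, then control the partial averages of $\lambda_T$ along the resulting orbit segment---but it bypasses Lemma~\ref{lem:39} entirely: once $f_{\tau_0}w\in W^{cs}_{\rho'}(v_0)$ with $v_0$ periodic, you invoke only the non-increase of $d^s$ under forward flow (a general feature of no-focal-points geometry) together with periodicity to keep $f_\tau w$ within $O(\rho')$ of the reference orbit for all $\tau\in[\tau_0,s]$, and then uniform continuity of $\lambda_T$ gives the pointwise lower bound. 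This is a mild simplification relative to the cited argument, since no exponential rate is needed. One small expository point: rather than saying ``arbitrarily small $\epsilon$'', fix $\epsilon$ once (say $\epsilon=\bar\eta/2$) before choosing $\rho'$, since $a(\rho')$---and hence your formula for $\eta_1$---depends on $\rho'$, which in turn is chosen after $\epsilon$.
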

\begin{proof}
The proof follows, mutatis mutandis, the proof of \cite{BCFT2017}
Lemma 6.2. One only needs to replace the \cite{BCFT2017} Corollary
3.11 in their proof by Lemma \ref{lem:39}, and the last assertion
follows because for $0<\eta'\leq\eta''$, we have $\Reg_{T}(\eta'')\subset\Reg_{T}(\eta')$.
\end{proof}

\begin{prop}
\label{prop:fully_supported}The unique equilibrium state $\mu_{\varphi}$
is fully supported. \end{prop}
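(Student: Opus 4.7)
The plan is to combine the density of $\Reg$ (Proposition 4.2), Lemma \ref{lem:lamb_T_and_sing}, the "close-to-good" statement of Lemma \ref{lem:6.2}, and the lower Gibbs estimate of Lemma \ref{lem:6.1}. Given a non-empty open $U \subset T^1 S$, I would first pick $v_0 \in U \cap \Reg$ by density, then use Lemma \ref{lem:lamb_T_and_sing} to find $T \geq T_0$ with $\lambda_T(v_0) > 0$, and set $\eta := \min(\lambda_T(v_0)/2,\eta_0) > 0$ so that $v_0 \in \Reg_T(\eta)$. Choose $\rho > 0$ small enough that $B(v_0, 2\rho) \subset U$.

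The key preliminary observation is that the decomposition $(\mathcal{B}_T(\eta_1), \mathcal{G}_T(\eta_1), \mathcal{B}_T(\eta_1))$ still satisfies the Climenhaga--Thompson criterion for any $T \geq T_0$ and any $\eta_1 \in (0,\eta_0]$. Specification and the Bowen property are uniform in $(T,\eta)$ by Proposition \ref{prop: specification} and Theorems \ref{thm:Holder_Bowen}/\ref{thm:geometric-potential-Bowen}. For the pressure gap, the inequality $\lambda_T \geq \lambda_{T_0}$ gives $\mathcal{M}_{\lambda_T}(\eta_1) \subset \mathcal{M}_{\lambda_{T_0}}(\eta_0)$, so the argument of Proposition \ref{prop:pressue_gap_bad_orbits} yields $P_\mu(\varphi) < P(\mathrm{Sing},\varphi) + \delta < P(\varphi)$ for $\mu \in \mathcal{M}_{\lambda_T}(\eta_1)$, hence $P([\mathcal{B}_T(\eta_1)],\varphi) < P(\varphi)$. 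Since $\mu_\varphi$ is intrinsic to $\varphi$ and unique, Lemma \ref{lem:6.1} applied to this decomposition at scale $\rho$ produces constants $Q, \tau, M > 0$ giving the lower Gibbs bound on $\mathcal{G}_T(\eta_1)^M$.

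Finally, I would apply Lemma \ref{lem:6.2} with parameters $(\rho,\eta,T)$ to produce $\eta_1 \in (0,\eta_0]$, and then with $t = \tau$ obtain $w \in B(v_0,\rho)$ and $s \geq \tau$ with $(w,s) \in \mathcal{G}_T(\eta_1)$. With respect to the new decomposition we have $p(w,s) = s(w,s) = 0 \leq M$, so $(w,s) \in \mathcal{G}_T(\eta_1)^M$; the Gibbs bound then gives
\[
\mu_\varphi(B_s(w,\rho)) \geq Q e^{-sP(\varphi) + \int_0^s \varphi(f_\theta w)\,d\theta} > 0.
\]
Since $B_s(w,\rho) \subset B(w,\rho) \subset B(v_0,2\rho) \subset U$, this forces $\mu_\varphi(U) > 0$, proving full support.

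The main delicacy is parameter matching: the $T$ extracted from $v_0$ may exceed the distinguished $T_0$ of the working decomposition, and the $\eta_1$ returned by Lemma \ref{lem:6.2} is generally strictly smaller than $\eta_0$, so one must be willing to change decompositions and verify that Lemma \ref{lem:6.1} still delivers a Gibbs lower bound for the unique equilibrium state with respect to this new decomposition. This hinges on the flexibility already implicit in Proposition \ref{prop:pressue_gap_bad_orbits} and its remark.
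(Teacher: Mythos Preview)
Your proof is correct and follows essentially the same route as the paper: density of $\Reg$, Lemma \ref{lem:6.2} to land in some $\mathcal{G}_T(\eta_1)$, and the lower Gibbs bound from Lemma \ref{lem:6.1}. The one genuine difference is in how you arrange $v_0\in\Reg_T(\eta)$. The paper does not enlarge $T$: instead it flows $v$ to $v'=f_{t_0}v$ where $\lambda(v')>0$, which already forces $\lambda_{T_0}(v')>0$, so the argument proceeds with the fixed $T_0$ and only $\eta$ needs to shrink (exactly what the Remark after Proposition \ref{prop:pressue_gap_bad_orbits} covers); flow invariance of $\mu_\varphi$ then transports the positive measure back to $B(v,r)$. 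Your alternative of raising $T\geq T_0$ works too, and your justification via $\lambda_T\geq\lambda_{T_0}\Rightarrow\mathcal{M}_{\lambda_T}(\eta_1)\subset\mathcal{M}_{\lambda_{T_0}}(\eta_0)$ is the right way to recover the pressure gap for the new decomposition. The paper's trick is slightly cleaner in that it avoids re-verifying condition (III) for a new $T$, but your version is self-contained and does not rely on flow invariance at the end.
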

\begin{proof}
Since ${\rm Reg}$ dense in $T^{1}M$, it is enough to show that for
any $v\in{\rm Reg}$ and $r>0$ we have $\mu_{\vp}(B(v,r))>0$. 

Since $v\in\text{Reg}$, there exists $t_{0}\in\mathbb{R}$ such that
$\lambda(f_{t_{0}}v)>0$. For convenience, let's denote $v'=f_{t_{0}}v$.
By the continuity of $\lambda,$ there exists $\rho>0$ such that
$\left.\lambda\right|_{B(v',2\rho)}>\eta$ for some $\eta>0$, and
we have $v'\in{\rm Reg}_{T}(2\rho\eta)$. We make sure to pick $\rho$
small enough so that $f_{-t_{0}}B(v',2\rho)\subset B(v,r)$. By Lemma
\ref{lem:6.2}, there exists $\eta_{1}>0$ such that there is $w\in B(v',\rho)$
satisfying $(w,t)\in{\cal G}_{T}(\eta_{1})$ for arbitrary large $t$
(depending on $\rho,\eta$).  

Furthermore, the decomposition $({\cal P},{\cal G},{\cal S})=({\cal B}_{T_{0}}(\eta_{1}),{\cal G}_{T_{0}}(\eta_{1}),{\cal \mathbf{{\cal B}}}_{T_{0}}(\eta_{1}))$
verifies Theorem \ref{thm:C-T_criteria}, assuming that we take $\eta_{1}$
smaller than $\eta_{0}$. Thus by Lemma \ref{lem:6.1} we know $\mu_{\vp}$
satisfies the lower Gibbs property, i.e., 
\[
\mu_{\vp}(B(w,\rho))>0.
\]

Now, because $\mu_{\vp}$ is flow invariant, it follows that 
\[
\mu_{\vp}(B(v,r))\geq\mu_{\vp}(B(v',2\rho))\geq\mu_{\vp}(B(w,\rho))>0.
\]

\end{proof}

\subsection{periodic regular orbits are equidistributed relative to $\mu_{\vp}$}

Let us continue the discussion on ergodic properties of the equilibrium
state. Recall that $S$ is a closed surface without focal point with
genus $\geq2$, and $\vp:T^{1}S\to\R$ is a potential satisfying Theorem
A, and $\mu_{\vp}$ the equilibrium state. In what follows, the good
orbit segment collection ${\cal G}$ always refers to ${\cal G}_{T_{0}}(\eta_{0})$
where $T_{0}$, $\eta_{0}$ are given in Proposition \ref{prop:pressue_gap_bad_orbits}.

\begin{lem}
\label{lem:regular_upper_bound}Suppose $\vp:T^{1}S\to\R$ is a potential
satisfying ${\rm Theorem}\ {\rm A}$. For any $\Delta>0$, there exists
$C>0$ such that 
\[
\Lambda_{\Reg,\D}^{*}(\vp,t)\leq Ce^{tP(\vp)}
\]
for all $t>\Delta$ .
\end{lem}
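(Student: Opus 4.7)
The plan is to bound the weighted count of regular periodic orbits by the $\mu_\vp$-measure of associated Bowen balls, exploiting the lower Gibbs property given by Lemma \ref{lem:6.1}. First, for each $\gamma \in \mathrm{Per}_R(t-\Delta, t]$, I would pick a tangent vector $v_\gamma \in T^1 S$. Since $|\gamma| \in (t-\Delta, t]$ and $\vp$ is bounded on the compact $T^1 S$, the quantity $|\Phi(\gamma) - \Phi(v_\gamma, t)|$ is at most $\Delta\|\vp\|_\infty$, so it suffices to bound $\sum_\gamma e^{\Phi(v_\gamma, t)}$ by a constant times $e^{tP(\vp)}$.

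Next, I would fix a small scale $\rho > 0$ and consider the Bowen balls $\{B_t(v_\gamma, \rho)\}_\gamma$. The goal is to show these balls cover $T^1 S$ with multiplicity bounded by some $N = N(\rho)$ independent of $t$. This reduces to a separation-type claim: if two distinct regular periodic orbits $\gamma_1 \neq \gamma_2$ both have length close to $t$ and their chosen tangent vectors shadow one another at scale $\rho$ for time $t$, then by entropy-expansivity of $\mathcal{F}$ (Liu--Wang), sending $t \to \infty$ along such a family and using the Flat Strip Theorem (Proposition \ref{prop:no_focal_points}(6)), the two orbits must bound a flat strip and therefore lie in $\Sing$, contradicting regularity. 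Thus the family $\{v_\gamma\}$ is essentially $(t, \rho)$-separated, up to a uniform multiplicity factor accounting for closed-orbit reparametrizations.

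Then I would invoke Lemma \ref{lem:6.1} to obtain the lower Gibbs estimate $\mu_\vp(B_t(v_\gamma, \rho)) \geq Q e^{-tP(\vp) + \Phi(v_\gamma, t)}$. Provided each $(v_\gamma, t)$ lies in $\mathcal{G}^M$ for some uniform $M$, summing over $\gamma$ and using that $\mu_\vp$ is a probability measure gives
\[
\sum_\gamma e^{\Phi(v_\gamma, t) - tP(\vp)} \leq Q^{-1} \sum_\gamma \mu_\vp(B_t(v_\gamma, \rho)) \leq Q^{-1} N,
\]
which combined with the first step yields the desired bound.

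The principal obstacle will be the last step, namely verifying that the representatives $(v_\gamma, t)$ admit an orbit decomposition with bad prefix and suffix of uniformly bounded length $M$, so that the lower Gibbs property applies. Not every regular periodic orbit comes with a uniform lower bound on the $\lambda_{T_0}$-average along its period, so I expect to combine Lemma \ref{lem:6.2} (which produces a good segment of any prescribed length near a vector in $\Reg_T(\eta)$) with the closing lemma (Corollary \ref{cor: closing_lemma_regular}) to either shift $v_\gamma$ along its orbit or replace it by a nearby good segment. The subtle interaction between these two directions---the closing lemma builds a regular periodic orbit from a good segment, whereas here we need the converse---is where most of the technical work will concentrate.
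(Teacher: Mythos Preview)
Your separation argument is essentially correct and matches the paper's: two distinct regular closed geodesics of length in $(t-\Delta,t]$ that stay $\rho$-close for time $t$ (with $\rho$ below the injectivity radius) are freely homotopic, hence their lifts are bi-asymptotic, hence by the Flat Strip Theorem they are singular---contradiction. So $\{v_\gamma\}$ is genuinely $(t,\rho)$-separated, not merely up to bounded multiplicity. (Your appeal to entropy-expansivity here is superfluous.)

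The genuine gap is in the second half. The lower Gibbs bound of Lemma~\ref{lem:6.1} is stated only for $(v,t)\in\mathcal{G}^M$, and there is no reason a regular periodic orbit segment $(v_\gamma,t)$ lies in $\mathcal{G}^M$ for any uniform $M$: a regular closed geodesic can spend an arbitrarily large fraction of its period with small $\lambda_{T_0}$-average. Your proposed workaround via Lemma~\ref{lem:6.2} and the closing lemma indeed runs in the wrong direction (those tools produce periodic orbits \emph{from} good segments, not vice versa) and does not salvage the argument.

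The paper avoids this entirely. Once $\{v_\gamma\}$ is $(t,\delta)$-separated, the definition of the partition function gives immediately
\[
\sum_{\gamma}e^{\Phi(v_\gamma,t)}\leq \Lambda(\vp,\delta,t),
\]
and then one invokes the general upper bound $\Lambda(\vp,\delta,t)\leq Ce^{tP(\vp)}$ from \cite[Lemma~4.11]{Climenhaga:2016ut}, which holds under the hypotheses of Theorem~\ref{thm:C-T_criteria} without any restriction to $\mathcal{G}^M$. No Gibbs property is needed. In short: drop the Gibbs detour and use the partition-function upper bound directly.
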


\begin{proof}
\textbf{Claim:} for all $\Delta>0$ and $\delta<{\rm inj}(S)$, ${\rm Per}_{R}(t-\Delta,t]$
is a $(t,\delta)$-separated set. 

pf. If not, suppose $\g_{1},\g_{2}$ are two closed geodesics in ${\rm Per}_{R}(t-\Delta,t]$
such that $d(\g_{1}(s),\g_{2}(s))\leq\delta$ for all $s\in[0,t]$,
then because $\delta<{\rm inj}(S)$ using the exponential map one
can construct a homotopy between $\g_{1}$ and $\g_{2}$. Since $\g_{1},\g_{2}$
are in the same free homotopy class, their lifts $\widetilde{\g}_{1}$,$\widetilde{\g}_{2}$
are bi-asymptotic. Thus by the Flat Strip Theorem (Proposition \ref{prop:no_focal_points})
$\widetilde{\g}_{1}$ and $\widetilde{\g}_{2}$ bound a flat strip,
and hence they are singular. This contradicts to $\g_{1},\g_{2}\in{\rm Per}_{R}(t-\Delta,t]$. 

Notice that for every $\g\in{\rm Per}_{R}(t-\Delta,t]$, let $v_{\g}$
be a vector tangent to $\g$, we have
\[
|\Phi(\g)-\Phi(v_{\g},t)|\leq\Delta||\vp||,\ {\rm and\ thus\ }\Lambda_{\Reg,\D}^{*}(\vp,t)\leq e^{\Delta||\vp||}\Lambda(\vp,\delta,t).
\]

Lastly, by \cite[Lemma 4.11]{Climenhaga:2016ut}, there exists $C>0$
such that for $t>\Delta$ we have 
\[
\Lambda_{\Reg,\D}^{*}(\vp,t)\leq e^{\Delta||\vp||}\Lambda(\vp,\delta,t)<Ce^{tP(\vp)}.
\]

\end{proof}

\begin{lem}
\label{lem:regular_lower_bound} Suppose $\vp:T^{1}S\to\R$ is a potential
satisfying ${\rm Theorem}\ {\rm A}$. There exists $\Delta,C>0$ such
that 
\[
\frac{C}{t}e^{tP(\vp)}\leq\Lambda_{\Reg,\D}^{*}(\vp,t)
\]
for all large $t$ .
\end{lem}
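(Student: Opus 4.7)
The plan is to transfer the lower bound $\Lambda(\mathcal{G}^M,\vp,\delta,t)\geq C_0 e^{tP(\vp)}$ supplied by Lemma \ref{lem:6.1} onto the weighted count of regular closed geodesics, by applying the closing lemma (Corollary \ref{cor: closing_lemma_regular}) orbit-by-orbit, controlling the overcounting via $(t,\delta)$-separation, and converting the integrals along orbit segments to integrals along closed geodesics via the Bowen property.

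First I fix $M>0$, $\delta>0$, and $C_0>0$ as in Lemma \ref{lem:6.1}, shrinking $\delta$ if necessary so that the closing lemma produces regular periodic points in Bowen balls of radius $\ep_0<\delta/3$. For each sufficiently large $t$, choose a $(t,\delta)$-separated set $E_t\subset\mathcal{G}^M_t$ realizing $\sum_{v\in E_t}e^{\Phi(v,t)}\geq C_0 e^{tP(\vp)}$. For every $(v,t)\in E_t$, the decomposition yields a middle segment $(f_pv,g)\in\mathcal{G}_{T_0}(\eta_0)$ with $p,s\leq M$. The integrated lower bound in the definition of $\mathcal{G}_{T_0}(\eta_0)$ applied at $\tau=1$ forces the existence of $\theta_1,\theta_2\in[0,1]$ with $f_{p+\theta_1}v$ and $f_{t-s-\theta_2}v$ both in $\Reg_{T_0}(\eta_0)$, so the truncated segment $(f_{p+\theta_1}v,t')$ with $t'\in[t-2M-2,t]$ lies in $\mathcal{C}_{T_0}(\eta_0)$.

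Next, I apply Corollary \ref{cor: closing_lemma_regular} to this truncated segment to obtain a regular closed geodesic $\gamma_v$ of period $\ell_v\in[t',t'+s(\ep_0)]$ whose orbit $\ep_0$-shadows $(f_{p+\theta_1}v,t')$. The Bowen property on $\mathcal{G}_{T_0}(\eta_0)$ from Theorems \ref{thm:Holder_Bowen} and \ref{thm:geometric-potential-Bowen} yields, after absorbing the bounded-length prefix, suffix, and endpoint shifts using $\|\vp\|_\infty$, the uniform bound $|\Phi(\gamma_v)-\Phi(v,t)|\leq\tilde K$. The decisive estimate is the multiplicity of the map $v\mapsto\gamma_v$: if $v_1,v_2\in E_t$ produce the same closed geodesic $\gamma$, then the corresponding closing-lemma outputs $w_{v_1},w_{v_2}$ lie on the orbit of $\gamma$, and since arc length along an orbit of a unit-speed flow dominates the Knieper distance, two points on $\gamma$ at arc-length less than $\delta/3$ apart stay within $\delta/3$ under every iterate; combined with $\ep_0<\delta/3$, this violates $(t,\delta)$-separation. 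Thus the distinct shadows of $E_t$ on $\gamma$ are arc-length separated by $\delta/3$, and at most $3\ell_v/\delta=O(t)$ of them fit on $\gamma$. Putting everything together,
\[
\sum_{\gamma\in\{\gamma_v\}}e^{\Phi(\gamma)}\geq\frac{e^{-\tilde K}}{O(t)}\sum_{v\in E_t}e^{\Phi(v,t)}\geq\frac{C'}{t}e^{tP(\vp)},
\]
and the $\gamma_v$'s have periods in a window of fixed length $A=2M+s(\ep_0)+2$. Choosing $\Delta\geq A$ and shifting the base time by at most $A$ yields the claimed bound $\Lambda_{\Reg,\Delta}^*(\vp,t)\geq\tfrac{C}{t}e^{tP(\vp)}$ for all sufficiently large $t$.

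The main obstacle I anticipate is the multiplicity argument: one must ensure that the $(t,\delta)$-separation of $E_t$ is not washed out by passing through the closing lemma, which is handled by choosing $\ep_0<\delta/3$ and exploiting the fact that arc length along a single orbit of the unit-speed geodesic flow is preserved. A secondary technical subtlety is that the Bowen property only controls the good middle of $(v,t)$, so the short prefix, suffix, and endpoint shifts, each of uniformly bounded length, must be discarded and absorbed into the additive constant $\tilde K$.
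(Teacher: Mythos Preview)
Your approach is the paper's: pull the lower bound from $\mathcal{G}^M$ via Lemma~\ref{lem:6.1}, close each segment to a regular periodic orbit via Corollary~\ref{cor: closing_lemma_regular}, transfer weights with the Bowen property, and bound the overcount per closed orbit by $O(t)$ using $(t,\delta)$-separation. Two remarks. First, the detour through $\theta_1,\theta_2$ is unnecessary: letting $\tau\to0^+$ in the defining inequality of $\mathcal{G}_{T_0}(\eta_0)$ already gives $\lambda_{T_0}(f_pv)\geq\eta_0$, so $\mathcal{G}_{T_0}(\eta_0)\subset\mathcal{C}_{T_0}(\eta_0)$ and the closing lemma applies directly to the middle segment. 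Second, and more important, your multiplicity argument has a small gap: the closing output $w_{v_i}$ shadows $f_{p_i+\theta_i}v_i$, not $v_i$, so arc-length proximity of $w_{v_1},w_{v_2}$ on $\gamma$ only forces $d_K(f_\sigma v_1,f_{\sigma+c}v_2)<\delta$ for a bounded shift $c=(p_2+\theta_2)-(p_1+\theta_1)$, which does \emph{not} contradict $(t,\delta)$-separation of $v_1,v_2$. The paper handles this by first pulling back to $f_{-p}w$ and using uniform continuity of $f_\sigma$ on $[-M,M]$ to obtain $f_{-p}w\in B_t(v,\ep)$; you should insert the same step before running the separation count.
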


\begin{proof}
By Lemma \ref{lem:6.1}, we know when $M$ is big, there exists $C_{1},\delta_{1}>0$
such that for all $t>0$ 
\[
C_{1}e^{tP(\vp)}\leq\Lambda({\cal G}^{M},\delta_{1},t).
\]
Hence, it suffices to find find $\delta$, $C_{2},\Delta,s>0$ with
$\delta<\delta_{1}$ such that for any $t>\max\{s,\Delta,2M\}$, we
have 
\[
\Lambda({\cal G}^{M},\delta,t)\leq C_{2}(t+s)\Lambda_{\Reg,\D}^{*}(\vp,t+s).
\]

Indeed, the lemmas follows from these inequalities because 
\[
\Lambda_{\Reg,\Delta}^{*}(\vp,t+s)\geq\frac{C_{1}C_{2}^{-1}}{t+s}e^{tP(\vp)}=\frac{C_{1}C_{2}^{-1}e^{-sP(\vp)}}{t+s}e^{(t+s)P(\vp)}.
\]

We start from labeling sizes of Bowen balls relative to different
propositions. In what follows, we fix $T_{0},\eta_{0}>0$ and $M$
large so that Theorem A and Lemma \ref{lem:6.1} hold. Let $\ep_{1}=\ep_{1}(T_{0},\eta_{0})$
be given in Corollary \ref{cor: closing_lemma_regular}. Since $\vp$
verifies the Bowen property on ${\cal G}^{M}$, let $\ep_{2}=\ep_{2}(T_{0},\eta_{0})$
denote the radius of Bowen balls for the Bowen property. Lastly, because
$S$ is compact and $f_{t}$ is uniformly continuous, for any $\ep>0$,
there exists $\delta_{1}=\delta_{1}(\ep)$ such that when $d_{K}(u,w)<\delta_{1}$
we have $d_{K}(f_{\sigma}u,f_{\sigma}w)<\ep$ for any $\sigma\in[-M,M]$,
without loss of generality, we may choose $\ep<\min\{\ep_{1},\ep_{2}\}$. 

The fist step is to associate each $(v,t)\in{\cal G}^{M}$ with a
regular closed orbit whose length is in the interval $[t-t_{1},t+t_{2}]$
for some $t_{1}$ and $t_{2}$. Recall that for each $(v,t)\in{\cal G}^{M}$
there exists $0<s_{0},p_{0}<M$ such that $(f_{p_{0}}v,t-s_{0}-p_{0})=(v',t')\in{\cal G}$. 

We claim that given $\ep>0$ as above and $\delta_{2}=\min\{\ep,\delta_{1}(\ep)\}$,
there exists $s=s(\delta_{2})$ such that for any $(v',t')\in{\cal G}^{M}$
defined as above, there exists a regular vector $w\in B_{t'}(v',\delta_{2})$
with $f_{t'+\tau}(w)=w$ for some $\tau\in[0,s].$

Indeed, the claim is a direct consequence of Corollary \ref{cor: closing_lemma_regular},
because $(v',t')\in{\cal G\subset}{\cal C}_{T_{0}}(\eta_{0}).$ Moreover,
we also have $f_{-p}w\in B_{t}(v,\ep)$ because $w\in B(v',\delta_{2})\subset B(v',\delta_{1})$
and the choice of $\delta_{1}$. Thus, we have the claim.

Moreover, since $\ep<\ep_{2}$ we have 
\begin{alignat*}{1}
\left|\Phi(v,t)-\Phi(w,t'+\tau)\right| & =\left|\int_{0}^{t}\vp(f_{\sigma}v)d\sigma-\int_{0}^{t'+\tau}\vp(f_{\sigma}w)d\sigma\right|\\
 & \leq\left|\int_{0}^{p_{0}}\vp(f_{\sigma}v)d\sigma+\int_{0}^{t'}\vp(f_{\sigma}v')d\sigma+\int_{0}^{s_{0}}\vp(f_{\sigma+t'}v)d\sigma-\int_{0}^{t'+\tau}\vp(f_{\sigma}w)d\sigma\right|\\
 & \leq(2M+\tau)||\vp||+\left|\int_{0}^{t'}(\vp(f_{\sigma}v')-\vp(f_{\sigma}w))d\sigma\right|,\\
 & \leq(2M+\tau)||\vp||+K
\end{alignat*}
where $K$ is the constant given by the Bowen property.

In sum, given $\ep>0$ as above, we can define a map $\Psi:{\cal G}^{M}\ni(v,t)\mapsto(w,t'+\tau)$
where $w$ is tangent to a regular closed orbit $\g_{w}\in{\rm Per}_{R}(t',t'+\tau]\subset{\rm Per}_{R}(t-2M,t+s]$
and $|\Phi(v,t)-\Phi(\g_{w})|\leq(2M+s)||\vp||+K$. 

We notice that $\left.\Psi\right|_{E_{t}}$ is an injection for every
$(t,\delta)$-separated set $E_{t}\subset{\cal G}^{M}$ provided $\delta>3\ep$
(because for every $(v,t)\in E_{t}$, its image $\Psi(v,t)=(w,t'+\tau)$
satisfies $w\in B_{t}(v,\ep)$). Moreover, because $\Psi(E_{t})$
is $(t,\ep)$-separated, each $\g\in{\rm Per}_{R}(t-2M,t+s]$ has
at most $\frac{t+s}{\ep}$ elements of $\Psi(E_{t})$ tangent to it. 

Hence, for $\delta>3\ep$ and for all $(t,\delta)$-separated set
$E_{t}\subset{\cal G}^{M}$ we have 
\[
\sum_{(v,t)\in E_{t}}e^{\Phi(v,t)}\leq\frac{t+s}{\ep}\cdot e^{(2M+s)||\vp||+K}\cdot\sum_{\g\in{\rm Per}_{R}[t-2M,t+s]}e^{\Phi(\g)}.
\]

The lemma now follows with by setting $C_{2}=e^{(2M+s)||\vp||+K}/\ep$
and $\Delta=2M+s$.
\end{proof}

From the above two lemmas, we can conclude:

\begin{prop}
\label{prop:weight_average_regular_orbits}The unique equilibrium
state $\mu_{\varphi}$ obtained in ${\rm Theorem\ A}$ is a weak$^{*}$
limit of the weighted regular periodic orbits. More precisely, there
exists $\D>0$ such that 
\[
\mu_{\vp}=\lim_{T\to\infty}\frac{\sum_{\g\in{\rm Per}_{R}(T-\D,T]}e^{\Phi(\g)}\delta_{\g}}{\Lambda_{\Reg,\Delta}^{*}(\vp,T)}
\]
 \end{prop}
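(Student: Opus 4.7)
The plan is to derive the proposition directly from Proposition \ref{prop:pressure_equal_equidistribution}, once we upgrade the two estimates in Lemmas \ref{lem:regular_upper_bound} and \ref{lem:regular_lower_bound} to the exact pressure identity $P_{\Reg,\D}^{*}(\vp) = P(\vp)$. The strategy is therefore: (i) verify this pressure identity, (ii) invoke the Walters-type criterion of Proposition \ref{prop:walters_eq_states} to show that any weak$^{*}$ accumulation point of the weighted averages is an equilibrium state, and (iii) use the uniqueness supplied by Theorem A to upgrade subsequential accumulation to full weak$^{*}$ convergence.

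For step (i), applying $\tfrac{1}{T}\log$ to the upper bound of Lemma \ref{lem:regular_upper_bound} and sending $T \to \infty$ immediately gives $\overline{P}_{\Reg,\D}^{*}(\vp) \leq P(\vp)$ for every $\D > 0$. Applying $\tfrac{1}{T}\log$ to the lower bound of Lemma \ref{lem:regular_lower_bound}, namely $\Lambda_{\Reg,\D}^{*}(\vp, T) \geq \tfrac{C}{T} e^{T P(\vp)}$, yields
\[
\underline{P}_{\Reg,\D}^{*}(\vp) \;\geq\; P(\vp) + \liminf_{T\to\infty} \tfrac{\log C - \log T}{T} \;=\; P(\vp)
\]
for the specific $\D > 0$ produced by that lemma. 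Combining the two bounds shows that both the upper and lower regular Gurevich pressures equal $P(\vp)$, so $P_{\Reg,\D}^{*}(\vp)$ exists in the strong sense of the definition and agrees with the topological pressure.

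For steps (ii) and (iii), fix this value of $\D$ and set
\[
\mu_T := \frac{\sum_{\g \in {\rm Per}_R(T-\D, T]} e^{\Phi(\g)} \delta_\g}{\Lambda_{\Reg,\D}^{*}(\vp, T)}.
\]
Since the space of Borel probability measures on $T^{1}S$ is weak$^{*}$ compact, every subsequence of $\{\mu_T\}$ has a further subsequence $\{\mu_{T_k}\}$ converging weak$^{*}$ to some probability measure $\mu_{\infty}$. The pressure identity established in step (i) verifies the hypotheses of Proposition \ref{prop:walters_eq_states} along the sequence $\{T_k\}$, so $\mu_{\infty}$ is an equilibrium state for $\vp$. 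Theorem A guarantees that $\vp$ has a unique equilibrium state $\mu_\vp$, which forces $\mu_{\infty} = \mu_\vp$. Since every weak$^{*}$ accumulation point of $\{\mu_T\}$ is $\mu_\vp$, the full family $\{\mu_T\}$ itself must converge weak$^{*}$ to $\mu_\vp$, proving the proposition.

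There is no essential obstacle to this argument; the delicate analytic work has already been done inside Lemmas \ref{lem:regular_upper_bound} and \ref{lem:regular_lower_bound} (the former relying on the flat strip theorem to turn ${\rm Per}_R(t-\D,t]$ into a separated set, the latter using the closing lemma and the Bowen property on $\G^M$ to push a lower bound for $\Lambda(\G^M,\delta,t)$ across to regular periodic orbits). The only mildly subtle bookkeeping is that the lower bound is only established for a specific $\D$, which is exactly why the proposition is phrased in the existential form "there exists $\D > 0$".
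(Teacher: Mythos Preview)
Your proof is correct and follows essentially the same approach as the paper, which simply cites Lemmas \ref{lem:regular_upper_bound} and \ref{lem:regular_lower_bound} together with Proposition \ref{prop:pressure_equal_equidistribution}. You have merely unpacked the latter proposition by writing out the compactness and uniqueness argument explicitly via Proposition \ref{prop:walters_eq_states}, which is exactly how Proposition \ref{prop:pressure_equal_equidistribution} is meant to be read.
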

\begin{proof}
It follows immediately from Lemma \ref{lem:regular_upper_bound},
Lemma \ref{lem:regular_lower_bound}, and Proposition \ref{prop:pressure_equal_equidistribution}.
\end{proof}

\section{The proof of Theorem C and examples\label{sec:Thm_C_exampels}}

In this section, we present the proof of Theorem C and also provide
examples satisfying the pressure gap property. The following lemmas
show that the scalar multiple $q\vp^{u}$ of the geometric potential
possesses the pressure gap property provided $q<1$.

\begin{lem}
If $S$ is a closed surface of genus greater than or equal to 2 without
focal points, then $P(q\varphi^{u})>0=P(\Sing,q\varphi^{u})$ for
each $q\in(-\infty,1)$; in particular, $h_{{\rm top}}(\Sing)=0$.\end{lem}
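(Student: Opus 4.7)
The plan is to establish the two assertions $P(\mathrm{Sing},q\varphi^u)=0$ and $P(q\varphi^u)>0$ independently: the former by computing $\varphi^u$ on singular orbits, the latter by testing the variational principle against the Liouville measure.

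First I would show that $\varphi^u\equiv 0$ on $\mathrm{Sing}$. If $v\in\mathrm{Sing}$ then Lemma \ref{lem:3.3} gives $k^u(f_t v)=0$ for every $t\in\mathbb{R}$, so Proposition \ref{prop:k^u_and_Coddazi_eq} forces every unstable Jacobi field along $\gamma_v$ to be parallel. For any $\xi\in E^u(v)$, equation \eqref{eq:metric_jacobi_unit_tangent_bundle} then reads $\|df_t\xi\|^2=\|J_\xi(t)\|^2+\|J_\xi'(t)\|^2=\|J_\xi(0)\|^2=\|\xi\|^2$, and since $E^u$ is one-dimensional this gives $\det(df_t|_{E^u(v)})\equiv 1$, hence $\varphi^u(v)=0$. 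Consequently $q\varphi^u$ vanishes identically on the closed flow-invariant set $\mathrm{Sing}$, and the variational principle applied to $\mathcal{F}|_{\mathrm{Sing}}$ gives $P(\mathrm{Sing},q\varphi^u)=h_{\mathrm{top}}(\mathcal{F}|_{\mathrm{Sing}})$.

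To see that $h_{\mathrm{top}}(\mathrm{Sing})=0$, I would invoke the Flat Strip Theorem (Proposition \ref{prop:no_focal_points}(5)): each singular geodesic in the universal cover $\widetilde{S}$ bounds a maximal flat strip, and in the two-dimensional compact setting these descend to a finite disjoint union of embedded flat cylinders in $S$ bounded by closed singular geodesics. The geodesic flow on $T^1$ of each such cylinder is smoothly conjugate to a constant-speed translation flow on a two-torus, whose topological entropy vanishes; taking the union across the finitely many cylinders yields $h_{\mathrm{top}}(\mathrm{Sing})=0$. Alternatively one may directly cite the corresponding zero-entropy theorem of Gelfert-Ruggiero.

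For the strict positivity $P(q\varphi^u)>0$ when $q<1$, I would test the variational principle against the normalized Liouville measure $\mu_L$, which is smooth and $\mathcal{F}$-invariant. Pesin's entropy formula applied to the $C^\infty$ time-one map $f_1$ yields $h_{\mu_L}(\mathcal{F})=\int\chi^+\,d\mu_L$, and since $E^u$ is one-dimensional, for every Lyapunov regular $v$ one has $\chi^+(v)=\lim_{t\to\infty}\tfrac{1}{t}\log\|df_t|_{E^u(v)}\|=-\lim_{t\to\infty}\tfrac{1}{t}\int_0^t\varphi^u(f_\tau v)\,d\tau$, so by Birkhoff $\chi^+(\mu_L)=-\int\varphi^u\,d\mu_L$. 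By Proposition \ref{prop:no_focal_points}(7), $S$ has a rank-$1$ vector, so $\mathrm{Reg}$ is a non-empty open invariant set and hence $\mu_L(\mathrm{Reg})>0$; therefore $\chi^+(\mu_L)>0$. The variational principle then delivers
\[
P(q\varphi^u)\geq h_{\mu_L}(\mathcal{F})+q\int\varphi^u\,d\mu_L=(1-q)\,\chi^+(\mu_L)>0
\]
for every $q<1$, completing the plan. The principal obstacle is the zero-entropy assertion $h_{\mathrm{top}}(\mathrm{Sing})=0$: although the flat strip picture makes it geometrically transparent, a rigorous treatment in the no-focal-points (as opposed to nonpositively curved) setting requires care in describing the component structure of $\pi(\mathrm{Sing})\subset S$. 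The remaining steps are routine given the pointwise computation $\varphi^u|_{\mathrm{Sing}}\equiv 0$ and Pesin's formula for $\mu_L$.
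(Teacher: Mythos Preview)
Your argument for $P(q\varphi^u)>0$ via the Liouville measure and Pesin's formula matches the paper's proof essentially line for line, and your pointwise computation $\varphi^u|_{\mathrm{Sing}}\equiv 0$ is in fact more explicit than what the paper writes down. The divergence is entirely in how you obtain $h_{\mathrm{top}}(\mathrm{Sing})=0$.

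The paper never touches the global geometry of $\mathrm{Sing}$ for this step. Instead it argues measure by measure via Ruelle's inequality: for any $\mu\in\mathcal{M}(\mathrm{Sing})$ one has $h_\mu(\mathcal{F})\le\int\chi^+\,d\mu$, and since $\chi^+$ vanishes identically on $\mathrm{Sing}$ (Remark~\ref{rem:LE=00003D0_on_Sing}, or equivalently your computation $\varphi^u|_{\mathrm{Sing}}=0$ combined with Birkhoff), the right-hand side is zero. Hence $h_\mu(\mathcal{F})=0$ for every invariant measure supported on $\mathrm{Sing}$, and the variational principle gives $P(\mathrm{Sing},q\varphi^u)=h_{\mathrm{top}}(\mathrm{Sing})=0$ in one line.

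Your flat-strip route, by contrast, requires a structural description of $\mathrm{Sing}$ as the unit tangent bundle over a finite union of embedded flat cylinders. The Flat Strip Theorem in Proposition~\ref{prop:no_focal_points} takes as input \emph{two} bi-asymptotic geodesics, not a single singular one, so the assertion that ``each singular geodesic bounds a maximal flat strip'' is not what that theorem delivers; and the further claims about finiteness and embeddedness of the resulting cylinders are not established anywhere in the paper and would need substantial additional work in the no-focal-points setting --- exactly the obstacle you flag yourself. The Ruelle-inequality argument sidesteps all of this geometry, uses only the vanishing of $\chi^+$ on $\mathrm{Sing}$ that you have already proved, and should replace your geometric argument.
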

\begin{proof}
It is a classical result proved by Burns \cite[Theorem, p.6]{Burns:1983dw}
that $\mu_{L}({\rm Reg})>0$ where $\mu_{L}$ is the Liouville measure.
Thus by Lemma \ref{lem:0LE_implies_Sing} and Remark \ref{rem:ergodic_Reg>0_hyperbolic}
we get 

\[
0<\chi(\mu_{L}):=\int_{T^{1}S}\chi(v)d\mu_{L}.
\]
This follows because if $\chi(\mu_{L})=0$, then $\chi(v)=0$ for
$\mu_{L}-a.e.$ $v\in T^{1}S$, and hence, by Lemma \ref{lem:0LE_implies_Sing},
we would have ${\rm supp}(\mu_{L})\subset\Sing$ contradicting $\mu_{L}(\Reg)>0$

Therefore, we know 

\[
0<\chi(\mu_{L})=\chi^{+}(\mu_{L}):=\int_{T^{1}S}\chi^{+}(v)d\mu_{L}=-\int_{T^{1}S}\varphi^{u}d\mu_{L},
\]
where the last equality follows from the Birkhoff ergodic theorem. 

Moreover, by Pesin's entropy formula, we have 
\[
h_{\mu_{L}}({\cal F})=\int_{T^{1}S}\chi^{+}(v)d\mu_{L}.
\]

Thus for $q\in(-\infty,1)$ 
\[
P(q\varphi^{u})\geq h_{\mu_{L}}({\cal F})+\int q\varphi^{u}d\mu_{L}=(q-1)\int\varphi^{u}d\mu_{L}>0.
\]

We claim that $P({\rm Sing},q\vp^{u})=0$. Indeed, for any $\mu\in{\cal M}(\Sing)$,
$P_{\mu}(q\vp^{u}):=h_{\mu}({\cal F})+q\int_{T^{1}S}\vp^{u}d\mu=h_{\mu}({\cal F})+q\int_{\Sing}\vp^{u}d\mu=h_{\mu}({\cal F}).$

By Ruelle's inequality we have $h_{\mu}({\cal F})\leq\int\chi^{+}(v)d\mu=0$
(because $\left.\chi\right|_{\Sing}=0$, see Remark \ref{rem:LE=00003D0_on_Sing}).
Therefore, $P(\Sing,q\vp^{u})=\sup_{\mu\in{\cal M}(\Sing)}P_{\mu}(q\vp^{u})=0$. 
\end{proof}

Now, we are ready to prove Theorem C.

\begin{thm*}
[Theorem C]Suppose $S$ is a closed surface of genus greater than
or equal to 2 without focal points, then the geodesic flow has a unique
equilibrium state $\mu_{q}$ for the potential $q\varphi^{u}$ for
$q<1$. This equilibrium state $\mu_{q}$ satisfies $\mu_{q}(\text{Reg})=1$,
is fully supported, Bernoulli, and is the weak{*} limit of weighted
regular periodic orbits. Furthermore, the map $q\mapsto P(q\vphi^{u})$
is $C^{1}$ for $q<1$, and $P(q\vp^{u})=0$ for $q\geq1$ when $\Sing\neq\emptyset$. 
\end{thm*}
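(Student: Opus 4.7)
The plan is to deduce Theorem~C from the preceding lemma together with Theorems~A and~B. The lemma supplies the pressure gap $P(\Sing, q\varphi^u) = 0 < P(q\varphi^u)$ for every $q < 1$, and Theorem~\ref{thm:geometric-potential-Bowen} shows that $\varphi^u$ (hence $q\varphi^u$) has the Bowen property on $\gt(\eta)$. So Theorem~A gives the unique equilibrium state $\mu_q$ and Theorem~B immediately yields the remaining ergodic conclusions: $\mu_q(\Reg) = 1$, full support, Bernoulliness, and equidistribution by weighted regular periodic orbits. This handles the first half of the statement.

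For $q \geq 1$ with $\Sing \neq \emptyset$, I would show $P(q\varphi^u) = 0$ by two inequalities. The key observation is $\varphi^u \leq 0$, which follows from the no-focal-points assumption, since $\|J^u(t)\|$ being non-decreasing forces $\det(df_t|_{E^u(v)}) \geq 1$ for $t \geq 0$. Ruelle's inequality together with the identity $\int \chi^+ d\mu = -\int \varphi^u d\mu$ (cf.\ the argument in the preceding lemma) then gives, for any ergodic $\mathcal{F}$-invariant $\mu$,
\[
h_\mu(\mathcal{F}) + q \int \varphi^u d\mu \;\leq\; -\int \varphi^u d\mu + q \int \varphi^u d\mu \;=\; (q-1) \int \varphi^u d\mu \;\leq\; 0,
\]
so the variational principle yields $P(q\varphi^u) \leq 0$. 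The reverse inequality comes from picking any invariant probability measure $\nu$ supported on the closed nonempty $\mathcal{F}$-invariant set $\Sing$: the preceding lemma gives $h_\nu \leq h_{\mathrm{top}}(\Sing) = 0$, and $\varphi^u \equiv 0$ on $\Sing$, so $P_\nu(q\varphi^u) = 0$.

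The $C^1$ regularity of $q \mapsto P(q\varphi^u)$ on $(-\infty, 1)$ I would obtain from the standard convex-analysis dictionary between equilibrium states and tangent functionals. The map $q \mapsto P(q\varphi^u)$ is convex, and its one-sided derivatives at any $q$ are the quantities $\int \varphi^u d\mu$ taken over the extremal tangent equilibrium states for $q\varphi^u$. Uniqueness from the first half forces the left- and right-derivatives to coincide at every $q < 1$, so the pressure function is differentiable there with $\frac{d}{dq} P(q\varphi^u) = \int \varphi^u d\mu_q$. The anticipated obstacle is upgrading pointwise differentiability to $C^1$: this requires continuity of $q \mapsto \int \varphi^u d\mu_q$, which I would obtain by first showing $q \mapsto \mu_q$ is weak-$*$ continuous on $(-\infty, 1)$. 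For this, upper semi-continuity of the entropy map (which follows from entropy-expansivity \cite{Liu:2016de}) guarantees that any weak-$*$ accumulation point of $\mu_{q_n}$ as $q_n \to q < 1$ is itself an equilibrium state for $q\varphi^u$, and uniqueness of $\mu_q$ forces the accumulation point to equal $\mu_q$, completing the proof.
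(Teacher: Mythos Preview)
Your argument is correct and tracks the paper's proof closely: the preceding lemma gives the pressure gap, Theorems~A and~B give the equilibrium state and its properties, and the two-sided bound on $P(q\varphi^u)$ for $q\geq 1$ via Ruelle's inequality and a measure supported on $\Sing$ is exactly the paper's computation.

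One small inaccuracy: you justify $(q-1)\int\varphi^u\,d\mu\leq 0$ by asserting $\varphi^u\leq 0$ pointwise, arguing that $\|J^u(t)\|$ non-decreasing forces $\det(df_t|_{E^u})\geq 1$. But $\|df_t\xi\|^2=\|J^u_\xi(t)\|^2\big(1+k^u(f_tv)^2\big)$, and the factor $1+k^u(f_tv)^2$ can decrease along the orbit, so the pointwise sign is not immediate. What you actually need, and what the paper uses, is only the integrated statement $\int\varphi^u\,d\mu=-\int\chi^+\,d\mu\leq 0$ for invariant $\mu$, which follows from Remark~\ref{rem:ergodic_Reg>0_hyperbolic}.

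For the $C^1$ claim the paper simply quotes Walters \cite{Walters:1992et}: under entropy-expansivity, $q\mapsto P(q\varphi^u)$ is $C^1$ wherever the equilibrium state is unique. Your convex-analysis route is a valid self-contained substitute, and in fact the weak-$*$ continuity step is not even needed: once you have differentiability at every $q<1$ from uniqueness of the tangent functional, convexity alone forces the derivative to be monotone and hence continuous.
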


\begin{proof}
By the above lemma, Theorem A, and Theorem B, it remains to show $q\mapsto P(q\vp^{u})$
is $C^{1}$ for $q<1$ and $P(q\vp^{u})=0$ for $q\geq1$ when $\Sing\neq\emptyset$.
We first notice that when $\Sing\neq\emptyset$, we have $P(q\vp^{u})\geq0$.
It is because for any invariant measure $\mu$ such that with ${\rm supp}(\mu)\subset\Sing$,
we have 
\[
h_{\mu}({\cal F})+\int_{T^{1}S}\vp^{u}d\mu=h_{\mu}({\cal F})+\int_{\Sing}\vp^{u}d\mu\geq0.
\]

Moreover, the positive Lyapunov exponent $\chi^{+}$ is the Birkhoff
average of $-\vp^{u}$; thus together with Ruelle's inequality we
have for any invariant measure $\nu\in{\cal M}({\cal F})$: 
\[
h_{\nu}({\cal F})\leq\int_{T^{1}S}\chi^{+}(v)d\nu
\]
and for $q\geq1$
\begin{alignat*}{1}
h_{\nu}({\cal F})+\int\vp^{u}d\nu & =\underset{\leq0}{\underbrace{h_{\nu}({\cal F})-\int_{T^{1}S}\chi^{+}(v)d\nu}}\\
 & \geq h_{\nu}({\cal F})-q\int_{T^{1}S}\chi^{+}(v)d\nu\\
 & =h_{\nu}({\cal F})+q\int_{T^{1}S}\vp^{u}d\nu
\end{alignat*}

Therefore, we have for $q\geq1$ 
\[
P(q\vp^{u})=\sup\{h_{\nu}({\cal F})+q\int_{T^{1}S}\vp^{u}d\nu:\ {\cal \nu\in{\cal M}}({\cal F})\}\leq0;
\]
hence we have $P(q\vp^{u})=0$ for $q\geq1$.

Lastly, Liu-Wang \cite{Liu:2016de} proved that the geodesic flow
is entropy expansive for manifolds without conjugates points. So by
Walters \cite{Walters:1992et}, we know that $q\mapsto P(q\vp^{u})$
is $C^{1}$ at where $q\vp^{u}$ has a unique equilibrium state. In
particular, we know $q\mapsto P(q\vp^{u})$ is $C^{1}$ for $q<1$. 
\end{proof}

The proposition below gives us an easy criteria for the pressure gap
property.

\begin{prop}
\cite[Lemma 9.1]{BCFT2017} Let $S$ be a closed surface of genus
greater than or equal to 2 without focal points and $\varphi:T^{1}S\to\mathbb{R}$
continuous. If 
\[
\sup_{v\in{\rm Sing}}\varphi(v)-\inf_{v\in T^{1}S}\varphi(v)<h_{{\rm top}}(\mathcal{F}),
\]
 then $P({\rm Sing},\vp)<P(\varphi)$. In particular, constant functions
have the pressure gap property.\end{prop}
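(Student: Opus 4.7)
The plan is to separate the desired pressure gap $P(\mathrm{Sing},\varphi)<P(\varphi)$ into an upper bound on the left side and a lower bound on the right side, both obtained from the variational principle, and then combine them using the fact established in the unnumbered lemma immediately preceding Theorem~C that $h_{\mathrm{top}}(\mathrm{Sing})=0$. This is the workhorse that turns the purely geometric assumption $\sup_{\mathrm{Sing}}\varphi-\inf\varphi<h_{\mathrm{top}}(\mathcal{F})$ into a statement about topological pressures.

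First I would upper bound $P(\mathrm{Sing},\varphi)$. Since $\mathrm{Sing}$ is closed and $\mathcal{F}$-invariant, the restricted variational principle gives
\[
P(\mathrm{Sing},\varphi)=\sup_{\mu\in\mathcal{M}(\mathrm{Sing})}\left(h_{\mu}(\mathcal{F})+\int\varphi\,d\mu\right).
\]
For each such $\mu$ one has $h_{\mu}(\mathcal{F})\leq h_{\mathrm{top}}(\mathrm{Sing})=0$ and $\int\varphi\,d\mu\leq\sup_{\mathrm{Sing}}\varphi$, so $P(\mathrm{Sing},\varphi)\leq\sup_{v\in\mathrm{Sing}}\varphi(v)$. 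Next I would lower bound $P(\varphi)$. By entropy expansivity of the geodesic flow \cite{Liu:2016de}, the entropy map is upper semi-continuous, hence a measure of maximum entropy $\mu_{0}$ exists; then by the variational principle
\[
P(\varphi)\geq h_{\mu_{0}}(\mathcal{F})+\int\varphi\,d\mu_{0}\geq h_{\mathrm{top}}(\mathcal{F})+\inf_{v\in T^{1}S}\varphi(v).
\]
Subtracting the two bounds gives
\[
P(\varphi)-P(\mathrm{Sing},\varphi)\geq h_{\mathrm{top}}(\mathcal{F})+\inf_{T^{1}S}\varphi-\sup_{\mathrm{Sing}}\varphi>0,
\]
which is precisely the hypothesis rearranged.

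The constant-function case then follows because for $\varphi\equiv c$ we have $\sup_{\mathrm{Sing}}\varphi-\inf\varphi=0$, while $h_{\mathrm{top}}(\mathcal{F})>0$ since $S$ has genus $\geq2$, hence is rank~1 by Proposition~\ref{prop:no_focal_points}(7), and the measure of maximum entropy has positive entropy (e.g.\ by the same Liouville/Pesin argument used in the lemma preceding Theorem~C to show $P(q\varphi^{u})>0$). There is essentially no deep obstacle here; the only delicate point is ensuring $h_{\mathrm{top}}(\mathrm{Sing})=0$ is available (it was proved in that lemma as a byproduct of Ruelle's inequality together with $\chi|_{\mathrm{Sing}}\equiv0$), and that a maximum-entropy measure realizing the lower bound for $P(\varphi)$ actually exists, which is supplied by the entropy expansivity of $\mathcal{F}$.
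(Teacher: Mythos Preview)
Your proof is correct and follows essentially the same approach as the paper: both bound $P(\mathrm{Sing},\varphi)\leq h_{\mathrm{top}}(\mathrm{Sing})+\sup_{\mathrm{Sing}}\varphi$ and $P(\varphi)\geq h_{\mathrm{top}}(\mathcal{F})+\inf\varphi$, then use $h_{\mathrm{top}}(\mathrm{Sing})=0$ and the hypothesis. One small remark: you do not actually need the existence of a measure of maximal entropy for the lower bound, since $P(\varphi)=\sup_{\mu}(h_{\mu}+\int\varphi\,d\mu)\geq\sup_{\mu}h_{\mu}+\inf\varphi=h_{\mathrm{top}}(\mathcal{F})+\inf\varphi$ follows directly from the variational principle as a supremum.
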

\begin{proof}
The proof follows from the variational principle. More precisely, 

\begin{alignat*}{1}
\sup_{v\in{\rm Sing}}\varphi(v)-\inf_{v\in T^{1}S}\varphi(v) & <h_{{\rm top}}(\mathcal{F})-\underset{=0}{\underbrace{h_{{\rm top}}({\rm Sing})}}\\
\iff\sup_{v\in{\rm Sing}}\varphi(v)+h_{{\rm top}}(\Sing) & <h_{{\rm top}}(\mathcal{F})+\inf_{v\in T^{1}S}\varphi(v)
\end{alignat*}
and 
\[
P(\Sing,\vp)\leq h_{{\rm top}}(\Sing)+\sup_{v\in{\rm Sing}}\varphi(v)<h_{{\rm top}}(\mathcal{F})+\inf_{v\in T^{1}S}\varphi(v)\leq P(\vp).
\]

\end{proof}

By the above proposition, the following class of potentials also possesses
the pressure gap property.

\begin{cor}
Let $S$ be a closed surface of genus greater than or equal to 2 without
focal points and $\varphi:T^{1}S\to\mathbb{R}$ continuous. If $\left.\vp\right|_{\Sing}=0$
and $\vp\geq0$, then $P(\Sing,\vp)<P(\vp)$.
\end{cor}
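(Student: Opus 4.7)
My plan is to deduce this corollary directly from the preceding proposition by verifying its hypothesis for potentials $\vp$ satisfying $\vp \geq 0$ and $\vp|_{\Sing} = 0$. That proposition provides the pressure gap whenever
\[
\sup_{v \in \Sing} \vp(v) - \inf_{v \in T^{1}S} \vp(v) < h_{{\rm top}}(\F),
\]
so the whole task reduces to checking this inequality for the given class of potentials.

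First I would dispose of the degenerate case $\Sing = \emptyset$: in that regime $\F$ is Anosov, so either $P(\Sing,\vp) = -\infty$ by convention (pressure over an empty invariant set) or the claim is classical, and the conclusion holds trivially. Assume henceforth that $\Sing \neq \emptyset$.

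Next I would evaluate the two extrema. Since $\vp|_{\Sing} = 0$ and $\Sing$ is nonempty, $\sup_{v \in \Sing} \vp(v) = 0$. Since $\vp \geq 0$ on $T^{1}S$, we have $\inf_{v \in T^{1}S} \vp(v) \geq 0$; but this infimum is attained at any vector in $\Sing$, so $\inf \vp = 0$. Consequently $\sup_{\Sing}\vp - \inf_{T^{1}S}\vp = 0$.

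It remains to verify $h_{{\rm top}}(\F) > 0$. Since $S$ has genus at least $2$, $S$ is rank $1$ by Proposition \ref{prop:no_focal_points}(7); and the first lemma of Section \ref{sec:Thm_C_exampels} gives $P(q\vp^{u}) > 0$ for every $q < 1$. Specializing to $q = 0$ yields $h_{{\rm top}}(\F) = P(0) > 0$, so the hypothesis of the preceding proposition is satisfied and the desired inequality $P(\Sing, \vp) < P(\vp)$ follows. There is no substantive obstacle; the corollary is essentially a repackaging of the preceding proposition for the natural class of nonnegative potentials that vanish on the singular set.
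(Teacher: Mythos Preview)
Your proposal is correct and follows exactly the approach the paper intends: the paper merely states that the corollary follows ``by the above proposition'' without writing out the details, and you have supplied precisely those details by verifying that $\sup_{\Sing}\vp - \inf_{T^{1}S}\vp = 0 < h_{\rm top}(\F)$. Your justification that $h_{\rm top}(\F)>0$ via the $q=0$ case of the earlier lemma is the natural one in this paper's framework.
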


\bibliographystyle{amsalpha}
\bibliography{/Users/nyima/Dropbox/TEX/Bibtex/BIB}

\end{document}